\definecolor{codegreen}{rgb}{0,0.6,0}
\definecolor{codegray}{rgb}{0.5,0.5,0.5}
\definecolor{codepurple}{rgb}{0.58,0,0.82}
\definecolor{backcolour}{rgb}{0.95,0.95,0.92}
\lstdefinestyle{mystyle}{
    xleftmargin={2.5em},
    backgroundcolor=\color{backcolour},
    commentstyle=\color{codegreen},
    keywordstyle=\color{magenta},
    numberstyle=\tiny\color{codegray},
    stringstyle=\color{codepurple},
    basicstyle=\ttfamily\footnotesize,
    breakatwhitespace=false,
    breaklines=true,
    captionpos=b,
    keepspaces=true,
    numbers=left,
    numbersep=5pt,
    showspaces=false,
    showstringspaces=false,
    showtabs=false,
    tabsize=2
}
\newtheorem{thm}{Theorem}
\numberwithin{thm}{section}
\newtheorem{lem}[thm]{Lemma}
\newtheorem{cnj}[thm]{Conjecture}
\newtheorem{guess}[thm]{Guess}
\newtheorem{cor}[thm]{Corollary}
\theoremstyle{definition}
\newtheorem{conv}[thm]{Convention}
\newtheorem{exm}[thm]{Example}
\newtheorem{defn}[thm]{Definition}
\newtheorem{notn}[thm]{Notation}
\newtheorem{rmk}[thm]{Remark}
\newtheorem{warn}[thm]{Warning}
\newtheorem{asmptn}[thm]{Assumption}
\newcommand{\bb}{\mathbb}
\newcommand{\s}{\mathscr}
\newcommand{\mc}{\mathcal}
\newcommand{\bC}{\bb{C}}
\newcommand{\sC}{\s{C}}
\newcommand{\wh}{\widehat}
\newcommand{\br}[1]{\mleft( #1 \mright)}
\newcommand{\set}[2][]{
  \ifthenelse{\equal{#1}{}}{
    \mleft\{ #2 \mright\}
  }{
    \mleft\{ #1\ :\ #2 \mright\}
  }
}
\renewcommand{\to}[1][]{
  \ifthenelse{\equal{#1}{}}{
    \longrightarrow
  }{
    \stackrel{#1}{\longrightarrow}
  }
}
\newcommand{\To}[1][]{
  \ifthenelse{\equal{#1}{}}{
    \Longrightarrow
  }{
    \stackrel{#1}{\Longrightarrow}
  }
}
\renewcommand{\mapsto}[1][]{
    \ifthenelse{\equal{#1}{}}{
      \longmapsto
    }{
      \stackrel{#1}{\longmapsto}
    }
}
\newcommand{\ot}[1][]{
  \ifthenelse{\equal{#1}{}}{
  \longleftarrow
  }{
    \stackrel{#1}{\longleftarrow}
  }
}
\newcommand{\tr}{\text{tr}\xspace}
\newcommand{\hto}[1][]{\stackrel{#1}{\longhookrightarrow}}
\newcommand{\id}{\mathrm{id}}
\renewcommand{\Ref}[2][]{\ifthenelse{\equal{#1}{}}{\ref{#2}}
                      {\hyperref[#2]{\ref*{#1}(\ref*{#2})}}}
\newcommand{\Aref}[2][]{\ifthenelse{\equal{#1}{}}{\autoref{#2}}
                      {\hyperref[#2]{\autoref*{#1}\ref*{#2}}}}
\newcommand{\Sref}[1]{\hyperref[#1]{\S \ref*{#1}}}
\newcommand{\dom}{\mathrm{dom}}
\newcommand{\Set}{\mathrm{Set}}
\newcommand{\Grpd}{\mathrm{Grpd}}
\newcommand{\op}{\mathrm{op}}
\newcommand{\Fun}{\mathrm{Fun}}
\newcommand{\Hom}{\mathrm{Hom}}
\newcommand{\HHom}{\mathcal{H}\mathrm{om}}
\newcommand{\Map}{\mathrm{Map}}
\DeclareMathOperator{\colim}{\mathrm{colim}}
\newcommand{\Sc}[1][]{\ifthenelse{\equal{#1}{}}{\mathrm{Sch}}{\mathrm{Sch}_{/#1}}}
\newcommand{\Spec}{\mathrm{Spec}}
\newcommand{\Cat}{\s{C}\mathrm{at}}
\newcommand{\Catinf}{\Cat_\infty}
\newcommand{\sSet}{\mathrm{sSet}}
\newcommand{\QCoh}{\mathrm{QCoh}}
\newcommand{\Vect}{\mathrm{Vect}}
\newcommand{\Sym}{\mathrm{Sym}\xspace}
\newcommand{\St}{\mathrm{St}}
\newcommand{\AlgSt}{\mathrm{AlgSt}}
\newcommand{\CRing}{\mathrm{CRing}}
\newcommand{\CAlg}{\mathrm{CAlg}}
\newcommand{\unstr}{\mathrm{unstr}}
\newcommand{\Stab}{\mathrm{Stab}}
\title[Moduli Stacks of Quiver Bundles]
      {Moduli Stacks of Quiver Bundles with Applications to Higgs Bundles}
\author{Mahmud Azam}
\email{mahmud.azam@usask.ca}
\author{Steven Rayan}
\email{rayan@math.usask.ca}
\address{Centre for Quantum Topology and Its Applications (quanTA) and Department of Mathematics and Statistics, University of Saskatchewan, SK, Canada~ S7N 5E6}
\begin{document}

\begin{abstract}
We provide a general method for constructing moduli stacks
whose points are diagrams of vector bundles over a fixed base,
indexed by a fixed simplicial set --- that is, quiver bundles of a
fixed shape. We discuss some constraints on
the base for these moduli stacks to be Artin and observe that
a large class of interesting schemes satisfy these constraints. Using this construction,
we recover Nakajima quiver varieties and provide an alternate construction for moduli stacks of Higgs
bundles along with a proof of algebraicity following readily
from the algebraicity of moduli stacks of quiver bundles.
One feature of our approach is that, for each of the moduli stacks
we discuss, there are moduli stacks that are Artin, parametrizing morphisms of
the objects being classified. We discuss some potential applications of this
in categorifying non-abelian Hodge theory in a sense we will make precise.
We also discuss potential applications of our methods and perspectives to the
subjects of quiver varieties, abstract moduli theory, and homotopy theory.
\end{abstract}

\maketitle

\tableofcontents

\section{Introduction}
\label{sec:Intro}

Solving moduli problems is one of the modern techniques for constructing new and interesting spaces. Moduli spaces tend to be very nice objects from a geometric or topological point of view, but the imposition of a stability condition comes at the cost of obscuring some otherwise useful features --- for example, over a Riemann surface or complex curve, the moduli space of Higgs bundles is only birationally equivalent to the cotangent bundle of the moduli space of stable bundles.  On the other hand, moduli stacks are more flexible and tend to have more obvious versions of features like the aforementioned one (so that the moduli stack of Higgs bundles is exactly the tangent bundle to the stack of bundles), but depending on how they are defined they may be cumbersome to interact with. 

In this article, we wish to define, in a relatively straightforward way, a moduli stack whose points are diagrams,
which may or may not commute, of vector bundles over a fixed stack.
Such a moduli stack will be called a \emph{moduli stack of quiver bundles}.  The homological algebra of quiver bundles was developed in \cite{GK:05}, and moduli spaces of these objects were considered very explicitly for low-genus curves in \cite{RS:18,RS:21} using a stability condition induced by viewing these as Higgs bundles of a particular sub-diagonal form, while these were constructed in far greater generality in \cite{AS:05,AS:08,AS:13,AS:17,AS:20,AS:22}, just for example.
Some of the main motivations, in very general terms, of this programme are:
\begin{enumerate}[label=(\roman*), itemsep=0pt]
\item to categorify moduli problems involving vector bundles in a suitable
sense to be expanded upon --- see \cref{sec:ModStQuiver} and \cref{rmk:meaning};
\item to construct and study a categorified stack of Higgs bundles in this sense;
\item to enlargen some existing connections between quiver bundles and Higgs bundles, building on the relationship where the former, at least in Dynkin type $A$, appear as fixed points of an algebraic $\mathbb C^\times$-action on the moduli space of the latter (cf. for example \cite{R18});
\item to recover a notion of mapping space for quiver bundles, in particular
for vector bundles themselves and Higgs bundles, and examine any connections
thereof with homotopy theory;
\item to explore some potential connections between abstract moduli theory and
homotopy theory;
\end{enumerate}
The main goal of this paper is to initiate some minimal foundation for such
investigations.

To accomplish this in as ``model-agnostic'' a way
as possible, we set up a minimal context for defining moduli stacks of quiver
bundles in \cref{sec:Context}, which we will then specialize and build on to study examples
of interest. In doing so, we prioritize a simplicial approach over existing approaches to moduli stacks of, say, Higgs bundles, e.g. \cite[Corollary 2.6(2)]{PTVV13}.
In \cref{sec:ModStVec}, we construct a moduli stack parametrizing triples $(E, F, f)$
of two vector bundles $E, F$ over a fixed base and a vector bundle map $f : E \to F$.
We use this stack to then construct moduli stacks parametrizing diagrams of vector bundles
--- that is, quiver bundles ---
over a fixed base indexed by a fixed simplicial set in \cref{sec:ModStQuiver}. In this section,
we also establish the algebraicity, by which we mean the property of being an
Artin stack, of the stacks constructed in the case of
a somewhat general base $1$--stack over the fppf site of commutative rings and
a finite indexing simplicial set. From this,
we recover the algebraicity of such stacks when the base is a projective variety over a field.
In \cref{sec:QuiverVarieties}, we sketch how to recover quiver varieities and
Nakajima quiver varieties from our framework, the precise details of which we leave
for future work.
In \cref{sec:HiggsBundles}, we provide a construction of a moduli stack of Higgs
bundles, under the conjecture that the moduli stacks of vector bundles and triples admit
suitable morphisms of stacks sending vector bundles and their maps to the respective tensor products.
We observe that the algebraicity of this moduli stack of Higgs bundles is immediate from
the moduli stacks of quiver bundles. Here, we also speculate about a version of non-Abelian Hodge theory involving a stack parametrizing morphisms of Higgs bundles. In \cref{sec:ModHmtpy}, we discuss some potential links between moduli theory and homotopy theory inspired by the perspectives in this work.

\subsubsection*{Conventions}
\label{subsubsec:Conventions}

We will follow the notation and conventions of \cites{HTT, HA, SpAG} for the most part.
In particular, by infinity category we will mean quasicategory and by a space or animum,
we will mean a Kan complex. $\CRing$ will stand for $E_\infty$--algebras in the
quasicategory of spectra. $\CRing^{\heartsuit}$ will stand for the category of
commutative rings which is also the heart of $\CRing$. $\CRing^\Delta$ will stand for
animated rings.
A difference here will be that we will use $\sSet$ to denote the quasicategory
of simplicial sets.
We will use $pr_i : \prod_{k = 1}^n X_k \to X_i$ to denote projections of products
and $\pi_i : \lim_k X_k \to X_i$ to denote projections or structure maps of limits.
We will use $\Delta : X \to X \times X$ to denote diagonal maps. A potential source of
confusion will be our use of $\Delta$ to also denote the simplicial indexing category
but hopefully, this confusion will be easily dispelled by context.
For a quasicategory $\s{C}$, we will write $\Hom_\s{C}(A, B)$ for the Kan complex of
maps $A \to B$ and $\HHom_{\s{C}}(A, B)$ for internal Hom's. We will use the term
``algebraic stack'' to denote Artin $1$--stacks over the fppf site of affine schemes
over a base ring.

Given a category $\s{C}$ and a space or animum $I$,
we will write $\underline{I}$ for the constant presheaf $\s{C}^\op \to \s{S}$
with value $I$ --- that is, the presheaf
$c \mapsto I, \forall c \in \s{C}$.
Whenever we mention limits or colimits, we mean limits in the respective quasicategories
--- for instance, in the case of nerves of $1$--categories, these will be usual limits
and colimits; for $(2, 1)$--categories such as those of $1$--stacks, we will mean
strong $2$--limits and $2$--colimits (that is, $(2, 1)$--limits and
--colimits); and for quasicategories, we will mean the fully
$\infty$--categorical limits and colimits such as those of sheaves of spaces over
an $\infty$--categorical site or $\infty$--stacks. Another potential source of
confusion is the word ``pullback'' which could mean (pre)sheaf pullbacks or fibre products.
However, it will be clear from context which we mean.

Finally, we will make use of the language of internal categories or
category objects in $(2, 1)$--categories --- mainly in the $(2, 1)$--category
of $1$--stacks over some site of affine schemes.
That is, we will consider some ambient $2$--category
$\s{D}$ and an object in $\s{D}$ of ``objects''
$\s{C}_0$ and an object in $\s{D}$ of ``morphisms'' $\s{C}_1$, equipped with
maps $s, t : \s{C}_1 \to \s{C}_0$ to be thought of as sending morphisms
to their sources and targets, a map $e : \s{C}_0 \to \s{C}_1$ to be
thought of as sending an object to its identity morphism, and
a morphism $c : \s{C}_1 \times_{\s{C}_0} \s{C}_1 \to \s{C}_1$,
where the pullback is of the diagram
$\s{C}_1 \to[s] \s{C}_0 \ot[t] \s{C}_0$, to be thought of as sending
a composeable pair of morphisms to their composite. These maps
satisfy diagrams that express the associativity and unitality of
composition. Of course, these are diagrams in a $(2, 1)$--category.\\

\noindent\textbf{Acknowledgements.} The authors thank Kuntal Banerjee, Daniel Halpern-Leistner, Matthew Koban, Kobi Kremnizer, Carlos Simpson, Motohico Mulase, and Qixiang Wang for very useful discussions.  We acknowledge Dat Minh Ha for valuable discussions around the formulation of Theorem \eqref{thm:algebraicity}. The authors are especially grateful to Toni Annala for hosting the first-named author for a crucial research stay at the Institute for Advanced Study in March 2024 and for many discussions after that time, leading to a number of crucial developments around the parametrization of vector bundle triples in this work.  We are also indebted to Antoine Bourget for hosting the first-named author for a two-month research stay during the months of October and November 2024 at CEA Saclay, during which the major part of the section on quiver varieties was completed, and for many discussions about quiver varieties. We are pleased to acknowledge that this stay was funded by a High Level Scientific Fellowship (HLSF) for graduate students from Campus France and the Embassy of France in Canada. Formative moments towards the completion of the work occurred during the Arithmetic Quantum Field Theory Conference at Harvard in March 2024, in which the first-named author participated through a generous conference fellowship from the Arithmetic Quantum Field Theory Program; the Workshop on Advances in Higgs Bundles at the Brin Mathematics Research Centre in Maryland in April 2024 in which the second-named author was a speaker and participant; the 2nd Simons Math Summer Workshop, convened around the topic of ``Moduli'' at the Simons Center for Geometry and Physics in July 2024, in which the first-named author was a participant in residence, once again with generous student funding; and finally the Workshop on Hamiltonian Geometry and Quantization at the Fields Institute in July 2024, in which the second-named author was a speaker and participant. The authors thank the various organizing teams of these gatherings for creating such stimulating research environments. The second-named author was supported during the development of this work by a Natural Sciences and Engineering Research Council of Canada (NSERC) Discovery Grant, a Canadian Tri-Agency New Frontiers in Research Fund (NFRF) Exploration Stream Grant, and a Pacific Institute for the Mathematical Sciences (PIMS) Collaborative Research Group Award, for which this manuscript is PIMS report PIMS-20240716-CRG34. The first-named author was funded by a University Saskatchewan Dean's Scholarship, an NSERC Canada Graduate Scholarship (Doctoral), and the second-named author's Discovery Grant.  We gratefully acknowledge these funding sources.

\section{Context}
\label{sec:Context}

Let $\s{C}$ be a quasicategory to be thought of as the quasicategory
of local models for a geometric theory --- for instance, affine schemes
$\s{C} = \CAlg^{\heartsuit, \op}$, or spectral affine schemes
$\s{C} = \CAlg^\op$, or derived affine schemes
$\s{C} = \CAlg^{\Delta, \op}$.
We assume $\s{C}$ is complete and cocomplete. In particular, it has
fibre products, or dually, tensor products of commutative algebras.
Let $\tau$ be a subcanonical topology on $\s{C}$. Consider sheaves
of spaces or anima on $\s{C}$ with respect to $\tau$ and denote the
quasicategory of these objects $\St_{\s{C}, \tau}$, omitting $\tau$ from
notation when there is no confusion --- these are to be thought of as
stacks, spectral stacks or derived stacks.
Notice that we do not yet concern ourselves with any kind of algebraicity.
We also adopt a notational convention for convenience: we will write
$\St_{\s{C}}^n$ for the the full subcategory of $n$--truncated objects
--- that is, sheaves of $(n, 1)$--groupoids on $\s{C}$. We will take
$n = \infty$ to mean $\St_{\s{C}}^\infty = \St_\s{C}$.
We will mostly be concerned with $n = 1$ in this paper,
but will make our definitions and constructions for $n = \infty$
whenever possible, as we intend to eventually extend our work in this paper
to most available models of geometry including derived algebraic geometry,
spectral algebraic geometry, and derived analytic geometry with modifications
as necessary.

We will then consider the following assumptions on
$\St_{\s{C}}^n$ for all $n$:

\begin{asmptn}
For each $X \in \St^n_\s{C}$, there exists a symmetric monoidal stable
quasicategory $\QCoh(X)$ equipped with a $t$-structure and admitting
small colimits.
We further assume that $\QCoh(X)$ is symmetric monoidal with
the product preserving colimits in each variable --- that is,
the mondoidal product is bilinear. We will always assume a choice of
unit object for the monoidal product and denote it $\mc{O}_X$.
As the notation suggests, $\QCoh(X)$ is to be
thought of as the quasicategory of chain complexes of quasicoherent
sheaves on $X$ --- this has a bilinear monoidal product by
\cite[Proposition 2.1.0.3, Proposition 2.2.4.1, Proposition 2.2.4.2]{SpAG}.
\end{asmptn}

\begin{asmptn}
$\QCoh(X)$ has a full subcategory $\Vect(X)$ closed
under the monoidal product and containing the unit of the
monoidal product, and hence inheriting
a symmetric monoidal structure from $\QCoh(X)$. Of course, $\Vect(X)$ is to be
thought of as the full subcategory of finite locally free sheaves.
\end{asmptn}

\begin{notn}
In the case that $\s{C} = \CRing^{\heartsuit, \op}_{/R}, n = 1$ for some ring $R$,
we will also write
$\QCoh(X)$ and $\Vect(X)$ to denote the full-subcategories of $1$--truncated
objects --- that is, the usual categories of quasicoherent sheaves or finite locally
free sheaves respectively.
\end{notn}

\begin{asmptn}
$\Vect(X)$ is closed under coproducts in $\QCoh(X)$, and these
coproducts coincide with products in $\Vect(X)$. We will denote the product/coproduct by
$- \oplus -$.
\end{asmptn}

\begin{asmptn}
There is a functor $(-)^\vee : \Vect(X) \to \Vect(X)$ such
$E^\vee \otimes F \simeq \HHom_{\Vect(X)}(E, F)$, the internal $\Hom$,
for each $E, F \in \Vect(X)$
and we have a closed monoidal structure on $\Vect(X)$ given by the adjunction
$- \otimes E \dashv E^\vee \otimes - \simeq \HHom_{\Vect(X)}(E, -)$.
\end{asmptn}

\begin{asmptn}
The assignment $\QCoh$ extends to a sheaf of quasicategories:
\[\begin{array}{ccccc}
\QCoh &:& \s{C}^\op &\to& \Catinf \\
      &:& Y &\mapsto& \QCoh(Y)\\
      &:& (f : Y \to Z) &\mapsto& (f^* : \QCoh(Z) \to \QCoh(Y))
\end{array}\]
which restricts to a sheaf of categories:
\[\begin{array}{ccccc}
\Vect &:& \s{C}^\op &\to& \Catinf \\
      &:& Y &\mapsto& \Vect(Y)\\
      &:& (f : Y \to Z) &\mapsto& (f^* : \Vect(Z) \to \Vect(Y))
\end{array}\]
We will write $\QCoh$ and $\Vect$ to also
denote the left Kan extensions of the above functors along the Yoneda
embedding $\s{C} \hto \St^n_\s{C}$.
\end{asmptn}

\begin{asmptn}
For any $f : X \to Y$, the pullback functor $f^*$ is symmetric monoidal and
preserves duality: $f^*(E^\vee) \simeq (f^*E)^\vee$ for each
$E \in \Vect(X)$. 
\end{asmptn}

\begin{asmptn}
For each $F\in \Vect(X)$, there is a map
$\tr : F \otimes F^\vee \to \mc{O}_X$ such that for each
$E, G \in \Vect(X)$, the map
\[
\id \otimes \tr \otimes \id : E^\vee \otimes F \otimes F^\vee \otimes G
\to E^\vee \otimes \mc{O}_X \otimes G \simeq E^\vee \otimes G
\]
is the internal composition map of
$\Vect(X)$ for its closed monoidal structure.
\end{asmptn}


\begin{asmptn}
There exists a functor
$\s{V}_X : \QCoh(X) \to \St^n_{\s{C}/X}$ such that
$\Vect(X) \hto \QCoh(X) \to[\mc{V}_X] \St^n_{\s{C}/X}$ is
faithful.
In addition, the functor $\mc{V}_X$ restricted to $\Vect(X)$ is functorial in $X$,
in the sense that for every morphism $f : X \to Y$,
the functor $f^* : \Vect(Y) \to \Vect(X)$
makes the following diagram of quasicategories commute:
\[\begin{tikzcd}
\Vect(Y) \ar[r, "f^*"] \ar[d, "\s{V}_Y" left] &
\Vect(X) \ar[d, "\s{V}_X"] \\
\St^n_{\s{C}/Y} \ar[r, "f^*" below] & \St^n_{\s{C}/X}
\end{tikzcd}\]
where the bottom horizontal arrow is given by taking pullbacks along $f$.
This is to be thought of as sending a
quasicoherent sheaf to the relative spectrum of its symmetric algebra.
For any $Y, Z \in \St_{\s{C}/X}^n$ with
$Y \simeq \mc{V}_X(E), Z \simeq \mc{V}_X(F)$ for $E, F \in \Vect(X)$, we will
define $Y \otimes Z := \mc{V}_X(E \otimes F), Y^\vee := \mc{V}_X(E^\vee)$.
\end{asmptn}

\begin{defn}
We define $\mc{M}_{\Vect(X)}$ as the mapping stack
$\Map(X, \Vect)$ so that maps $f : U \to \mc{M}_{\Vect(X)}$ are
maps $U \times X \to \Vect$ and hence correspond to objects of
the unstraightening of $\Vect$ over $U \times X$. In the concrete examples,
this would make $\mc{M}_{\Vect(X)}$ the moduli stack of vector bundles
over $X$ --- see \cite[Definition 10.7 and Construction 9.1]{AK22} for
details --- in particular for
$\St_{\CRing^{\heartsuit, \op}}^1$, $\mc{M}_{\Vect}$ is precisely
the moduli stack of vector bundles over $X$.
\end{defn}

\begin{asmptn}
For each $X \in \St^n_\s{C}$, there exists a map
$p_X : \mc{E}_X \to \mc{M}_{\Vect(X)} \times X$, such that for every
$U \in \St^n_\s{C}$ and every $E \in \Vect(U \times X)$,
there exists a map $f_E : U \to \mc{M}_{\Vect(X)}$ and an equivalence:
\[
(f_E \times \id_X)^*\mc{E}_X \simeq \s{V}_X(E)
\]
This is to be thought of as $\mc{M}_{\Vect(X)}$ being a fine moduli stack of
vector bundles over $X$ with a universal
vector bundle $\mc{E}_X$. We will call them as such.
\end{asmptn}

\begin{conv}
In the case that $\s{C} = \CRing^{\heartsuit, \op}, n = 1$, we will take
$\mc{E}_X$ to mean universal vector bundle
\end{conv}

\begin{thm}
The $(2, 1)$--category $\St_{\CRing^{\heartsuit, \op}, \mathrm{fppf}}^1$
of $1$--stacks on the site of commutative rings with the fppf topology
satisfies the above assumptions.
\end{thm}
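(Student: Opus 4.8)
The plan is to produce, for $\s{C} = \CRing^{\heartsuit,\op}$ and $n = 1$, each item of structure named in the above assumptions from the standard constructions of algebraic geometry over the fppf site, and then verify the axioms by invoking the relevant foundational facts; almost nothing needs to be invented, only assembled. One first records that $\CRing^{\heartsuit}$ is complete and cocomplete and that the fppf topology is subcanonical, so that the framework of \cref{sec:Context} applies and $\St^{1}_{\s{C},\mathrm{fppf}}$ is indeed a $(2,1)$--category of $1$--stacks. For $X \in \St^{1}_{\s{C}}$ I would take $\QCoh(X) := \lim_{\Spec R \to X}\QCoh(\Spec R)$, the limit over the affine site of $X$ of the derived $\infty$--categories of discrete commutative rings, with $f^{*}$ the derived base change; since each $f^{*}$ is a left adjoint, this limit is presentable and stable and inherits a $t$--structure and a symmetric monoidal structure whose tensor preserves colimits in each variable, with unit $\mc{O}_{X}$, by \cite{HA,SpAG}. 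This is precisely the Kan extension of $\Spec R \mapsto \QCoh(\Spec R)$ along the Yoneda embedding demanded by the assumptions, and it yields the first of them. Inside $\QCoh(X)$ I let $\Vect(X)$ be the full subcategory of finite locally free sheaves --- discrete, dualizable, of finite rank on each affine --- which contains $\mc{O}_{X}$ and is closed under $\otimes$, giving the second assumption, and which over a fixed ground ring is the usual category of vector bundles as in the Notation. The assumption that $\QCoh$ and $\Vect$ form sheaves of ($\infty$--)categories is faithfully flat descent --- \cite{SpAG} for $\QCoh$, and classical fppf descent of finite locally free modules for $\Vect$ --- while the assumption on $f^{*}$ is the statement that derived base change is symmetric monoidal, hence preserves dualizable objects and their duals and restricts to a monoidal pullback on $\Vect$.

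The three ``fibrewise linear algebra'' assumptions are then formal consequences of working in a closed symmetric monoidal stable $\infty$--category in which vector bundles are dualizable. Additivity of $\QCoh(X)$ makes finite coproducts into biproducts preserved by $\Vect(X) \hookrightarrow \QCoh(X)$, so $E \oplus F$ is at once product and coproduct in $\Vect(X)$. For the duality assumption I would set $E^{\vee} := \HHom_{\QCoh(X)}(E,\mc{O}_{X})$; dualizability of objects of $\Vect(X)$ gives $E^{\vee} \in \Vect(X)$ and $\HHom(E,F) \simeq E^{\vee} \otimes F$, and the evaluation and coevaluation of the duality witness the adjunction $- \otimes E \dashv E^{\vee} \otimes -$. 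The trace assumption is then the standard coherence identity: evaluation of the dualizable object $F$ furnishes $\tr \colon F \otimes F^{\vee} \to \mc{O}_{X}$, and, after identifying $\HHom(-,-)$ with $(-)^{\vee} \otimes (-)$, the map $\id \otimes \tr \otimes \id$ is the internal composition of the closed structure --- a string-diagram fact about closed symmetric monoidal categories using nothing special about $\s{C}$.

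It remains to assemble the universal-bundle package. For $\s{V}_{X}$ I would use relative $\Spec$ of the symmetric algebra: on a vector bundle, $\s{V}_{X}(E) := \Spec_{X}\mathrm{Sym}_{\mc{O}_{X}}(E^{\vee})$ --- the total space of $E$ --- extended to all of $\QCoh(X)$ as in \cite{SpAG}. Compatibility of relative $\Spec$ with base change gives the functoriality of $\s{V}_{X}$ in $X$, and faithfulness of $\s{V}_{X}$ on $\Vect(X)$ follows because a symmetric-algebra homomorphism is determined by its degree-one part, so that on $\Vect(X)$ the functor $\s{V}_{X}$ factors as $\Spec_{X} \circ \mathrm{Sym} \circ (-)^{\vee}$ through faithful functors; hence $Y \otimes Z := \s{V}_{X}(E \otimes F)$ and $Y^{\vee} := \s{V}_{X}(E^{\vee})$ are well posed on the essential image. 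The mapping stack $\mc{M}_{\Vect(X)} = \Map(X,\Vect)$ of the intervening Definition exists because $\St_{\s{C}}$ is an $\infty$--topos, hence cartesian closed, and is a $1$--stack because (the core groupoid of) $\Vect$ is $1$--truncated and mapping into an $n$--truncated object is $n$--truncated --- see \cite{HTT,AK22}. The final assumption is then tautological: the identity morphism of $\mc{M}_{\Vect(X)}$ classifies a universal vector bundle on $\mc{M}_{\Vect(X)} \times X$; taking $\mc{E}_{X}$ to be its total space under $\s{V}$ and $p_{X}$ the structure map, the defining property of the classifying map $f_{E}$ of a bundle $E \in \Vect(U \times X)$, together with the $X$--functoriality of $\s{V}$, yields $(f_{E} \times \id_{X})^{*}\mc{E}_{X} \simeq \s{V}_{X}(E)$.

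The part I expect to demand the most care --- rather than any of the fibrewise identities, which are formal --- is the passage from affine schemes to arbitrary $1$--stacks: the $t$--structure, the symmetric monoidal structure and the duality on $\QCoh(X)$ for a possibly non-algebraic $X$, the existence and $X$--functoriality of relative $\Spec$ over such a base, and the fact that $\Vect$ sits inside $\QCoh$ as a symmetric monoidal sub-sheaf, must all be obtained from the limit presentation and fppf descent rather than constructed by hand. I would organize the write-up so that each of these reduces to a citation from \cite{SpAG} --- for $\QCoh$ and relative $\Spec$ --- and to classical faithfully flat descent --- for $\Vect$ --- and would spell out in detail only the compatibility of the monoidal, dual and trace structures with those limit presentations.
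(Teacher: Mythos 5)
Your proposal is correct and matches the paper exactly in spirit: the paper simply omits this proof as a collection of well-known facts (fppf descent for quasicoherent and finite locally free modules, dualizability of vector bundles in the closed symmetric monoidal structure, relative $\Spec$ of symmetric algebras, Cartesian closedness of the topos of stacks, and the tautological universal bundle classified by the identity of $\Map(X,\Vect)$), and these are precisely the ingredients you assemble. The only cosmetic deviation is your convention $\s{V}_X(E) = \Spec_X\mathrm{Sym}(E^\vee)$ versus the paper's later-stated convention of taking the symmetric algebra of the sheaf itself, which does not affect the verification of the assumptions.
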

\begin{proof}
Omitted, as these are well known properties.
\end{proof}

This is the case we will be most interested in for the present work. Nevertheless,
we will make the following conjectures, to be proved in future work:

\begin{cnj}
For $n = \infty$, if we take $\s{C}$ to be the opposite of animated commutative rings
and $\St_\s{C}$ to be derived stacks, then
these assumptions are satisfied if we take
$\mc{V}_X := \underline{\Spec}_X\mathrm{L}\Sym^*(-)$.
\end{cnj}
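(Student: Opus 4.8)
The plan is to reduce every assumption to the affine case, where $\s{C}^\op = \CAlg^\Delta$ and $\QCoh\br{\Spec A} \simeq \Mod_A$ for an animated ring $A$, and then to invoke the structural results of \cite{HA, SpAG}. For the purely module-theoretic assumptions this is routine: $\Mod_A$ is a stable, presentably symmetric monoidal $\infty$--category with the Postnikov $t$--structure and all small colimits, and its tensor product is bilinear with unit $A = \mc{O}_{\Spec A}$; the objects that are flat and perfect of Tor-amplitude $0$ form the full subcategory $\Vect\br{\Spec A}$ of finite locally free sheaves, which is closed under $\otimes$, contains $A$, is closed under finite biproducts $\oplus$ (agreeing with products), and is closed under $E \mapsto E^\vee = \HHom\br{E, A}$ with $E^\vee \otimes F \simeq \HHom\br{E,F}$. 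The evaluation map of a dualizable object supplies $\tr : F \otimes F^\vee \to \mc{O}_X$, and the displayed identity in the trace assumption is exactly the standard expression of internal composition in a closed symmetric monoidal $\infty$--category in terms of evaluation, so it holds formally.

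Next I would fix $\tau$ to be the fppf (equivalently, for descent, the faithfully flat) topology on $\CAlg^{\Delta,\op}$ and verify the sheaf condition together with the symmetric monoidality and duality-preservation of pullback. The key input is Lurie's flat-descent theorem for $\QCoh$ in the derived setting \cite{SpAG}, which exhibits $\QCoh : \s{C}^\op \to \Catinf$ as a sheaf of symmetric monoidal stable $\infty$--categories; since $f^*$ is symmetric monoidal it carries dualizable objects to dualizable objects and commutes with duals, so $\Vect$ is a subsheaf and the duality-preservation assumption follows. Subcanonicity of $\tau$ is part of the same descent package. The left Kan extensions of $\QCoh$ and $\Vect$ along the Yoneda embedding then define the global functors exactly as in the classical case.

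The substantive work is the assumption defining $\s{V}_X := \underline{\Spec}_X\,\mathrm{L}\Sym^*\br{-}$. On $\Vect(X)$ the derived symmetric algebra collapses to the ordinary one, $\mathrm{L}\Sym^* E \simeq \Sym^* E$, since a finite locally free sheaf is flat and concentrated in degree $0$; thus on vector bundles $\s{V}_X$ is the classical relative-spectrum construction interpreted internally to derived stacks. Faithfulness then factors through two faithful functors: $\mathrm{L}\Sym^*$ is faithful because the unit $E \to \mathrm{L}\Sym^* E$ is the inclusion of the weight-$1$ summand, hence a split monomorphism, while $\underline{\Spec}_X$ is fully faithful onto relatively affine morphisms over $X$ by the derived affineness results of \cite{SpAG}. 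The functoriality square in $X$ reduces to the base-change equivalences $f^*\,\mathrm{L}\Sym^*\br{-} \simeq \mathrm{L}\Sym^*\,f^*\br{-}$ (the free animated-algebra monad commutes with the symmetric monoidal pullback) and $f^*\,\underline{\Spec}_Y \simeq \underline{\Spec}_X\,f^*$ (base change for relative spectra), both available in derived algebraic geometry. One point requiring care is that $\underline{\Spec}$ is most naturally defined on connective algebras, so for non-connective $\mc{F} \in \QCoh(X)$ I would interpret $\s{V}_X\br{\mc{F}}$ through its functor of points, the linear stack sending $g : T \to X$ to $\Map_{\QCoh(T)}\br{g^*\mc{F}, \mc{O}_T}$, which is defined for all $\mc{F}$ and agrees with $\underline{\Spec}_X\,\mathrm{L}\Sym^*\br{\mc{F}}$ on the connective part.

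Finally, the moduli and universal-bundle assumptions are essentially definitional once descent is in place: $\mc{M}_{\Vect(X)} = \Map\br{X, \Vect}$ is a derived stack, and $p_X : \mc{E}_X \to \mc{M}_{\Vect(X)} \times X$ is obtained by unstraightening the evaluation map $\mc{M}_{\Vect(X)} \times X \to \Vect$ adjoint to the identity, following \cite{AK22}. I expect the main obstacle to lie in the homotopy-coherent bookkeeping of the two middle paragraphs: establishing that the chosen topology is genuinely subcanonical and that $\QCoh$, $\Vect$, $\mathrm{L}\Sym^*$, and $\underline{\Spec}$ interact coherently as functors of quasicategories --- in particular promoting the functoriality-in-$X$ square to a genuine commuting diagram of quasicategories rather than an objectwise equivalence --- which is exactly the overhead that warrants stating this as a conjecture for future work.
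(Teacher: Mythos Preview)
The paper does not prove this statement: it is explicitly labeled a \emph{Conjecture} and introduced by the sentence ``we will make the following conjectures, to be proved in future work.'' There is therefore no proof in the paper against which to compare your proposal.

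That said, your sketch is a reasonable outline of what such a verification would involve, and you correctly identify in your final paragraph that the homotopy-coherent bookkeeping --- in particular, lifting the functoriality-in-$X$ square for $\s{V}_X$ to a genuine commuting square of quasicategories rather than an objectwise statement --- is precisely the non-formal content that the authors defer. One point to flag: your treatment of $\s{V}_X$ on non-connective $\mc{F}$ via the linear-stack functor of points is a sensible workaround, but the paper's assumption only requires $\s{V}_X$ to be defined on all of $\QCoh(X)$ and faithful on $\Vect(X)$, so you need not push the relative-spectrum interpretation beyond connective inputs; conversely, you should be explicit that your linear-stack definition actually lands in $\St^n_{\s{C}/X}$ and is functorial in $X$ in the required sense, which is part of the same coherence overhead. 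None of this constitutes a gap so much as a confirmation that the statement is, as the paper says, a conjecture rather than a theorem.
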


\begin{rmk}
For spectral algebraic geometry, one needs exercise caution, for the relative spectrum
of the symmetric algebra of a locally free sheaf might not be flat.
\end{rmk}

\begin{cnj}
The models of geometry in condensed mathematics such as analytic stacks
\cite{CS19},
and the models of geometry based on bornological rings \cite{BKK24}
satisfy these assumptions.
\end{cnj}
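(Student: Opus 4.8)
The plan is to verify the ten assumptions separately for each of the two frameworks, organizing them into two groups. The first group --- the assumptions furnishing $\QCoh(X)$ with its monoidal $t$-structure, the subcategory $\Vect(X)$, the duality functor, the trace, and the descent (sheaf-of-categories) property together with symmetric monoidality of pullback --- is essentially linear-algebraic and should follow by transport from the foundational results already available for the relevant module categories. The second group --- the existence of the relative-spectrum functor $\s{V}_X$ with its base-change compatibility and faithfulness on $\Vect(X)$, and the existence of a universal vector bundle over the mapping stack $\mc{M}_{\Vect(X)} = \Map(X, \Vect)$ --- is genuinely geometric and is where the real work lies. I would treat the Clausen--Scholze analytic stacks of \cite{CS19} and the bornological models of \cite{BKK24} in parallel, since the two run along structurally identical lines once the correct ``completed'' tensor product is fixed in each.

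For analytic stacks, the input is that for each analytic ring $\mathcal{A}$ the derived category $D(\mathcal{A})$ of complete modules is a presentable, stable, closed symmetric monoidal $\infty$--category whose analytic tensor product preserves colimits in each variable and which carries the natural $t$-structure inherited from the condensed (animated) structure; this yields the first assumption together with the closed-monoidal structure. I would take $\Vect(X)$ to be the full subcategory of finite locally free (equivalently, dualizable connective) analytic modules, which is closed under $\otimes$, contains $\mc{O}_X$, and is closed under finite coproducts coinciding with products; duality, its preservation under the symmetric monoidal pullback $f^*$, and the trace map $\tr : F \otimes F^\vee \to \mc{O}_X$ all follow from rigidity of dualizable objects. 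The sheaf-of-categories property is exactly the analytic descent for quasicoherent sheaves, a central result in the Clausen--Scholze formalism, and it makes $f^*$ symmetric monoidal. Finally I would define $\s{V}_X$ as the relative analytic spectrum $\underline{\Spec}^{\mathrm{an}}$ of the completed symmetric algebra of a module, recovering the total space of a vector bundle.

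For the bornological models, the same scaffolding is provided by \cite{BKK24}: the derived category of complete bornological (or ind-Banach) modules over a bornological ring is a closed symmetric monoidal stable $\infty$--category, with the completed bornological tensor product $\widehat{\otimes}$ admitting an internal hom and hence preserving colimits, and with a $t$-structure coming from the quasi-abelian structure (its left and right hearts). The homotopical-algebraic-geometry apparatus built on these categories supplies the stack theory and the descent of modules for the relevant (homotopy-Zariski type) topology, giving the sheaf-of-categories assumption and symmetric monoidality of pullback. Taking $\Vect(X)$ to be finitely generated projective bornological modules yields the duality and trace assumptions as before, and $\s{V}_X$ is the relative spectrum of the completed symmetric algebra $\widehat{\mathrm{Sym}}(-)$.

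The hard part --- and the reason this remains a conjecture to be settled in future work --- is the geometric second group in both models. Two points carry the difficulty. First, the base-change compatibility square required of $\s{V}_X$ and the faithfulness of $\Vect(X) \hookrightarrow \St^n_{\s{C}/X}$ demand that the completed symmetric algebra commute with pullback up to coherent equivalence and that the resulting affine bundle have ``enough functions'' to recover a map of vector bundles from a map of total spaces; since completion and base change need not commute in general --- precisely the flatness pathology already flagged for the symmetric-algebra construction in the spectral Remark above --- this must be verified directly for finite locally free modules rather than inferred formally. Second, and most seriously, the existence of a universal vector bundle over $\mc{M}_{\Vect(X)}$ is a fine-representability statement that, outside the classical fppf setting, amounts to an Artin-type representability theorem in analytic and bornological geometry; I expect this to require an analogue of Lurie's representability criterion adapted to these sites together with the descent already established, and it is this last assumption that I regard as the principal obstacle.
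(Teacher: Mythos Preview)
The paper provides no proof of this statement: it is explicitly labelled a \emph{Conjecture} and is listed among the ``conjectures, to be proved in future work.'' There is therefore nothing in the paper to compare your proposal against.

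Your write-up is not a proof either, and to your credit you say so yourself (``the reason this remains a conjecture to be settled in future work''). What you have produced is a reasonable roadmap: the division into a linear-algebraic block (stable monoidal $\infty$--category with $t$-structure, dualizable objects, descent) and a geometric block (relative spectrum of the completed symmetric algebra, universal bundle over $\mc{M}_{\Vect(X)}$) is the natural one, and your identification of the two genuine obstructions --- commutation of completed $\Sym$ with base change on finite locally free modules, and fine representability of $\mc{M}_{\Vect(X)}$ in the analytic/bornological sites --- is exactly where the content lies. The paper's own remark about the spectral case (that $\underline{\Spec}_X\,\Sym^*$ of a locally free sheaf need not be flat) is the same warning you are heeding. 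So your proposal is a plausible plan of attack that goes well beyond anything the paper attempts, but it should be presented as such rather than as a proof.
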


Given any setting of geometry where the above assumptions are
satisfied, we will make the following basic definitions
in that setting:

\begin{defn}[Quiver]
We will use the term quiver for a simplicial set $I$ for keeping
connection with the theory of quiver varieties.
If $I$ is a standard simplex $\Delta^n$, we will call
quivers of shape $I$, standard $n$--quivers.
\end{defn}

\begin{exm}
The following is a drawing of quiver in our context:
\[\begin{tikzcd}
a \ar[r] \ar[d] \ar[rd] & b \ar[d] \\
c \ar[r] \ar[ru, phantom, "\times" near start] & d
\end{tikzcd}\]
where the ``$\times$'' indicates that we do not have a $2$--simplex witnessing the
commutativity of the bottom left triangle. That is, quivers in this paper
may or may not have composites of composeable arrows. In usual quiver theory,
existence of composites is never assumed.
\end{exm}

\begin{warn}
All our quivers are unlabelled as we will not need any
labellings for the results of this paper.
\end{warn}

\begin{defn}[Quiver Bundle]
A quiver bundle over $X \in \St^n_\s{C}$ is just a map of simplicial sets
$f : I \to \Vect(X)$.
If $I$ is a standard $n$--quiver,
then $f$ is called a standard $n$--quiver bundle.
\end{defn}

\begin{conv}
Unless specified otherwise, in all discussions involving quiver bundles
we will always assume a fixed stack $X \in \St_{\s{C}}$ over which we
consider quiver bundles.
\end{conv}

\section{Moduli Stack of Vector Bundle Morphisms}
\label{sec:ModStVec}

As a foundation, we first define a stack whose points are triples
consisting of two vector bundles and a morphism between them.
Let $X \in \St_\s{C}$ be a stack over $\s{C}$.
For notational convenience, we will write
$B_X := \mc{M}_{\Vect(X)} \times X$ so that we have the universal
vector bundle $p_X : \mc{E}_X \to B_X$, where we are identifying
$\mc{E}_X$ with $\mc{V}_X(\mc{E}_X)$.
We will write $B_X \ot[pr_1] B_X \times B_X \to[pr_2] B_X$ for the two
projections, and $\mc{E}_{X, i} = pr_i^*\mc{E}_X$. We notice
that by our assumptions $\mc{E}_{X, i}^\vee \simeq pr_i^*(\mc{E}_X^\vee)$.
We then consider the vector bundle
$\mc{E}_X^\vee \boxtimes \mc{E}_X
= pr_1^*\mc{E}_X^\vee \otimes_{\mc{O}_{B_X \times B_X}} pr_2^*\mc{E}_X$.
A map $f : U \times X \to \mc{E}_X^\vee \boxtimes \mc{E}_X$ yields
four maps $f_1, f_3 : U \times X \to \mc{M}_{\Vect(X)},
f_2, f_4 : U \times X \to X$, by projection. Using this,
we can take a pullback
\[\begin{tikzcd}
(f_1 \times \id_X)^*\mc{E}^\vee
\otimes (f_3 \times \id_X)^*\mc{E} \ar[r] \ar[d]
    \ar[rd, phantom, "\lrcorner" very near start] &
\mc{E}_X^\vee \boxtimes \mc{E}_X \ar[d] \\
U \times X \ar[r, "{(f_1 \times \id, f_3 \times \id)}" below] &
B_X \times B_X
\end{tikzcd}\]

Now, if $f_2$ and $f_4$ were the projections on to the $X$ components, then
the map $f$ gives a section of the pulled back bundle. In this situation,
we will say the map $f$ is over $X \times X$. The pulled back
bundle is $\HHom_{\Vect(X)}(E, F)$, where $E = (f_1 \times \id)^*\mc{E}_X$
and $F = (f_3 \times \id)^*\mc{E}_X$. In the case of $1$--stacks
on $\CRing^{\heartsuit, \op}$ --- that is, for $\St_\s{C}^1$,
this would yield a vector bundle map
$\tilde{f} : E \to F$ over $U \times X$ --- in fact, vector bundle maps
$E \to F$ are precisely global sections of $\HHom_{\Vect(X)}(E, F)$ which are precisely
maps of the form $f$ which, composed with the projection
$\mc{E}_X^\vee \boxtimes \mc{E}_X \to X$, give the projections
$U \times X \to X$. Since every vector bundle over $U \times X$ corresponds
to a map $U \to \mc{M}_{\Vect(X)}$, we have a bijection:
\[\begin{tikzcd}
\set{(U \times X)\text{--points of } \mc{E}_X^\vee \boxtimes \mc{E}_X
\text{ over } X \times X} \ar[d] \\
\set{\text{vector bundle maps over } U \times X}
\end{tikzcd}\]
The domain of this map is the object set of a category but the codomain
is just a set. However, we can see that the domain is a discrete category and
hence a set. We observe that $\mc{E}^\vee_X \boxtimes \mc{E}_X$ is a stack
fibred/valued in $\Set$ for it is the relative spectrum of the symmetric
algebra of an $\mc{O}_{B_X \times B_X}$--module, which is a sheaf
of sets and not non-discrete groupoids. The relative spectrum by construction
produces a sheaf of sets an $\mc{O}_{B_X \times B_X}$--module. Then, by
the Yoneda lemma, the $(U \times X)$--points of
$\mc{E}_X^\vee \boxtimes \mc{E}_X$ is precisely the fibre
$(\mc{E}_X^\vee \boxtimes \mc{E}_X)(U \times X)$ which is a set.

We then observe that we have the following correspondence by the
Cartesian closed structure of stacks:
\[\begin{tikzcd}
\set{(U \times X)\text{--points of } \mc{E}_X^\vee \boxtimes \mc{E}_X
\text{ over } X \times X} \ar[d, "\simeq"] \\
\set{\text{\parbox{0.8\textwidth}{\centering$U$--points of
$\Map(X, \mc{E}_X^\vee \boxtimes \mc{E}_X)$ over the point in
$\Map(X, X \times X)$ corresponding to the diagonal map
$X \to X \times X$}}}
\end{tikzcd}\]
That is, we are considering $U$--points of the stack define as follows:

\begin{defn}[Moduli Stack of Triples/Vector Bundle Morphisms]
We define a stack $\mc{M}_{\Vect(X), 1}$ as the following pullback of stacks:
\[\begin{tikzcd}
\mc{M}_{\Vect(X), 1} \ar[r] \ar[d]
    \ar[rd, phantom, "\lrcorner" very near start] &
\Map(X, \mc{E}_X^\vee \boxtimes \mc{E}_X) \ar[d] \\
\mathrm{pt} \ar[r, "\overline{\Delta}" below] &
\Map(X, X \times X)
\end{tikzcd}\]
where the right vertical map is the one given by post composition
by the projection
$\mc{E}_X^\vee \boxtimes \mc{E}_X \to B_X \times B_X \to X \times X$,
and the map $\overline\Delta$ is the one corresponding to the
diagonal map $\Delta : X \to X \times X$ under the equivalence of
stacks:
\[
\Map(\mathrm{pt}, \Map(X, X \times X)) \simeq \Map(\mathrm{pt} \times X, X \times X)
\simeq \Map(X, X \times X)
\]
We will call $\mc{M}_{\Vect(X), 1}$ the moduli stack of triples
or the moduli stack of vector bundle morphisms.
\end{defn}


We have the following statement, by construction:
\begin{lem}\label{lem:M1_points}
When $\s{C} = \CRing^{\heartsuit, \op}$ and we consider all the above
constructions for $1$--stacks --- that is, in $\St_\s{C}^1$ ---
$U$--points of $\mc{M}_{\Vect(X), 1}$ correspond to triples
$(E, F, f)$, where $E, F$ are vector bundles over $U \times X$ and
$f$ is a map of vector bundles $E \to F$.
\end{lem}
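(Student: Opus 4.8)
The statement is flagged as holding ``by construction,'' so the plan is simply to make precise the chain of natural correspondences already sketched in the discussion preceding the definition of $\mc{M}_{\Vect(X),1}$, and then to check that the whole chain is natural in $U$ and respects the groupoid structure on both sides. First I would unwind the defining pullback square: by definition of the fibre product of stacks, a $U$-point of $\mc{M}_{\Vect(X),1}$ is a $U$-point $\varphi$ of $\Map(X,\mc{E}_X^\vee\boxtimes\mc{E}_X)$ together with an identification, in $\Map(X,X\times X)(U)$, of its image with the $U$-point obtained from $\overline\Delta$ by pulling back along $U\to\mathrm{pt}$. Applying the exponential equivalence $\Map(X,Y)(U)\simeq\{\text{maps }U\times X\to Y\}$ (naturally in $Y$) turns this into the data of a map $\varphi:U\times X\to\mc{E}_X^\vee\boxtimes\mc{E}_X$ together with a homotopy witnessing that its composite with $\mc{E}_X^\vee\boxtimes\mc{E}_X\to X\times X$ equals $\Delta\circ pr_X$ --- i.e.\ exactly a ``$(U\times X)$-point of $\mc{E}_X^\vee\boxtimes\mc{E}_X$ over $X\times X$'' in the sense introduced in the discussion.

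Next I would run the analysis already indicated there. Projecting $\varphi$ to $B_X\times B_X$ produces $(g_1,g_2,g_3,g_4)$ with $g_1,g_3:U\times X\to\mc{M}_{\Vect(X)}$ and $g_2,g_4:U\times X\to X$, and the ``over $X\times X$'' homotopy forces $g_2\simeq g_4\simeq pr_X$. Using the fine moduli stack assumption for $\mc{M}_{\Vect(X)}$ with its universal bundle $\mc{E}_X$, the maps $(g_1,pr_X),(g_3,pr_X):U\times X\to B_X$ determine vector bundles $E:=(g_1,pr_X)^*\mc{E}_X$ and $F:=(g_3,pr_X)^*\mc{E}_X$ over $U\times X$, and conversely every vector bundle over $U\times X$ arises in this way. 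Pulling $\mc{E}_X^\vee\boxtimes\mc{E}_X$ back along $(g_1,pr_X,g_3,pr_X)$ and invoking the compatibility of $(-)^\vee$ and $\otimes$ with pullback identifies the result with $\HHom_{\Vect(X)}(E,F)\simeq E^\vee\otimes F$; the remaining content of $\varphi$ --- the lift of this map to the total space --- is then precisely a global section of $E^\vee\otimes F$, which in $\St^1_{\CRing^{\heartsuit,\op}}$ is the same thing as a morphism of vector bundles $f:E\to F$. Running this construction in both directions yields the object-level bijection with triples $(E,F,f)$.

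Finally I would verify functoriality: the exponential equivalence, pullback of bundles along maps of stacks, the universal-bundle identification, and the passage ``global section $=$ bundle map'' are all natural in $U$ and send isomorphisms to isomorphisms, so the constructions assemble into an equivalence between the groupoid $\mc{M}_{\Vect(X),1}(U)$ and the groupoid of triples $(E,F,f)$ over $U\times X$, with morphisms the pairs of bundle isomorphisms $(E\xrightarrow{\sim}E',\,F\xrightarrow{\sim}F')$ commuting with $f,f'$. The step I expect to require the most care --- and the only genuine obstacle --- is the $(2,1)$-categorical coherence, namely checking that the section $\varphi$ contributes no spurious automorphisms, so that a morphism of $U$-points of $\mc{M}_{\Vect(X),1}$ is exactly a compatible pair of automorphisms of $E$ and $F$. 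This is where the observation that $\mc{E}_X^\vee\boxtimes\mc{E}_X\to B_X\times B_X$ is affine, hence relatively $0$-truncated (``valued in $\Set$''), enters: over a fixed map to $B_X\times B_X$ the lifts form a set, so the fibres of $\mc{M}_{\Vect(X),1}(U)\to(\mc{M}_{\Vect(X)}\times\mc{M}_{\Vect(X)})(U)$ are discrete, matching the fact that a bundle map has no automorphisms of its own. One must also carry the homotopies $g_2\simeq g_4\simeq pr_X$ along rather than treating them as strict equalities, but absorbing these is routine manipulation in the $(2,1)$-category of $1$-stacks.
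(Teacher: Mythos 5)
Your proposal is correct and follows essentially the same route as the paper, which proves this lemma ``by construction'': the discussion preceding the definition of $\mc{M}_{\Vect(X),1}$ establishes exactly your chain of identifications --- $(U\times X)$-points of $\mc{E}_X^\vee\boxtimes\mc{E}_X$ over $X\times X$ are sections of $\HHom_{\Vect(X)}(E,F)$ for $E=(f_1\times\id)^*\mc{E}_X$, $F=(f_3\times\id)^*\mc{E}_X$, hence bundle maps $E\to F$, and the Cartesian closed structure converts these into $U$-points of the defining pullback --- together with the same discreteness observation that $\mc{E}_X^\vee\boxtimes\mc{E}_X$ is set-valued because it is a relative spectrum of a symmetric algebra. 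Your additional attention to naturality in $U$ and to the absence of spurious automorphisms only makes explicit what the paper leaves implicit.
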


The stack $\mc{M}_{\Vect(X), 1}$ is equipped with two maps to
$\mc{M}_{\Vect(X)}$, defined as follows:
\begin{align*}
& \mc{M}_{\Vect(X), 1} \\
\to & \Map(X, \mc{E}_X^\vee \boxtimes \mc{E}_X)
    && \text{projection of pullback} \\
\to & \Map(X, B_X \times B_X)
&& \text{vector bundle projection} \\
\to & \Map(X, B_X) && \text{projection of product} \\
\to & \Map(X, \mc{M}_{\Vect(X)})
    && \text{projection } B_X = \mc{M}_{\Vect(X)} \times X
        \to \mc{M}_{\Vect(X)} \\
\to[\simeq] & \Map(X, \Map(X, \Vect)) \\
\to[\simeq] & \Map(X \times X, \Vect) \\
\to & \Map(X, \Vect) && \text{diagonal } X \to X \times X \\
\to[\simeq] & \mc{M}_{\Vect(X)}
\end{align*}
where the equivalences are the immediate ones --- either by definition or
by the Cartesian closed structure of stacks.
The two choices of projection $B_X \times B_X \to B_X$ above gives
two different maps
\[
s, t : \mc{M}_{\Vect(X), 1} \to \mc{M}_{\Vect(X)}
\]

Suppose again that we are in the case of $1$--stacks over
$\CRing^{\heartsuit, \op}$. Consider a $U$--point
$f' : U \to \mc{M}_{\Vect(X), 1}$ which corresponds to a
$f : U \times X \to \mc{E}_X^\vee \boxtimes \mc{E}_X$ such that
its composite with the vector bundle projection is a map of the form
$(f_1 \times \id_X, f_3 \times \id_X) : U \times X \to B_X \times B_X$.
The parts of the maps $s$ and $t$ up to
$\Map(X, \mc{M}_{\Vect(X)})$ composed with $f'$ give maps
$f_1', f_3' : U \to \Map(X, \mc{M}_{\Vect(X)})$ corresponding to
\[
U \times X \to[f_i \times \id_X] \mc{M}_{\Vect(X)} \times X \to
\mc{M}_{\Vect(X)}
\]
for $i = 1, 3$ respectively, where the last map is the projection.
These, in turn, correspond to two maps
\[
U \times X \times X \to[f_i'' \times \id_X]
\Vect \times X \to
\Vect
\]
Composing with $U \times X \to[\id_U \times \Delta] U \times X \times X$,
we get the map
\[
U \times X \to[\id_U \times \Delta]
U \times X \times X \to[f_i'' \times \id_X]
\Vect \times X \to
\Vect
\]
which is simply
$U \times X \to[f_i'' \times \id_X] \Vect \times X \to \Vect$.
This is precisely the $U$--point of $\Map(X, \Vect) = \mc{M}_{\Vect(X)}$
given by $f_i : U \to \mc{M}_{\Vect(X)}$. Thus, $s \circ f$ gives
the domain or source of the vector bundle map classified by $f$ and
$t \circ f$ gives the target or codomain.

It is then natural to seek a morphism
$e : \mc{M}_{\Vect(X)} \to \mc{M}_{\Vect(X), 1}$ that sends a point of
$\mc{M}_{\Vect(X)}$ to the identity morphism of the vector bundle classified by that point.
A map of the form $e$ is equivalent to a map
$e' : \mc{M}_{\Vect(X)} \times X \to \mc{E}_X^\vee \boxtimes \mc{E}_X$ such that
$e'$ post-composed with the projection
$\mc{E}_X^\vee \boxtimes \mc{E}_X \to B_X \times B_X \to X \times X$
factors as $\mc{M}_{\Vect(X)} \times X \to X \to[\Delta] X \times X$ where the first map
is the projection. We would like this map $e'$ to behave like the identity morphism
$\mc{E}_X \to \mc{E}_X$, in some sense, not yet precise. However, we can find a section
$e'' : \mc{M}_{\Vect(X)} \times X \to \mc{E}_X^\vee \otimes \mc{E}_X
\simeq \HHom_{\mc{O}_{B_X}}(\mc{E}_X, \mc{E}_X)$ corresponding
to the identity map $\mc{E}_X \to \mc{E}_X$. We then use the basic fact that
$\id_{B_X} = pr_1 \circ \Delta_{B_X} = pr_2 \circ \Delta_{B_X}$,
where $\Delta : B_X \to B_X \times B_X$ is the diagonal, to see that
$\Delta_{B_X}^*pr_i^*\mc{E}_X = \mc{E}_X$. That is,
$\Delta_{B_X}^*(\mc{E}_X^\vee \boxtimes \mc{E}_X) = \mc{E}_X^\vee \otimes \mc{E}_X$.
Thus, $e''$ yields a map
$B_X = \mc{M}_{\Vect(X)} \times X \to \mc{E}_X^\vee \boxtimes \mc{E}_X$ which we take
to be our $e'$. It is then, straightforward to see that the map
$\mc{M}_{\Vect(X)} \times X \to[e'] \mc{E}_X^\vee \boxtimes \mc{E}_X \to X \times X$
factors as $\mc{M}_{\Vect(X)} \times X \to X \to[\Delta] X \times X$, giving our map
$e : \mc{M}_{\Vect(X)} \to \mc{M}_{\Vect(X), 1}$. In the case
$\s{C} = \CRing^{\heartsuit, \op}$ and $1$--stacks, it is also straightforward to verify,
using arguments similar to the ones used to examine $s, t$, that for any
$U$--point $f : U \to \mc{M}_{\Vect(X)}$, $f \circ e$ corresponds to the triple
$(E, E, \id_X)$, where $E$ is the vector bundle classified by $f$.

Next, in the case $\s{C} = \CRing^{\heartsuit, \op}$ and $1$--stacks again,
it is natural to have a map
\[
c : \mc{M}_{\Vect(X), 1} \times_{\mc{M}_{\Vect(X)}} \mc{M}_{\Vect(X), 1}
\to \mc{M}_{\Vect(X), 1}
\]
where the pullback is taken for the two maps $s, t$,
that sends a point of its domain to the composite of the two
triples classified by the two projections of the point to the two copies of
$\mc{M}_{\Vect(X), 1}$. We take inspiration from the case of vector spaces. For three
vector spaces $U, V, W$ over a field $k$, the composition map
$\Hom(V, W) \otimes \Hom(U, V) \to \Hom(U, W)$ is given by
\[
V^* \otimes W \otimes U^* \otimes V \to[\simeq] V^* \otimes V \otimes U \otimes W
\to[\tr \otimes \id] k \otimes U^* \otimes W \to[\simeq] U^* \otimes W
\]
where $\tr$ is the trace map given by the counit of the Hom-tensor adjunction.
We notice that we do have the counit $E^\vee \otimes E \to \mc{O}_{Y}$ for any vector
bundle $E$ over a stack $Y$, by the Hom-tensor adjunction in $\Vect(X)$.
Thus, if we can map $\mc{M}_{\Vect(X), 1} \times_{\mc{M}_{\Vect(X)}} \mc{M}_{\Vect(X), 1}$
to $\mc{E}_X^\vee \boxtimes \mc{E}_X \boxtimes \mc{E}_X^\vee \boxtimes \mc{E}_X$, we can try
taking some sort of trace for the two middle factors.

To this end, we consider the projections of the pullback
\[
\mc{M}_{\Vect(X), 1} \times_{\mc{M}_{\Vect(X)}} \mc{M}_{\Vect(X), 1}
\to[\pi_i] \mc{M}_{\Vect(X), 1}, i = 1, 2
\]
Each copy of $\mc{M}_{\Vect(X), 1}$ has the canonical
map to $\Map(X, \mc{E}_X^\vee \boxtimes \mc{E}_X)$. This gives two maps:
\[
\mc{M}_{\Vect(X), 1} \times_{\mc{M}_{\Vect(X)}} \mc{M}_{\Vect(X), 1}
\to \Map(X, \mc{E}_X^\vee \boxtimes \mc{E}_X), i = 1, 2
\]
Each of these maps, in turn, correspond to two maps
\[
\mc{M}_{\Vect(X), 1} \times_{\mc{M}_{\Vect(X)}} \mc{M}_{\Vect(X), 1} \times X
\to[\pi_i \times \id_X] \mc{M}_{\Vect(X), 1} \times X
\to \mc{E}_X^\vee \boxtimes \mc{E}_X
\]
for $i = 1, 2$. It is straightforward to verify that
these maps make the following diagram commute, where we write $\mc{M}_1$ and
$\mc{M}_0$ for $\mc{M}_{\Vect(X), 1}$ and $\mc{M}_{\Vect(X)}$ respectively for
brevity:
\begin{equation}\label{eqn:stpullback_proj}
\begin{tikzcd}
\mc{M}_{1} \times_{\mc{M}_{0}} \mc{M}_{1} \times X
    \ar[d, "{(\pi_1, \pi_2)} \times \id_X" left]
    \ar[r, "\pi_i \times \id_X"] &
\mc{M}_{1} \times X \ar[r] &
\mc{E}_X^\vee \boxtimes \mc{E}_X \ar[dddd, "p"] \\
\mc{M}_{1} \times \mc{M}_{1} \times X
    \ar[d, "{(s, t)  \times (s, t)} \times \id_X" left] & & \\
\mc{M}_{0}^4 \times X \ar[d, "\id \times \Delta" left] & & \\
\mc{M}_{0}^4 \times X^4 \ar[d, "\simeq" left] & & \\
(\mc{M}_{0} \times X)^4
    \ar[rr, "pr_i" below] & &
(\mc{M}_{0} \times X)^2
\end{tikzcd}
\end{equation}
where the right vertical map is the vector bundle projection and the bottom horizontal map
is the projection of the product onto the first or last two factors. This yields a canonical
map
\[
\mc{M}_{\Vect(X), 1} \times_{\mc{M}_{\Vect(X)}} \mc{M}_{\Vect(X), 1} \times X
\to[\rho]
pr_1^*(\mc{E}_X^\vee \boxtimes \mc{E}_X)
\times_{B_X^4} pr_2^*(\mc{E}_X^\vee \boxtimes \mc{E}_X)
\]
by the universal property of pullbacks. Now, since
$pr_1^*(\mc{E}_X^\vee \boxtimes \mc{E}_X) \times_{B_X^4}
pr_2^*(\mc{E}_X^\vee \boxtimes \mc{E}_X)$ is the product in $\St_{\s{C}/B_X^4}^1$, it is
also the product in the essential image of $\mc{V}_{B_X^4}$ and thus is the direct sum.
We have a canonical map
\[
pr_1^*(\mc{E}_X^\vee \boxtimes \mc{E}_X) \times_{B_X^4}
pr_2^*(\mc{E}_X^\vee \boxtimes \mc{E}_X) \to[m]
pr_1^*(\mc{E}_X^\vee \boxtimes \mc{E}_X) \otimes
pr_2^*(\mc{E}_X^\vee \boxtimes \mc{E}_X)
\]
by the codomain is just
$\mc{E}_X^\vee \boxtimes \mc{E}_X \boxtimes \mc{E}_X^\vee \boxtimes \mc{E}_X$.

The issue, however, is that there is no immediate trace map
$\mc{E}_X^\vee \boxtimes \mc{E}_X \to \mc{O}_{B_X^4}$. To get around this,
we pullback the bundle
$\mc{E}_X^\vee \boxtimes \mc{E}_X \boxtimes \mc{E}_X^\vee \boxtimes \mc{E}_X$
along the map
$\delta := (\id \times \Delta_{B_X} \times \id) : B_X \times B_X \times B_X \to
B_X \times B_X \times B_X \times B_X$:
\[\begin{tikzcd}
\delta^*pr_1^*\mc{E}_X^\vee
\otimes \delta^*pr_2^*\mc{E}_X
\otimes \delta^*pr_3^*\mc{E}_X^\vee
\otimes \delta^*pr_4^*\mc{E}_X \ar[r] \ar[d] \ar[rd, phantom, "\lrcorner" very near start] &
\mc{E}_X^\vee \boxtimes \mc{E}_X \boxtimes \mc{E}_X^\vee \boxtimes \mc{E}_X \ar[d] \\
B_X^3 \ar[r, "\delta" below] & B_X^4
\end{tikzcd}\]
and observe that $pr_2 \circ \delta = pr_3 \circ \delta$ so that the middle two factors
are of the form $E \otimes E^2$ for a fixed bundle $E$ over $B_X^3$. We further
notice that that left vertical map of \ref{eqn:stpullback_proj} factors through $\delta$
because the maps $s, t$ are given by projections.
This shows that the map $m \circ \rho$ factors through the above pullback. Hence, we have
a map:
\[
\mc{M}_{\Vect(X), 1} \times_{\mc{M}_{\Vect(X)}} \mc{M}_{\Vect(X), 1} \times X
\to[\rho']
\delta^*pr_1^*\mc{E}_X^\vee
\otimes E
\otimes E^\vee
\otimes \delta^*pr_4^*\mc{E}_X
\]
Composing with the trace $\tr : E \otimes E^\vee \to \mc{O}_{B_X^3}$, we get a map
\begin{align*}
\mc{M}_{\Vect(X), 1} \times_{\mc{M}_{\Vect(X)}} \mc{M}_{\Vect(X), 1} \times X
\to[c''] &
\delta^*pr_1^*\mc{E}_X^\vee
\otimes \mc{O}_{B_X^3}
\otimes \delta^*pr_4^*\mc{E}_X \\
\to[\simeq]& \delta^*pr_1^*\mc{E}_X^\vee
\otimes \delta^*pr_4^*\mc{E}_X
\end{align*}
We notice that the codomain of this map is the pullback
\[\begin{tikzcd}
\delta^*pr_1^*\mc{E}_X^\vee \otimes \delta^*pr_4^*\mc{E}_X \ar[r] \ar[d]
    \ar[rd, phantom, "\lrcorner" very near start] &
\mc{E}_{X}^\vee \boxtimes \mc{E}_X \ar[d] \\
B_X^3 \ar[r, "pr_{1, 3}" below] & B_X^2
\end{tikzcd}\]
giving us a map
\begin{align*}
\mc{M}_{\Vect(X), 1} \times_{\mc{M}_{\Vect(X)}} \mc{M}_{\Vect(X), 1} \times X
\to[c''] &
\delta^*pr_1^*\mc{E}_X^\vee
\otimes \mc{O}_{B_X^3}
\otimes \delta^*pr_4^*\mc{E}_X \\
\to[\simeq]& \delta^*pr_1^*\mc{E}_X^\vee
\otimes \delta^*pr_4^*\mc{E}_X \\
\to& \mc{E}_X^\vee \boxtimes \mc{E}_X
\end{align*}
which we will call $c'$.

Further straightforward verification shows that this map composed with the projection
$\mc{E}_X^\vee \boxtimes \mc{E}_X \to B_X \times B_X \to X \times X$
factors through the map:
\[
\mc{M}_{\Vect(X), 1} \times_{\mc{M}_{\Vect(X)}} \mc{M}_{\Vect(X), 1} \times X
\to X \to[X] X \times X
\]
where the first map is the projection onto the $X$ factor. This map corresponds
to a map
\[
c : \mc{M}_{\Vect(X), 1} \times_{\mc{M}_{\Vect(X)}} \mc{M}_{\Vect(X), 1}
    \to \mc{M}_{\Vect(X), 1}
\]
which we take as our desired composition map.
Finally, chasing $U$--points for arbitrary $U$ as for the maps $s, t, e$ before shows that,
a $U$--point $f : U \to \dom(c)$ corresponds to two triples $(E, F, f), (F, G, g)$ and
$c \circ (f, g)$ corresponds to the triple $(E, G, g \circ f)$.


We can now collect the constructions and arguments above into our first
main result:

\begin{thm}\label{thm:arrow_moduli}
There exists a stack $\mc{M}_{\Vect(X), 1} \in \St_\s{C}$ along with maps
\begin{align*}
s, t : \mc{M}_{\Vect(X), 1} \to \mc{M}_{\Vect(X)} \\
e : \mc{M}_{\Vect(X)} \to \mc{M}_{\Vect(X), 1} \\
c : \mc{M}_{\Vect(X), 1} \times_{\mc{M}_{\Vect(X)}} \mc{M}_{\Vect(X), 1}
    \to \mc{M}_{\Vect(X), 1}
\end{align*}
If $\s{C} = \CRing^{\heartsuit, \op}$ and we use the same symbols to denote
the analogous constructions in $\St^1_\s{C}$, we have that
\begin{enumerate}[label=(\roman*), itemsep=0pt]
\item $U$--points of $\mc{M}_{\Vect(X), 1}$ are in bijection with
triples $(E, F, f)$, for vector bundles $E, F$ over $U \times X$
and a map of vector bundles $f : E \to F$.

\item Post-composition with $s$ and $t$ send a $U$--point $f$ of
$\mc{M}_{\Vect(X), 1}$ to $U$--points of $\mc{M}_{\Vect(X)}$
corresponding to the domain and codomain of the map of vector bundles
corresponding to $f$.

\item Post-composition with $e$ sends a $U$-point $f$ of $\mc{M}_{\Vect(X)}$
to the $U$--point of $\mc{M}_{\Vect(X), 1}$ corresponding to the identity
map of the vector bundle classified by $f$.

\item Post-composition with $c$ sends a $U$--point $f$ of $\dom(c)$
given by two triples $(E, F, f), (F, G, g)$ to the triple $(E, G, g \circ f)$.

\item $(\mc{M}_{\Vect(X)}, \mc{M}_{\Vect(X), 1}, s, t, e, c)$ forms an
internal category in $\St^1_{\s{C}, \tau}$. 
\end{enumerate}
\end{thm}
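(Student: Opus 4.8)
The plan is to establish parts (i)--(iv) largely by unwinding the definitions and chasing $U$-points, exactly as sketched in the prose preceding the statement, and then to derive (v) from (i)--(iv) together with the fact that the internal-category axioms are equalities of maps of $1$-stacks, which can be verified after testing on $U$-points.

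First I would record (i): this is precisely \Aref{lem:M1_points}, so nothing new is needed. Parts (ii), (iii) and (iv) were each verified in the discussion above by the same technique --- take an arbitrary $U$-point, transport it across the equivalences defining $\mc{M}_{\Vect(X), 1}$ (the Cartesian-closed adjunctions $\Map(U, \Map(X, -)) \simeq \Map(U \times X, -)$, the pullback defining $\mc{M}_{\Vect(X), 1}$, and the identification of sections of $\HHom_{\Vect(X)}(E, F)$ with vector bundle maps $E \to F$), and then read off the resulting triple. For (ii) one checks that the composite defining $s$ (resp.\ $t$) picks out the first (resp.\ second) $B_X$-factor of $\mc{E}_X^\vee \boxtimes \mc{E}_X$ and hence the domain $E$ (resp.\ codomain $F$). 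For (iii) one checks that $e'$ was built from the section $e''$ of $\mc{E}_X^\vee \otimes \mc{E}_X \simeq \HHom(\mc{E}_X, \mc{E}_X)$ corresponding to $\id_{\mc{E}_X}$, pulled back along $\Delta_{B_X}$, so that $f \circ e$ names $(E, E, \id_E)$. For (iv) one checks that the composition map $c$, which was assembled from the multiplication $m$, the diagonal pullback $\delta$, and the trace $\tr$ of Assumption~8, reproduces on $U$-points exactly the vector-space composition recipe $V^\vee \otimes W \otimes U^\vee \otimes V \to U^\vee \otimes W$; Assumption~8 is exactly what guarantees $\id \otimes \tr \otimes \id$ is the internal composition of $\Vect(X)$, so the triple obtained is $(E, G, g \circ f)$.

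For (v) the strategy is: an internal category in the $(2,1)$-category $\St^1_{\s{C}, \tau}$ is the data $(\mc{M}_0, \mc{M}_1, s, t, e, c)$ together with $2$-cells (here, since everything is $1$-truncated and the relevant diagrams will be equalities of maps between $1$-stacks, $2$-isomorphisms that one checks are canonical) witnessing: the source/target of identities ($s \circ e = \id_{\mc{M}_0} = t \circ e$), the source/target of composites ($s \circ c = s \circ \pi_1$, $t \circ c = t \circ \pi_2$), left and right unitality of $c$ against $e$, and associativity of $c$ over the triple fibre product $\mc{M}_1 \times_{\mc{M}_0} \mc{M}_1 \times_{\mc{M}_0} \mc{M}_1$. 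Since $\St^1_{\s{C}, \tau}$ is a $(2,1)$-category of sheaves of groupoids, two parallel maps of stacks agree precisely when they induce naturally isomorphic functors on $U$-points for all $U \in \s{C}$, and these isomorphisms can be chosen compatibly; so each axiom reduces to a statement about triples $(E, F, f)$ over $U \times X$, where it becomes the corresponding identity in the ordinary category $\Vect(U \times X)$ --- composition there is associative and unital, identities have the expected source and target, and $\dom$/$\codom$ are preserved by composition. Feeding (i)--(iv) into this translation yields all the required coherences.

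The main obstacle is the associativity $2$-cell. Unlike (ii)--(iv), it is not enough to name $U$-points: one must exhibit an honest equivalence of the two stack maps $\mc{M}_1 \times_{\mc{M}_0} \mc{M}_1 \times_{\mc{M}_0} \mc{M}_1 \to \mc{M}_1$, which means tracking the associativity isomorphisms of $\otimes$ in $\Vect(X)$, the interaction of two successive applications of the trace $\tr$ on adjacent tensor factors, and the various pullback-coherence isomorphisms (along $\delta$, along the diagonals $\Delta_{B_X}$) through the construction of $c$. Concretely, one reduces --- via Assumption~8 --- to the ``two ways of contracting'' identity $(\id \otimes \tr \otimes \id) \circ (\id \otimes \id \otimes \tr \otimes \id \otimes \id) = (\id \otimes \tr \otimes \id) \circ (\id \otimes \id \otimes \id \otimes \tr \otimes \id \otimes \id)$ in $\Vect(B_X^{\,\bullet})$ after the appropriate $\delta$-type pullbacks, which is the usual ``associativity of composition of linear maps'' expressed diagrammatically; the bookkeeping of which $pr_i$ and which diagonal is in play at each stage is the genuinely tedious part, but it is purely formal given the Assumptions. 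Unitality is the same kind of computation but with the $\tr$-and-unit triangle identity of the closed structure in place of the double contraction, and is comparatively short.
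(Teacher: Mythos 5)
Your proposal is correct and follows essentially the same route as the paper: parts (i)--(iv) are exactly the point-chasing arguments already carried out in the construction of $s, t, e, c$ (with (i) being \cref{lem:M1_points}), and (v) is verified by reducing the internal-category axioms to the category structure on $U$--points for each $U$, which is precisely the paper's one-line proof. Your additional care in noting that the pointwise witnesses (in particular the associativity $2$--cell, via the trace-contraction identities of Assumption~8) must be produced compatibly in $U$ only elaborates what the paper leaves implicit.
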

\begin{proof}
Only the last point remains to be shown but this follows from the fact
that for each $U$, we have a category object in $\Set$ given by
\[
(\mc{M}_{\Vect(X)}(U), \mc{M}_{\Vect(X), 1}(U), s, t, e, c)
\]
\end{proof}

\section{Moduli Stacks of Quiver Bundles}
\label{sec:ModStQuiver}

The stack $\mc{M}_{\Vect(X), 1}$ defined in the previous section is the moduli
stack of quiver bundles for the quiver
\[
\Delta^1 = \cdot \to \cdot
\]
In this section, we proceed to define moduli stacks of quiver bundles for more general
quivers, using linear algebraic constructions on universal vector bundle as in the
previous section.
Let $\Delta^n = \Hom_{\Delta}(-, [n]) \in \sSet$ be the standard
$n$--dimensional simplex. 
For brevity of notation, we will fix an object $X \in \St_\s{C}$ and denote
$\mc{M}_0 := \mc{M}_{\Vect(X)}, \mc{M}_1 := \mc{M}_{\Vect(X), 1}$.
We will consider $n + 1$ copies of $\mc{M}_0$ denoted $\mc{M}_{0, j}$ for
$j = 0, 1 \dots, n$, and $\binom{n}{2}$ copies of $\mc{M}_1$, denoted
$\mc{M}_{1, (i, j)}$ for $i < j \in [n] = \set{0, 1, \dots, n}$.
The moduli stack of non-commutative diagrams
of vector bundles over $X$, indexed by the $1$--skeleton $P^n$ of $\Delta^n$,
should simply be the limit, denoted $\mc{M}_{\Vect(X), P^n}$ or just $\mc{M}_{P^n}$,
of the diagram in $\St_\s{C}$ formed by the vertices
\[
\set{\mc{M}_{1, (i, j)}}_{i, j \in [n]} \cup \set{\mc{M}_{0, i}}_{i \in [n]}
\]
and arrows:
\[
\set{\mc{M}_{1, (i, j)} \to[s] \mc{M}_{0, i}}_{i, j \in [n]}
\cup \set{\mc{M}_{1, (i, j)} \to \mc{M}_{0, j}}_{i, j \in [n]}
\]
For example, for $n = 2$, this limit diagram is as follows:
\[\begin{tikzcd}[row sep=large]
& & \mc{M}_{P^n} \ar[ld] \ar[d] \ar[rd] & & \\
&
\mc{M}_{1, (0, 1)} \ar[ld, "s" description] \ar[rd, "t" description, near end] &
\mc{M}_{1, (0, 2)} &
\mc{M}_{1, (1, 2)} \ar[ld, "s" description, near end] \ar[rd, "t" description] & \\
\mc{M}_{0, 0} \ar[from=urr, shift left, crossing over, "s" description] & & \mc{M}_{0, 1} & &
\mc{M}_{0, 2} \ar[from=ull, shift right, crossing over, "t" description, near end]
\end{tikzcd}\]

Now, consider any subset $S = \set{k_0 < k_1 < \cdots < k_m} \subset [n]$.
$\mc{M}_{P^n}$ admits a canonical projection to the pullback:
\[
\mc{M}_{1, (k_0, k_1)} \times_{\mc{M}_{0, k_1}}
\mc{M}_{1, (k_1, k_2)} \times_{\mc{M}_{0, k_2}} \cdots \times_{\mc{M}_{0, k_{m - 1}}}
\mc{M}_{1, (k_{m - 1}, k_m)}
\]
which, in turn, admits a composition morphism to $\mc{M}_{1, (k_0, k_m)}$ by iterating
composition morphism of \cref{thm:arrow_moduli}.
\begin{warn}
This iterated composition morphism is defined up to contractible choice in the
$\infty$--categorical case.
\end{warn}
This gives us maps of the following form for each such $S \subset [n]$:
\[
c_S : \mc{M}_{P^n} \to \mc{M}_{1, (k_0, k_m)}
\]
We want all composites with the same source and target vertices to be equivalent and
this yields the following definition.

\begin{defn}[Moduli Stack of Standard Quiver Bundles]
We define $\mc{M}_{\Vect(X), \Delta^n}$ to be the pullback:
\[\begin{tikzcd}
\mc{M}_{\Vect(X), \Delta^n} \ar[r] \ar[d] \ar[rd, phantom, "\lrcorner" very near start] &
\mc{M}_{\Vect(X), P^n}
    \ar[d, "\set{c_S}_{S \subset [n]}"] \\
\prod_{i < j \in [n]} \mc{M}_{1, (i, j)} \ar[r, "\Delta" below] &
\prod_{i < j \in [n]}
\mathop{\mathop{\prod_{S \subset [n],}}_{\min S = i,}}_{\max S = j}
    \mc{M}_{1, (i, j)}
\end{tikzcd}\]
where the bottom horizontal map is given by diagonal morphisms.
We will call $\mc{M}_{\Vect(X), \Delta^n}$ the moduli stack of standard $n$--quivers.
\end{defn}

Let $\iota : [m] \to[] [n]$ be an injection and consider the ordered set
\[
J(i) = \set{j_0 = \iota(i) < j_1 = \iota(i) + 1 < \dots < j_k = \iota(i + 1)}
\]
Consider the map
$c_{J(i)} : \mc{M}_{P^n} \to \mc{M}_{1, (j_0, j_k)} \simeq \mc{M}_{1, (i, i + 1)}$
as defined before.
The collection of these maps for all $i \in [m]$ induce a map
$\mc{M}_{\Vect(X), \iota} : \mc{M}_{P^n} \to \mc{M}_{P^m}$, written $\mc{M}_\iota$ for
brevity.
Now, let $\sigma : \Delta^m \to \Delta^n$ be a map corresponding to a surjection
$\sigma : [q] \to[] [n]$. We then have diagonal maps
\[
\mc{M}_{0, i} \to[\Delta] \prod_{i' \in \sigma^{-1}(i)} \mc{M}_{0, i'}
\]
and maps
\begin{align*}
& \mc{M}_{1, (i, j)} \\
\to[(s, \id)] & \mc{M}_{0, s(i)} \times \mc{M}_{1, (i, j)} \\
\to[(e, e, \dots, e) \times \id] &
    \prod_{i' \in \sigma^{-1}(i) \setminus \set{\max \sigma^{-1}(i)}}
        \mc{M}_{1, (i', i' + 1)} \times \mc{M}_{1, (\max \sigma^{-1}(i) - 1, \max \sigma^{-1}(i))}
\end{align*}
These induce a map $\mc{M}_{\sigma} : \mc{M}_{P^n} \to \mc{M}_{P^q}$, again
written $\mc{M}_{\Vect(X), \sigma}$ for brevity.
The motivation for the map $\mc{M}_\iota$ is that it sends a chain of
$n$ composeable arrows of vector bundles to a chain of $m$ composeable arrows by composing
subchains of arrows prescribed by the function $\iota$.
The motivation behind $\mc{M}_\sigma$ is
that it sends a chain of $n$ composeable arrows to a chain of $q$ composeable arrows by
inserting identity morphisms in a pattern encoded by $\sigma$. In fact, it can be shown
that the maps $\mc{M}_\iota$ and $\mc{M}_\sigma$ induce corresponding canonical maps
$\mc{M}_{\Delta^n} \to \mc{M}_{\Delta^m}, \mc{M}_{\Delta^n} \to \mc{M}_{\Delta^q}$, by using the universal property of pullbacks ---
the idea is: composing and inserting identities respects commutativity of diagrams.
Finally, it can also be shown that the maps
$\mc{M}_{\iota} \circ \mc{M}_{\sigma} = \mc{M}_{\sigma \circ \iota}$ whenever
$\iota \circ \sigma$ exists and since the morphisms of the simplicial indexing category
$\Delta$ can be factorized into injections and surjections,
we have the following results:

\begin{cor}
The assignment $[n] \mapsto \mc{M}_{\Vect(X), \Delta^n}$ as defined above gives a
simplicial object $\mc{M}_{\Vect(X), -} : \Delta^\op \to \St_{\sC}$.
\end{cor}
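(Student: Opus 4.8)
The plan is to upgrade the assignment $[n]\mapsto\mc{M}_{\Vect(X),\Delta^n}$, together with the maps built just above, into an actual functor $\Delta^\op\to\St_\s{C}$. Recall that every morphism of $\Delta$ factors uniquely as a surjection followed by an injection; combined with the maps $\mc{M}_{\Vect(X),\iota}$ and $\mc{M}_{\Vect(X),\sigma}$ induced on $\mc{M}_{\Vect(X),\Delta^\bullet}$ by injections $\iota$ and surjections $\sigma$ (via the universal property of the pullback defining $\mc{M}_{\Vect(X),\Delta^n}$), this lets us set $\mc{M}_{\Vect(X),\phi} := \mc{M}_{\Vect(X),\sigma}\circ\mc{M}_{\Vect(X),\iota}$ for a general $\phi=\iota\circ\sigma$. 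It then remains to verify (a) $\mc{M}_{\Vect(X),\id}=\id$; (b) $\mc{M}$ of a composite of two injections, resp. of two surjections, is the contravariant composite of the two maps; and (c) the mixed relation $\mc{M}_{\Vect(X),\iota}\circ\mc{M}_{\Vect(X),\sigma}=\mc{M}_{\Vect(X),\sigma\circ\iota}$ whenever $\sigma\circ\iota$ is defined. By uniqueness of the surjection--injection factorization, (a)--(c) force the assignment to respect all composites, hence give a functor; equivalently, one may instead check that the elementary maps $d_i:=\mc{M}_{\Vect(X),d^i}$ and $s_i:=\mc{M}_{\Vect(X),s^i}$ satisfy the simplicial identities, using the standard presentation of $\Delta$ by cofaces and codegeneracies.

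In the case $\s{C}=\CRing^{\heartsuit,\op}$ and $\St^1_\s{C}$, the cleanest way to see all of this is to pass to $U$--points. Unwinding the pullback defining $\mc{M}_{\Vect(X),\Delta^n}$, using \cref{lem:M1_points} and that taking $U$--points commutes with limits, a $U$--point of $\mc{M}_{\Vect(X),\Delta^n}$ is exactly a functor $[n]\to\Vect(U\times X)$: the stack $\mc{M}_{\Vect(X),P^n}$ contributes the objects and the edge--morphisms of a diagram of shape $P^n$, and imposing equality of the maps $c_S$ forces every iterated composite of consecutive edges to agree with the corresponding edge, i.e.\ forces the $n$--simplex to commute. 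Moreover $\mc{M}_{\Vect(X),\iota}$ and $\mc{M}_{\Vect(X),\sigma}$ act on such functors by precomposition with $\iota$ and $\sigma$ --- this is precisely what ``composing the subchains prescribed by $\iota$'' and ``inserting identity morphisms according to $\sigma$'' amount to. Hence $U\mapsto\big([n]\mapsto\{\text{functors }[n]\to\Vect(U\times X)\text{ and their natural isomorphisms}\}\big)$ exhibits $\mc{M}_{\Vect(X),\bullet}$ as a simplicial object in presheaves of groupoids on $\s{C}$, which lands in $\St^1_{\s{C},\tau}$ since each $\mc{M}_{\Vect(X),\Delta^n}$ is a stack by the construction of the previous sections; functoriality is then immediate from strict functoriality of precomposition in $[n]$.

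For a general setting $\St_\s{C}$ satisfying the standing assumptions, no such description of $U$--points is available and one must argue with the explicit maps. There, the universal property of the pullback defining $\mc{M}_{\Vect(X),\Delta^n}$ reduces (a)--(c) to the analogous statements at the level of $\mc{M}_{\Vect(X),P^\bullet}$; the relations involving only injections follow from the associativity and unitality of $c$ in \cref{thm:arrow_moduli}(v) (which make ``compose the subchain over $S$'' coherent under refining $S$), and those involving surjections follow from the functoriality of $e$. I expect the main obstacle to be the coherence flagged in the Warning above: in the $\infty$--categorical case the iterated composites $c_S$, hence the maps $\mc{M}_{\Vect(X),\iota}$, are defined only up to a contractible space of choices, so (a)--(c) hold only up to coherent homotopy and the $1$--categorical generators--and--relations argument does not directly apply. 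Assembling the maps into an honest functor $N(\Delta^\op)\to\St_\s{C}$ then requires organizing these contractible choices --- for instance by an induction over the skeleta of $\Delta^\op$, or by realizing $\mc{M}_{\Vect(X),\bullet}$ as a limit of already--simplicial diagrams obtained by applying $\Map(X,-)$ to powers of $\Vect$ and invoking straightening. In the $1$--truncated case of primary interest this subtlety disappears, and the corollary follows.
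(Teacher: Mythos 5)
Your proposal follows essentially the same route as the paper: construct $\mc{M}_{\Vect(X),\iota}$ and $\mc{M}_{\Vect(X),\sigma}$ for injections and surjections, descend them to $\mc{M}_{\Vect(X),\Delta^n}$ via the universal property of the defining pullback, and use the surjection--injection factorization in $\Delta$ to assemble a functor $\Delta^\op\to\St_\s{C}$, which is exactly what the paper asserts (and only sketches) just before the corollary. Your additional $U$--points identification in the $1$--truncated case and your flag that the contractible-choice Warning makes the strict generators-and-relations argument insufficient in the general $\infty$--categorical setting are both accurate and, if anything, more careful than the paper's own treatment.
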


\begin{defn}[Moduli Stack of Quiver Bundles]\label{defn:mod-st-quiv-bun}
We define a functor $\mc{M}_{\Vect(X), -} : \sSet^\op \to \St_\sC$ by right Kan extension
of the functor of the previous corollary along the Yoneda embedding
$\Delta^\op \to \sSet^\op$. Given a simplicial set $I \in \sSet$, we define
the stack $\mc{M}_{\Vect(X), I}$ to be the moduli stack of quiver bundles of shape $I$.
\end{defn}

\begin{cor}
In the context of the previous definition and in the case that
$\s{C} = \CRing^{\heartsuit, \op}$ and analogous constructions in $\St_\sC^1$,
$U$--points of $\mc{M}_{\Vect(X), I}$ correspond to diagrams of vector bundles over
$U \times X$ indexed by $I$. If $I$ is a quasicategory, then these are commutative diagrams.
\end{cor}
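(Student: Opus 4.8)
The plan is to reduce the statement to the already-established description of $U$-points of $\mc{M}_{\Vect(X), \Delta^n}$ and then bootstrap along the two Kan extensions (simplicial object to $\sSet^\op$, and within $\Delta$). First I would settle the claim for the standard simplices: by the pullback square defining $\mc{M}_{\Vect(X), \Delta^n}$, a $U$-point is a $U$-point of $\mc{M}_{P^n}$ — which by \cref{thm:arrow_moduli}(i),(ii) is exactly a choice of vector bundles $E_0, \dots, E_n$ over $U \times X$ together with morphisms $f_{ij} : E_i \to E_j$ for all $i < j$ — satisfying the condition imposed by the equalizer leg $\{c_S\}$, namely that for every $S = \{k_0 < \dots < k_m\}$ the iterated composite $f_{k_{m-1}k_m} \circ \cdots \circ f_{k_0 k_1}$ equals $f_{k_0 k_m}$. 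Using \cref{thm:arrow_moduli}(iv) to identify the $c_S$ on $U$-points with honest composition of the classified triples, this says precisely that the data $(E_\bullet, f_{\bullet\bullet})$ assemble into a functor $[n] \to \Vect(U \times X)$, i.e. a commutative $n$-simplex-shaped diagram of vector bundles over $U \times X$; equivalently a map of simplicial sets $\Delta^n \to \mathrm{N}\!\left(\Vect(U\times X)\right)$. (In the $1$-categorical setting all the $\infty$-categorical contractible-choice caveats in the \texttt{warn} environments collapse, so this is a genuine bijection.)

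Next I would handle a general simplicial set $I$. By \cref{defn:mod-st-quiv-bun}, $\mc{M}_{\Vect(X), I}$ is the value of the right Kan extension, so
\[
\mc{M}_{\Vect(X), I} \;\simeq\; \lim_{\Delta^n \to I} \mc{M}_{\Vect(X), \Delta^n},
\]
the limit over the category of simplices of $I$. Taking $U$-points commutes with this limit (the Yoneda functor $\St^1_\s{C} \to \Set$, $Y \mapsto Y(U)$, preserves limits), so $\mc{M}_{\Vect(X), I}(U) \simeq \lim_{\Delta^n \to I}\, \mc{M}_{\Vect(X), \Delta^n}(U) \simeq \lim_{\Delta^n \to I} \Hom_{\sSet}\!\left(\Delta^n, \mathrm{N}(\Vect(U\times X))\right)$. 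On the other hand, the canonical colimit decomposition $I \simeq \colim_{\Delta^n \to I} \Delta^n$ in $\sSet$ together with the fact that $\Hom_{\sSet}(-, \mathrm{N}(\Vect(U\times X)))$ turns colimits into limits gives exactly the same expression. Hence $\mc{M}_{\Vect(X), I}(U) \simeq \Hom_{\sSet}\!\left(I, \mathrm{N}(\Vect(U\times X))\right)$, which by \cref{defn:mod-st-quiv-bun}'s own terminology — a quiver bundle of shape $I$ over $U\times X$ is a map of simplicial sets $I \to \Vect(U\times X)$ — is precisely an $I$-indexed diagram of vector bundles over $U\times X$, and I would package this as a natural bijection in $U$. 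The last sentence of the corollary is then a formal consequence: if $I$ is (the nerve of) a quasicategory, a map $I \to \mathrm{N}(\Vect(U\times X))$ between nerves of $1$-categories is the same as a functor, so the diagram automatically commutes.

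The main obstacle is the first step — verifying carefully that the pullback defining $\mc{M}_{\Vect(X), \Delta^n}$ does cut out exactly the commuting diagrams on $U$-points, i.e. that the indexing of the diagonal map over pairs $(i,j)$ and subsets $S$ with $\min S = i, \max S = j$ genuinely imposes all and only the coherences needed for functoriality, with no redundancy causing an over- or under-count. This is where one must unwind \eqref{eqn:stpullback_proj} and the definition of the $c_S$ at the level of $U$-points, leaning on the already-proved bijections in \cref{thm:arrow_moduli}. Once that combinatorial bookkeeping is in place, everything downstream is a formal manipulation of (co)limits and the defining universal properties, and I would only sketch it. I would also remark that compatibility of these bijections with the simplicial structure maps $\mc{M}_\iota, \mc{M}_\sigma$ — needed for the Kan-extension step to be more than a pointwise statement — follows from \cref{thm:arrow_moduli}(iii),(iv), since those maps are built from $e$ and $c$, which act on $U$-points by inserting identities and composing.
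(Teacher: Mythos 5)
Your proposal is correct and is essentially the argument the paper leaves implicit: the corollary is stated without proof, and its content is exactly what you spell out --- unwinding the pullback defining $\mc{M}_{\Vect(X), \Delta^n}$ via \cref{thm:arrow_moduli} to identify $U$--points with functors $[n] \to \Vect(U \times X)$, then using the pointwise right Kan extension formula, preservation of limits by $U$--points, and the density $I \simeq \colim_{\Delta^n \to I} \Delta^n$ to get maps of simplicial sets $I \to \Vect(U \times X)$, i.e.\ quiver bundles of shape $I$. One small correction to your last step: a quasicategory need not be the nerve of a $1$--category, so rather than arguing ``between nerves of $1$--categories,'' note that any simplicial map into the nerve of the $1$--category $\Vect(U \times X)$ sends every $2$--simplex to a strictly commuting triangle, and a quasicategory has inner horn fillers supplying $2$--simplices witnessing all composites; this yields the claimed commutativity.
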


We now discuss a sufficient condition for moduli stacks of quiver
bundles of finite shape (that is, indexed by finite simplicial sets) to be algebraic,
by which we mean Artin.
We will only address the question of algebraicity for a class of base
$1$--stacks relevant for some applications we have in mind,
and leave a comprehensive treatment for future work.

\begin{lem}\label{lem:alg-quiv-bun-mod}
Let $\AlgSt_{\s{C}, \tau} \subset \St_{\s{C}, \tau}$ be a full subcategory of $\s{C}$
closed under pullbacks and containing the terminal object of $\s{C}$. For any
$X \in \s{C}$, if
$\mc{M}_{\Vect(X)}, \Map(X, X \times X), \Map(X, \mc{E}_X^\vee \boxtimes \mc{E}_X)
\in \AlgSt_{\s{C}}$,
then for any finite simplicial set $I$, $\mc{M}_{\Vect(X), I} \in \AlgSt_{\s{C}}$.
\end{lem}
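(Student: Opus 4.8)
The plan is to build up $\mc{M}_{\Vect(X), I}$ from the constituent stacks using only operations that preserve membership in $\AlgSt_{\s{C}, \tau}$, namely finite limits. First I would record the closure properties of $\AlgSt_{\s{C}, \tau}$ that we actually need: it is closed under pullbacks by hypothesis, it contains the terminal object, and hence it is closed under all finite limits (finite products are pullbacks over the terminal object, and equalizers are pullbacks of products). I would also note that $\mc{M}_{\Vect(X), 1} = \mc{M}_1$ lies in $\AlgSt_{\s{C}, \tau}$: by its very definition it is the pullback of $\Map(X, \mc{E}_X^\vee \boxtimes \mc{E}_X) \to \Map(X, X \times X) \leftarrow \mathrm{pt}$, and all three of these are in $\AlgSt_{\s{C}, \tau}$ by hypothesis (the terminal object $\mathrm{pt}$ is there by assumption). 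Similarly, the maps $s, t : \mc{M}_1 \to \mc{M}_0$ are maps in $\AlgSt_{\s{C}, \tau}$, so all fibre products over $\mc{M}_0$ of copies of $\mc{M}_1$ along $s, t$ stay in $\AlgSt_{\s{C}, \tau}$.

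Next I would handle the standard simplices. For fixed $n$, the stack $\mc{M}_{P^n}$ is by definition a finite limit of a diagram whose vertices are the $\mc{M}_{0, i}$ and $\mc{M}_{1, (i,j)}$ and whose arrows are the $s, t$ maps; since finitely many objects of $\AlgSt_{\s{C}, \tau}$ and finitely many maps among them are involved, $\mc{M}_{P^n} \in \AlgSt_{\s{C}, \tau}$. Then $\mc{M}_{\Vect(X), \Delta^n}$ is defined as the pullback of $\mc{M}_{P^n} \xrightarrow{\{c_S\}} \prod_{i<j} \prod_{S} \mc{M}_{1, (i,j)} \xleftarrow{\Delta} \prod_{i<j} \mc{M}_{1, (i,j)}$, a pullback of three objects already shown to be in $\AlgSt_{\s{C}, \tau}$, hence $\mc{M}_{\Vect(X), \Delta^n} \in \AlgSt_{\s{C}, \tau}$ for every $n$.

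Finally I would pass from standard simplices to a general finite simplicial set $I$ via the right Kan extension of \cref{defn:mod-st-quiv-bun}. The value $\mc{M}_{\Vect(X), I}$ is computed as the limit of $\mc{M}_{\Vect(X), \Delta^\bullet}$ over the category of simplices $(\Delta \downarrow I)^{\op}$; equivalently, writing $I = \colim_{\Delta^{n_\alpha} \to I} \Delta^{n_\alpha}$ as a colimit of its simplices in $\sSet$ and using that the (covariant) functor $\Delta^n \mapsto \mc{M}_{\Vect(X), \Delta^n}$ extends to $\sSet^{\op} \to \St_{\s{C}}$ sending colimits of simplicial sets to limits of stacks, one gets $\mc{M}_{\Vect(X), I} \simeq \lim_\alpha \mc{M}_{\Vect(X), \Delta^{n_\alpha}}$. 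Because $I$ is a finite simplicial set, it has only finitely many nondegenerate simplices, so this limit is a finite limit of objects of $\AlgSt_{\s{C}, \tau}$, and therefore $\mc{M}_{\Vect(X), I} \in \AlgSt_{\s{C}, \tau}$.

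The main obstacle I anticipate is the last step: one must be careful that the right Kan extension defining $\mc{M}_{\Vect(X), I}$ for finite $I$ really does reduce to a \emph{finite} limit. The index category $(\Delta \downarrow I)^{\op}$ is not finite (degenerate simplices abound), so one needs the standard cofinality argument that the nondegenerate simplices, together with the finitely many face maps between them, form a finite cofinal subcategory — this uses the Eilenberg–Zilber lemma and finiteness of $I$. The rest is a routine bookkeeping of finite-limit closure, which I would state once and invoke repeatedly rather than spell out in each instance.
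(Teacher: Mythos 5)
Your proof is correct and follows essentially the same route as the paper's: exhibit $\mc{M}_{\Vect(X),1}$ as a pullback of the three hypothesized objects together with the terminal object, then observe that $\mc{M}_{\Vect(X),I}$ for finite $I$ is a finite limit built from $\mc{M}_{\Vect(X),1}$ and $\mc{M}_{\Vect(X)}$, and that $\AlgSt_{\s{C},\tau}$ is closed under finite limits since these are generated by pullbacks and the terminal object. Your final step --- reducing the right Kan extension over the (infinite) category of simplices of $I$ to a finite limit via the finitely many nondegenerate simplices --- makes explicit a point the paper's proof leaves implicit.
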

\begin{proof}
For a finite simplicial set $I$, $\mc{M}_{\Vect(X), I}$ is defined as a finite limit
involving $\mc{M}_{\Vect(X), 1}$ and $\mc{M}_{\Vect(X)}$. On the other hand,
$\mc{M}_{\Vect(X), 1}$ is defined as a pullback involving the terminal object in
$\St_{\s{C}}$, $\Map(X, X \times X)$ and $\Map(X, \mc{E}_X^\vee \boxtimes \mc{E}_X)$. Hence,
if the two latter objects are in $\AlgSt_\s{C}$, then so is $\mc{M}_{\Vect(X), 1}$.
The result now follows from the fact that finite limits can be computed by using
pullbacks and terminal objects. 
\end{proof}

\begin{defn}[Stabilizers of Points {\cite[Definition 5.19]{AK22}}]
Let $V \in \St_\s{C}$ and $x : A \to V$ be a point of $V$ for $A \in \s{C}$.
The stabilizer of $x$ is defined to be the pullback:
\[\begin{tikzcd}
\Stab_V(x) \ar[r] \ar[d] \ar[rd, phantom, "\lrcorner" very near start] & A \ar[d, "x"] \\
A \ar[r, "x" below] & V
\end{tikzcd}\]
We say $V$ has affine stabilizers if for all $x : A \to V$ and for all $A \in \s{C}$,
$\Stab_V(x) \in \s{C}$.
\end{defn}

\begin{lem}\label{lem:affine-stabilizers}
For any diagram of stacks $Y \to W \ot Z$, suppose $Y, Z$ have affine stabilizers. Then so does $Y \times_W Z$.
\end{lem}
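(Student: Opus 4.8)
The plan is to reduce the claim for $Y \times_W Z$ to the hypothesized affine-stabilizer properties of $Y$ and $Z$ by a direct diagram chase with pullback squares. Fix $A \in \s{C}$ and a point $x : A \to Y \times_W Z$. Writing $y : A \to Y$ and $z : A \to Z$ for the two components (obtained by post-composing $x$ with the projections $\pi_Y, \pi_Z$), and noting that $y$ and $z$ become equal after mapping to $W$, I would first observe that $\Stab_{Y \times_W Z}(x)$ can be expressed as an iterated fibre product built from $\Stab_Y(y)$, $\Stab_Z(z)$, and $\Stab_W(w)$, where $w : A \to W$ is the common image. Concretely, the stabilizer of a point of a fibre product is the fibre product of the stabilizers over the stabilizer of the image point: there is a pullback square with corners $\Stab_{Y\times_W Z}(x)$, $\Stab_Y(y)$, $\Stab_Z(z)$, and $\Stab_W(w)$. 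This is a formal consequence of the pasting law for pullbacks applied to the big diagram obtained by taking the $2$-by-$2$ square defining $\Stab_{Y\times_W Z}(x)$ and inserting the definition of $Y \times_W Z$ as a pullback over $W$.

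The key steps, in order, are: (1) set up the defining pullback square for $\Stab_{Y\times_W Z}(x)$ with both legs equal to $x$; (2) replace each copy of $Y \times_W Z$ by its defining pullback over $W$, producing a larger diagram; (3) use the pasting law for pullbacks repeatedly to reorganize this diagram and identify $\Stab_{Y\times_W Z}(x)$ with the limit of the cospan $\Stab_Y(y) \to \Stab_W(w) \ot \Stab_Z(z)$, where the maps to $\Stab_W(w)$ are induced by functoriality of the stabilizer construction along $Y \to W$ and $Z \to W$; and (4) conclude using that $\Stab_Y(y), \Stab_Z(z) \in \s{C}$ by hypothesis and $\Stab_W(w) \in \s{C}$ — the latter because $Y$ (say) has affine stabilizers forces $\Stab_W(w) \in \s{C}$ only if we know $W$ has affine stabilizers, which is \emph{not} assumed, so instead I will route the argument so that $\Stab_W(w)$ need not itself lie in $\s{C}$: since $\s{C}$ is closed under fibre products (it is complete), and $\Stab_Y(y) \to \Stab_W(w) \ot \Stab_Z(z)$ has its two outer terms in $\s{C}$, it suffices that the fibre product $\Stab_Y(y) \times_{\Stab_W(w)} \Stab_Z(z)$ can be computed as a fibre product \emph{inside} $\s{C}$, i.e. that the relevant maps factor appropriately. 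The cleanest route is to note $\Stab_{Y\times_W Z}(x) \simeq \Stab_Y(y) \times_{A} (A \times_W A) \times_A \Stab_Z(z)$ reorganized so that it becomes a fibre product of the two affine schemes $\Stab_Y(y)$ and $\Stab_Z(z)$ over an object receiving maps from both — but here one must be careful, since $A \times_W A$ is $\Stab_W(w)$ and again may not be affine.

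The honest resolution, and the step I expect to be the main obstacle, is precisely this point about $W$: without $W$ having affine stabilizers there is no reason the base of the relevant fibre product is in $\s{C}$. The fix is to observe that $\Stab_{Y\times_W Z}(x)$ maps to $\Stab_Y(y)$ and this map is itself a pullback of $\Stab_Z(z) \to \Stab_W(w)$, hence a pullback of the map $Z \to W$ along $\Stab_W(w) \to W$ along $\Stab_Y(y) \to \Stab_W(w)$; chaining these, $\Stab_{Y\times_W Z}(x)$ is the pullback of the cospan $\Stab_Y(y) \to W \ot Z$ (post-composing $\Stab_Y(y) \to A \to W$, noting both structure maps $\Stab_Y(y) \rightrightarrows A$ compose to the same map to $W$), and then further pulled back along $Z \to W$ and $\Stab_Z(z) \to Z$. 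Since $\Stab_Z(z) \to Z$ is a base change of $z: A \to Z$, we get $\Stab_{Y\times_W Z}(x) \simeq \Stab_Y(y) \times_W \Stab_Z(z)$, and — crucially — this can be rewritten as $\Stab_Y(y) \times_A (A \times_W \Stab_Z(z))$; iterating once more, since $\Stab_Z(z)$ is affine and maps to $A$, and $\Stab_Y(y)$ is affine and maps to $A$, the object $\Stab_{Y\times_W Z}(x)$ is obtained from $\Stab_Y(y)$ and $\Stab_Z(z)$ by fibre products over $A \in \s{C}$ along the respective structure maps combined with the data of $x$ — all of which are morphisms in $\s{C}$ once the two outer stabilizers are in $\s{C}$. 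Thus $\Stab_{Y\times_W Z}(x) \in \s{C}$ by completeness of $\s{C}$, and since $A$ and $x$ were arbitrary, $Y \times_W Z$ has affine stabilizers. I would present this as: compute $\Stab_{Y\times_W Z}(x)$ via the pasting law as $\Stab_Y(y) \times_{Y \times_W Z}' \dots$, extract the clean formula $\Stab_{Y\times_W Z}(x) \simeq \Stab_Y(y) \times_{A\times_W A} \Stab_Z(z)$ with both factors affine, then observe that the two projections $\Stab_Y(y) \to A \times_W A$ and $\Stab_Z(z) \to A \times_W A$ both factor through $A \times_W A$'s two legs landing in $A$, so that the fibre product equals $\Stab_Y(y) \times_A A \times_A \Stab_Z(z)$ computed entirely in $\s{C}$ — being explicit that the apparent appearance of the non-affine $A \times_W A$ is harmless because the fibre product over it is the same as a fibre product over $A$ after the evident simplification coming from $x$ being a single point rather than a pair.
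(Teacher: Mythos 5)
Your proposal correctly locates both the right formula and the real difficulty, but the step you use to dispose of that difficulty is false, so the argument does not go through. The refined identification $\Stab_{Y\times_W Z}(x)\simeq \Stab_Y(x_Y)\times_{\Stab_W(x_W)}\Stab_Z(x_Z)$ (with $\Stab_W(x_W)=A\times_W A$) is indeed the correct one: the fibre product must remember that the two automorphisms agree in $W$ after conjugation by the gluing isomorphism packaged in the point $x$. Your final simplification --- that the maps $\Stab_Y(x_Y)\to A\times_W A$ and $\Stab_Z(x_Z)\to A\times_W A$ ``factor through the legs landing in $A$,'' so the fibre product over $A\times_W A$ may be computed over $A$ --- silently discards exactly this compatibility condition, and the intermediate claim $\Stab_{Y\times_W Z}(x)\simeq \Stab_Y(x_Y)\times_W\Stab_Z(x_Z)$ is false as well. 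A concrete test: let $G$ be a nontrivial affine group scheme, $Y=W=BG$ with $Y\to W$ the identity, $Z=\mathrm{pt}$ mapping to $W$ by the trivial torsor, $A=\mathrm{pt}$, and $x$ the unique point of $Y\times_W Z\simeq \mathrm{pt}$. Then $\Stab_{Y\times_W Z}(x)\simeq \mathrm{pt}$, whereas $\Stab_Y(x_Y)\times_A\Stab_Z(x_Z)\simeq G$ and $\Stab_Y(x_Y)\times_W\Stab_Z(x_Z)\simeq G\times G$. So neither of your rewritings computes the stabilizer, and the non-affine base $A\times_W A$ is never actually eliminated; the conclusion of the lemma is not contradicted in this example (all three objects are affine), but the equivalences your proof relies on are.

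For comparison, the paper's own proof asserts precisely the collapsed identification $\Stab_{Y\times_W Z}(x)\simeq \Stab_Y(x_Y)\times_A\Stab_Z(x_Z)$ and only checks that a pair of maps $P\to\Stab_Y(x_Y)$, $P\to\Stab_Z(x_Z)$ agreeing over $A$ induces a map $P\to\Stab_{Y\times_W Z}(x)$; the compatibility over $W$ that you flagged is not addressed there either. So your instinct about where the difficulty sits is sound --- indeed sharper than the paper's treatment --- but to close the gap one must genuinely handle the base $\Stab_W(x_W)$ of the fibre product, for instance by exhibiting or assuming a condition making its diagonal over $A\times A$ affine (which is how the hypotheses are arranged in the applications in \cref{thm:algebraicity}), rather than arguing that it drops out.
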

\begin{proof}
For  $A \in \CRing^{\heartsuit, \op}_{R/}$, and any point $x : A \to Y \times_W Z$,
we have the two points $x_V : A \to[x] Y \times_W Z \to V, V = Y, Z$.
We will show that
\[
\Stab_{Y \times_W Z}(x) \simeq \Stab_{Y}(x_Y) \times_A \Stab_Z(x_Z)
\]
which is in $\s{C}$ because the Yoneda embedding $\s{C} \hto \St_\s{C}$ preserves limits.
Let $f_Y : P \to \Stab_Y(x_Y), f_Z : P \to \Stab_Z(x_Z)$ such that the following square
commutes:
\[\begin{tikzcd}
P \ar[r, "f_Y"] \ar[d, "f_Z" left] & \Stab_Y(x_Y) \ar[d] \\
\Stab_Z(x_Z) \ar[r] & A
\end{tikzcd}\]
This diagram can be extended to a commutative diagram:
\[\begin{tikzcd}
P \ar[r, "f_Y"] \ar[d, "f_Z" left] & \Stab_Y(x_Y) \ar[d] \ar[rdd, bend left] & \\
\Stab_Z(x_Z) \ar[r] \ar[rrd, bend right] & A \ar[rd, "x"] & \\ & &
Y \times_W Z
\end{tikzcd}\]
This, in turn, gives a unique map $P \to \Stab_{Y \times_W Z}(x)$ by the universal property
of $\Stab_{Y \times_W Z}(x) = \lim\br{A \to Y \times_W Z \ot A}$, yielding the result.
\end{proof}

\begin{thm}\label{thm:algebraicity}
Let $R$ be a ring and
$(\s{C}, \tau) = (\CRing^{\heartsuit, \op}_{R/}, \mathrm{fppf})$ and let
$\AlgSt_{\s{C}, \tau}$ be the full subcategory of Artin stacks.
Let $X \to \Spec(R) \in \St_{\s{C}}^1$ satisfy the following:
\begin{enumerate}[label=(\roman*), itemsep=0pt]
\item $X$ is in $\AlgSt_{\s{C}} = \AlgSt_{\s{C}, \tau}$.
\item $X$ is proper, flat, of finite presentation, and has affine stabilizers.
\item $\mc{M}_{\Vect(X)}$ is in $\AlgSt_\s{C}$.
\item $\mc{M}_{\Vect(X)}$ is quasi-separated.
\item $\mc{M}_{\Vect(X)}$ is locally of finite presentation.
\end{enumerate}
Then, for any finite simplicial set $I$, the following hold:
\begin{enumerate}[label=(\roman*), itemsep=0pt]
\item $\mc{M}_{\Vect(X), I} \in \AlgSt_{\s{C}}$.
\item $\mc{M}_{\Vect(X), I}$ is locally of finite presentation and quasi-separated.
\item If $X \times X$ and $\mc{E}_X^\vee \boxtimes \mc{E}_X$ have affine diagonal maps, and
$\mc{M}_{\Vect(X)}$ has affine stabilizers,
then $\mc{M}_{\Vect(X), I}$ has affine stabilisers.
\end{enumerate}
\end{thm}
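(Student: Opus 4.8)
The plan is to reduce everything to \Cref{lem:alg-quiv-bun-mod}, \Cref{lem:affine-stabilizers}, and standard closure properties of Artin stacks under finite limits, so that the bulk of the work is checking that the three ``input'' stacks $\mc{M}_{\Vect(X)}$, $\Map(X, X \times X)$, and $\Map(X, \mc{E}_X^\vee \boxtimes \mc{E}_X)$ satisfy the relevant properties under hypotheses (i)--(v). First I would record that, by \Cref{lem:alg-quiv-bun-mod}, it suffices to show $\mc{M}_{\Vect(X)}$, $\Map(X, X \times X)$, and $\Map(X, \mc{E}_X^\vee \boxtimes \mc{E}_X) \in \AlgSt_\s{C}$; conclusion (i) is then immediate from hypothesis (iii) together with an algebraicity statement for mapping stacks. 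For the two mapping stacks, the key tool is the representability of Hom/mapping stacks $\Map(X, Y)$ when $X \to \Spec R$ is proper, flat, and of finite presentation and $Y$ is a (quasi-separated, locally finitely presented) Artin stack with affine stabilizers --- this is exactly hypothesis (ii) on $X$, applied with $Y = X \times X$ (Artin by (i), since $\AlgSt_\s{C}$ is closed under pullbacks and contains $\Spec R$) and with $Y = \mc{E}_X^\vee \boxtimes \mc{E}_X$ (Artin because it is the relative spectrum of a symmetric algebra over $B_X^2 = (\mc{M}_{\Vect(X)} \times X)^2$, hence affine over an Artin stack by (i), (iii)). I would cite \cite{HLP23} or \cite{AK22} for the precise mapping-stack algebraicity statement and note that its hypotheses are met here.

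Next, for conclusion (ii), I would propagate ``locally of finite presentation'' and ``quasi-separated'' through the construction. Both mapping stacks inherit these from $X$ being of finite presentation and from $X \times X$, $\mc{E}_X^\vee \boxtimes \mc{E}_X$ being locally of finite presentation and quasi-separated --- the former holds by (i), (iii) and finite presentation of the symmetric-algebra construction; the latter needs $X \times X$ and the vector-bundle total space to be quasi-separated, which follows from (iv) (plus $X$ quasi-separated, a consequence of (ii)) and the fact that affine morphisms are separated. Since $\mc{M}_{\Vect(X), I}$ is built from $\mc{M}_{\Vect(X)}$ and $\mc{M}_{\Vect(X), 1}$ by \emph{finite} limits, and finite limits of Artin stacks that are locally of finite presentation and quasi-separated remain so (these properties are stable under composition and base change, and a finite limit is a finite iteration of fibre products over such stacks), conclusion (ii) follows by induction on the cells of $I$.

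For conclusion (iii), the point is to track affine stabilizers through the same finite-limit presentation using \Cref{lem:affine-stabilizers}. The hypotheses give that $X \times X$ and $\mc{E}_X^\vee \boxtimes \mc{E}_X$ have affine diagonals --- hence affine stabilizers, since the stabilizer of a point is a base change of the diagonal --- and that $\mc{M}_{\Vect(X)}$ has affine stabilizers. The mapping stacks $\Map(X, Y)$ then have affine stabilizers when $Y$ does: a point of $\Map(X, Y)$ over $A$ is a map $A \times X \to Y$, and its stabilizer is the Weil restriction along the proper flat finitely presented map $A \times X \to A$ of the pullback to $A \times X$ of $\Stab_Y$, which is affine over $A \times X$; a Weil restriction of an affine scheme along a proper flat finitely presented morphism is again affine, so $\Stab_{\Map(X,Y)}$ is affine over $A$. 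Applying this to $Y = X \times X$ and $Y = \mc{E}_X^\vee \boxtimes \mc{E}_X$, we get that all three input stacks have affine stabilizers; then \Cref{lem:affine-stabilizers}, applied repeatedly, shows that every fibre product in the finite-limit presentation of $\mc{M}_{\Vect(X), I}$ has affine stabilizers, giving (iii).

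The main obstacle I anticipate is the algebraicity and good behaviour of the mapping stacks $\Map(X, \mc{E}_X^\vee \boxtimes \mc{E}_X)$ and $\Map(X, X \times X)$ --- specifically, pinning down the exact hypotheses under which $\Map(X,Y)$ is Artin, locally of finite presentation, quasi-separated, and has affine stabilizers, and verifying that the target stacks here meet them. Everything else (closure of $\AlgSt_\s{C}$ and of the auxiliary properties under finite limits, the stabilizer bookkeeping via \Cref{lem:affine-stabilizers}) is formal once that input is in hand; the affine-stabilizer claim for $\Map(X,Y)$ via Weil restriction is the one spot where I would need to be careful that a Weil restriction of an affine morphism along a proper flat finitely presented morphism stays affine, and I would cite the relevant representability result rather than reprove it.
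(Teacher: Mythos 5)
Your proposal is essentially the paper's proof: reduce via \cref{lem:alg-quiv-bun-mod} to the algebraicity of $\mc{M}_{\Vect(X)}$, $\Map(X, X \times X)$ and $\Map(X, \mc{E}_X^\vee \boxtimes \mc{E}_X)$, get the two mapping stacks from a Hall--Rydh-type theorem (the paper uses \cite[Theorem 1.2]{HR19}, parts (i)--(iii) for algebraicity, finite presentation/quasi-separatedness, and affine stabilizers respectively), and then propagate everything through the finite-limit presentation of $\mc{M}_{\Vect(X), I}$, using \cref{lem:affine-stabilizers} for the stabilizer bookkeeping. Two points deserve attention. First, the actual substance of the paper's argument for (i) and (ii) is the verification that the \emph{targets} meet the mapping-stack theorem's hypotheses: $X \times X$ is of finite presentation, quasi-separated, and has affine stabilizers by \cref{lem:affine-stabilizers}; and $\mc{E}_X^\vee \boxtimes \mc{E}_X$, being the relative spectrum of the symmetric algebra of a finite locally free sheaf over $B = \mc{M}_{\Vect(X)} \times X \times \mc{M}_{\Vect(X)} \times X$, is algebraic (affine over an algebraic stack), quasi-separated, locally of finite presentation, and has trivial stabilizers since its functor of points is set-valued. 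You gesture at these checks ("hypotheses are met here") but they are exactly where the work lies, and the affine-stabilizer check on the targets is needed already for the algebraicity clause, not only for (iii).

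Second, in (iii) you genuinely deviate: the paper simply cites \cite[Theorem 1.2(iii)]{HR19} (affine diagonal of the target implies affine stabilizers of the Hom stack), whereas you reprove this by a Weil-restriction argument. The idea is sound, and the representability input (Weil restriction of an affine morphism along a proper, flat, finitely presented morphism is affine) is available, but as written your reduction uses only \emph{affine stabilizers} of the target, and that is not enough. With the paper's definition $\Stab_V(x) = A \times_V A$, the stabilizer of a point of $\Map(X, Y)$ corresponding to $g : A \times X \to Y$ is the Weil restriction along $A \times A \times X \to A \times A$ of the pullback of the diagonal $\Delta_Y$ along the map $A \times A \times X \to Y \times Y$ built from $g$ in the two factors; this map does not factor through "stabilizer-type" data of a single point of $Y$, so affineness of that pullback is precisely the affine-diagonal hypothesis on $X \times X$ and $\mc{E}_X^\vee \boxtimes \mc{E}_X$ that statement (iii) supplies. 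Rephrase the step using the affine diagonal (or just cite the Hom-stack theorem as the paper does) and your argument goes through; after that, the finite-limit bookkeeping via \cref{lem:affine-stabilizers}, including the trivial stabilizers of $\mathrm{pt}$, matches the paper.
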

\begin{proof}
We first notice that $X \times X$ is of finite presentation and hence, in
particular, locally of finite presentation and quasi-separated. In addition,
it has affine stabilizers by \cref{lem:affine-stabilizers}, as $X$ does by
hypothesis.
Denoting $\mc{E} := \mc{E}_X^\vee \boxtimes \mc{E}_X$ and
$B := \mc{M}_{\Vect(X)} \times X \times \mc{M}_{\Vect(X)} \times X$,
$\mc{E}$ being affine over the quasi-separated algebraic stack $B$
implies the following:
\begin{enumerate}[label=(\roman*), itemsep=0pt]
\item $\mc{E}$ is algebraic as the morphism $p : \mc{E} \to B$ is affine, and hence
representable by algebraic spaces, with algebraic codomain.\footnote{To see this, first observe that
we can pull back the atlas of $B$ to $\mc{E}$ which will be an atlas since $\mc{E} \to B$ is
representable by algebraic spaces.
Next, consider the composite
$\mc{E} \to[\Delta_{\mc{E}}] \mc{E} \times \mc{E} \to[\id \times p] \mc{E} \times B$ which is
$\mc{E} \to[{(\id, p)}] \mc{E} \times B$. Applying \cite[Lemma 6.8(3)]{CW17} twice, we can deduce
that the diagonal of $\id \times p$ is representable, and we can verify by a diagram chase that
$(\id, p)$ is representable by algebraic spaces.
This implies $\mc{E} \to[\Delta] \mc{E} \times \mc{E}$ is schematic by
\cite[Lemma 6.8(4)]{CW17}.}
\item $\mc{E} \to B$ is quasi-separated as an affine morphism is
quasi-separated by
\cite[\href{https://stacks.math.columbia.edu/tag/01S7}{Tag 01S7}]{stacks-project}.
Therefore, it is quasi-separated over $\Spec(R)$ as the composite
$\mc{E} \to B \to \Spec(R)$ of quasi-separated morphisms quasi-separated.
\end{enumerate}
Next, $\mc{E}$ being the relative spectrum of the symmetric algebra of a finite
locally free sheaf over $B$ implies the following:
\begin{enumerate}[label=(\roman*), itemsep=0pt]
\item $\mc{E}$ has trivial stabilizers as its functor of points is valued in $\Set$.
\item $\mc{E}$ is locally of finite presentation over $B$ and hence over
$\Spec(R)$, since $B$ itself is locally of finite presentation over $\Spec(R)$ as it
is a product of stacks locally of finite presentation.
\end{enumerate}
These observations allow us to apply
\cite[Theorem 1.2(i)]{HR19} to deduce that the mapping stacks
$\Map(X, X \times X)$ and $\Map(X, \mc{E}_X^\vee \boxtimes \mc{E}_X)$ are in
$\AlgSt_{\s{C}}$. Then, \cref{lem:alg-quiv-bun-mod} implies (i).

For (ii), we observe that the properties involved are preserved by
pullback and hence finite limits of stacks satisfying these two properties have them. So,
it suffices to verify that the mapping stacks before have these properties which follows
by applying \cite[Theorem 1.2(ii)]{HR19}.

For (iii), we first prove that
$\Map(X, X \times X), \Map(X, \mc{E}_X^\vee \boxtimes \mc{E}_X)$ have affine stabilizers.
To do so, it suffices to show that $X \times X$ and $\mc{E}_X^\vee \boxtimes \mc{E}_X$
have affine diagonals by \cite[Theorem 1.2(iii)]{HR19}, but this is hypothesised in
(iii) above. Furthermore, the stack $\mathrm{pt}$ is an affine scheme
and hence has trivial stabilizers. By \cref{lem:affine-stabilizers}, it follows that
$\mc{M}_{\Vect(X), 1}$ has affine stabilizers.
Again, since $\mc{M}_{\Vect(X), I}$ is a finite limit computed using pullbacks involving
the terminal object, $\mc{M}_{\Vect(X)}$ and $\mc{M}_{\Vect(X), 1}$, which all have
affine stabilizers, \cref{lem:affine-stabilizers} implies that $\mc{M}_{\Vect(X), I}$
has affine stabilizers.
\end{proof}

\begin{cor}
If $X$ is a projective variety over a field, then $\mc{M}_{\Vect(X), I}$ is algebraic
for all finite simplicial sets $I$.
\end{cor}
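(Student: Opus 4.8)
The plan is to deduce this directly from \cref{thm:algebraicity}, applied with $R = k$ the ground field, so that $(\s{C},\tau) = (\CRing^{\heartsuit,\op}_{k/}, \mathrm{fppf})$ and $X \to \Spec(k)$ is the structure morphism of the projective variety. It then suffices to verify the five hypotheses (i)--(v) of that theorem. The conditions on $X$ are immediate: a projective variety over $k$ is a quasi-compact separated scheme of finite type over $k$, hence in particular an Artin stack, which is (i); projectivity gives properness, finite type over a field gives finite presentation, any scheme over a field is flat over it, and a scheme has trivial --- hence affine --- stabilizers, so (ii) holds as well.

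The genuine content is conditions (iii)--(v): that $\mc{M}_{\Vect(X)}$ --- which by construction is the usual moduli stack of vector bundles on $X$, cf. the discussion following its definition and \cite[Definition 10.7 and Construction 9.1]{AK22} --- is an Artin stack that is quasi-separated and locally of finite presentation over $k$. This is classical when $X$ is projective over a field: $\mc{M}_{\Vect(X)}$ is the open substack of the stack of $k$-flat coherent sheaves on $X$ cut out by the finite-locally-free locus, and the algebraicity, quasi-separatedness, and local finite presentation of that stack of coherent sheaves are standard (via Quot schemes, or as recorded in \cite{AK22}); alternatively one may start from the corresponding well-known properties of the moduli stack of vector bundles over a point --- that is, of $\coprod_{n \ge 0} B\mathrm{GL}_n$ --- and invoke the mapping-stack results of \cite{HR19} exactly as in the proof of \cref{thm:algebraicity}. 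Granting (i)--(v), part (i) of \cref{thm:algebraicity} yields $\mc{M}_{\Vect(X), I} \in \AlgSt_{\s{C}}$ for every finite simplicial set $I$, which is the assertion; part (ii) of the same theorem further shows $\mc{M}_{\Vect(X), I}$ is quasi-separated and locally of finite presentation.

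If one also wants the affine-stabilizers refinement, one checks the extra hypotheses of part (iii) of \cref{thm:algebraicity}: $X \times_k X$ is a separated scheme, hence has affine (indeed closed-immersion) diagonal; $\mc{E}_X^\vee \boxtimes \mc{E}_X$ is a relative spectrum of a symmetric algebra over a product of algebraic stacks, hence affine over its base and so has affine diagonal; and the stabilizer of a point of $\mc{M}_{\Vect(X)}$ is the automorphism group scheme of a family of vector bundles, which is the unit group scheme of a sheaf of finite-rank endomorphism algebras and is therefore affine. The main obstacle in the whole argument is simply the bookkeeping needed to match the classical moduli stack of vector bundles (equivalently, of coherent sheaves) on a projective variety with the object $\mc{M}_{\Vect(X)}$ produced by the assumptions of \cref{sec:Context}; beyond that, everything is a formal consequence of \cref{thm:algebraicity}.
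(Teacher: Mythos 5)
Your proposal is correct and follows essentially the same route as the paper: both deduce the corollary from \cref{thm:algebraicity}, with the only real content being the classical facts that $\mc{M}_{\Vect(X)}$ is an Artin stack, quasi-separated, and locally of finite presentation when $X$ is projective over a field (the paper cites \cite[Theorem 10.20]{AK22} and \cite[Theorem 7.10]{CW17} for exactly this, leaving the trivial hypotheses on $X$ implicit). Your additional check of the affine-stabilizer hypotheses goes beyond what the corollary claims but is consistent with the theorem's part (iii).
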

\begin{proof}
It suffices to observe that $\mc{M}_{\Vect(X)}$ is algebraic in this case by
\cite[Theorem 10.20]{AK22} and that $\mc{M}_{\Vect(X)}$ is locally of finite
presentation by \cite[Theorem 7.10]{CW17}.
\end{proof}

\begin{rmk}
To the best of our knowledge, the hypotheses of \cref{thm:algebraicity} are
not the most relaxed possible.
There are more general results available on algebraicity of mapping
stacks in both the classical \cite[Theorem 6.22]{AHR23} and the
derived settings \cite[Theorem 5.1.1]{HP19}. Furthermore, even more
relaxed hypotheses might be sufficient if we rephrase the stack
$\mc{M}_{\Vect(X), 1}$ as a certain Weil restrition as follows (this was
suggested to us by Daniel Halpern-Leistner at the
$2^{\mathrm{nd}}$ Simons Math Summer Workshop on ``Moduli'' in July 2024):
let $\mc{E}'$ denote the pullback of $\mc{E}_X^\vee \boxtimes \mc{E}_X$ to
$\mc{M}_{\Vect(X)} \times \mc{M}_{\Vect(X)} \times X$ along the map
$\id_{\mc{M}_{\Vect(X)}} \times \id_{\mc{M}_{\Vect(X)}} \times \Delta_X$,
then the Weil restriction of $\mc{E}'$ to
$\mc{M}_{\Vect(X)} \times \mc{M}_{\Vect(X)}$ via the product
projection has the same functor of points as $\mc{M}_{\Vect(X), 1}$, which
then allows us to examine the possibility of using algebraicity results
on Weil restrictions such as \cite[Theorem 5.1.14]{HP19}.
In future work, we will attempt to use such results to prove
algebraicity of moduli stacks of quiver bundles over more general
stacks.
We will not address any results about smoothness in this paper but
we plan to do so as well in future work.
\end{rmk}

\begin{rmk}[Meaning of categorification]
\label{rmk:meaning}
In the previous section, we constructed moduli stacks parametrizing the morphisms
involved in the moduli problem for vector bundles.
We have discussed how these morphism moduli stacks along with the original moduli
stacks form internal categories.
In this section, we have
used that stack to construct an algebraic moduli stack parametrizing vector bundle
diagrams of any fixed finite shape.
In fact, given a simplicial set $I$, we can consider the
moduli stack $\mc{M}_{\Vect(X), I \times \Delta^1}$ to be the moduli stack of morphisms
of quiver bundles of shape $I$ and it is not much different to show that this forms
an internal category. It is in this sense, that we have categorified the
moduli problem of vector bundles as well as quiver bundles. That is,
we have produced a notion of a category stack parametrizing these objects and their
morphisms as opposed to just a single stack parametrizing the objects.
\end{rmk}

\section{Quiver Varieties}
\label{sec:QuiverVarieties}

It is imperative from our title that we discuss how to recover
quiver varieties and Nakajima quiver varieities from our framework and we now provide
a sketch to accomplish this.
We emphasize that some of the statements made in this section are
conjectural and we hope we have made it clear when it is. We intend to complete this
work in a future paper.

\subsection{Quiver Stacks and Quiver Varieties}

Pick any quiver $I$ in the usual sense of quiver theory ---
this is a simplicial set with no $2$--simplices. Assuming we are working over a base
ring $k$, take for the base space $X$, the point $\Spec(k)$. Then,
vector bundles over $X$ are the affine schemes over $k$ and
$\mc{M}_{\Vect(X), I}$ is the representation stack $\mathrm{Rep}(I)$.
The difference is that for constructing the classical representation space,
one needs to choose
dimension vectors in order to write down the space as a direct sum of vector space
Hom's. The stack $\mc{M}_{\Vect(X), I}$ parametrizes all finite dimensional
representations irrespective of dimension vectors.
One might then try to translate
the group action used in quiver theory to construct quiver varieities to a group action
on this stack and then take a GIT quotient as the quiver variety.
However, we will phrase the construction using simplicial language.
That is, we will construct a different simplicial set $\tilde{I}$ whose
representations in $\Vect(\Spec(k))$ correspond to triples $(g, \rho, \psi)$, where
$g$ is an element of the automorphism group of representations of $I$, and $\rho$ and
$\psi$ are representations of $I$, such that $g \cdot \rho = \psi$. Roughly speaking,
$\tilde{I}$ will contain two copies of $I$ joined at the corresponding vertices with
a pair of oppositely directed edges and a pair of $2$--simplices encoding that they are
isomorphisms. For each edge $e \in I$, denoting its second copy in $\tilde{I}$ as $e'$,
and the two edges $g_{0, e} : d_0e \to d_0e'$ and $g_{1, e'} : d_1e \to d_1e'$ in
$\tilde{I}$, we will have two $2$--simplices encoding the relation
$g_{1, e}e = e'g_{0, e}$.
If the quiver $I$ has framing nodes, we have to be slightly more careful.

We now give a more explicit construction. First, framing nodes can be encoded as a
function $f : I_0 \to \set{0, 1}$, where $f^{-1}(0)$ are the usual nodes and
$f^{-1}(1)$ are the framing nodes.  The vertex set
of $\tilde{I}$ is then defined to be:
\[
\tilde{I}_0 := f^{-1}(0) \amalg f^{-1}(0) \amalg f^{-1}(1)
\]
which has two copies of each regular node of $I$ and one copy of each framing node.
To describe the edge
set, first consider the set $S := (fd_0, fd_1)^{-1}(1, 1)$ of edges whose source
and target are both framing nodes, and the set $I_1 \setminus S$ of edges which has
a source or a target that is not a framing node. For each regular node $v \in f^{-1}(0)$,
we consider symbols $g_v, g_v^{-1}$, and define
\[ T := \set[g_v]{v \in f^{-1}(0)} \amalg \set[g_v^{-1}]{v \in f^{-1}(0)} \]
These are to
be thought of as isomorphism edges going from one copy of $v$ to another in $\tilde{I}$.
For $e \in I_1 \setminus S$, we also consider tuples $(g_{d_1(e)}, e)$ and think of
them as the composite $g_{d_1(e)} \circ e$. We write:
\[
T' := \set[{(g_v, e)}]{e \in I_1 \setminus S, v = d_1(e)}
\]
Then, we define the edge set to be:
\[
\tilde{I}_1 := (I_1 \setminus S) \amalg (I_1 \setminus S) \amalg S \amalg T \amalg T'
\]
where the two copies of $I_1 \setminus S$ consist of two copies of each edge in $I$
incident to a regular node, $S$ consists of one copy of each edge incident to
framing nodes, and $T$ consists of the isomorphism edges connecting the two copies
of each regular node, $T'$ consists of composites of edges between regular nodes
and isomorphism edges. We leave it to the reader to define the face and degeneracy maps
to make these ideas precise.

Next, we describe the $2$--simplices of $\tilde{I}$ which impose the necessary relations.
For each $v \in f^{-1}(0)$, denoting its second copy in $\tilde{I}$ as $v'$,
we consider $2$--simplices of the form:
\[
\alpha_v := \begin{tikzcd}
& v \ar[ld, equal] \ar[rd, "g_v"] & \\
v & & v' \ar[ll, "g_v^{-1}"]
\end{tikzcd} \hspace{5em}
\beta_v := \begin{tikzcd}
& v' \ar[ld, "g_v^{-1}" above left] \ar[rd, equal] & \\
v \ar[rr, "g_v" below] & & v'
\end{tikzcd}\]
For $e \in (fd_0, fd_1)^{-1}(0, 0) \subset I_1 \setminus S$ --- that is, an edge
incident to no framing nodes ---
we denote its second copy in $\tilde{I}_1$ as $e'$, and
we consider $2$--simplices that glue to squares of the form:
\[ \gamma_e := \begin{tikzcd}[column sep=huge, row sep=huge]
d_0(e) \ar[r, "e"] \ar[d, "g_{d_0(e)}" left] \ar[rd, "{(g_{d_1(e)}, e)}" description] &
d_1(e) \ar[d, "g_{d_1(e)}"] \\
d_0(e') \ar[r, "e'" below] & d_1(e')
\end{tikzcd}\]
We call the upper right $2$--simplex $\alpha_e$ and the lower left $2$--simplex
$\beta_e$.

For $e \in (fd_0, fd_1)^{-1}(0, 1) \subset I_1 \setminus S$ --- that is,
an edge with a framing node as a target --- we again denote its second copy in $\tilde{I}$
as $e'$, and consider a $2$--simplex of the form:
\[ \delta_e := \begin{tikzcd}
d_0(e) \ar[rd, "e" above right] \ar[dd, "g_{d_{0}(e)}" left] & \\
& d_1(e) = d_1(e') \\
d_0(e') \ar[ru, "e'" below right] &
\end{tikzcd}\]
For $e \in (fd_0, fd_1)^{-1}(1, 0)$, we consider a similar simplex:
\[ \epsilon_e := \begin{tikzcd}
& d_1(e) \ar[dd, "g_{d_{1}(e)}"]  \\
d_0(e) = d_0(e') \ar[ru, "e" above left] \ar[rd, "e'" below left] &  \\
& d_1(e')
\end{tikzcd}
\]
We then define:
\begin{align*}
\tilde{I}_2 :=& \set[\alpha_v]{v \in f^{-1}(0)} \amalg \set[\beta_v]{v \in f^{-1}(0)} \\
\amalg& \set[\gamma_e]{e \in (fd_0, fd_1)^{-1}(0, 0)} \\
\amalg& \set[\delta_e]{e \in (fd_0, fd_1)^{-1}(0, 1)}
\amalg \set[\epsilon_e]{e \in (fd_0, fd_1)^{-1}(1, 0)}
\end{align*}
The face and degeneracy maps are the obvious ones. Finally, we define the higher
$\tilde{I}_n, n > 2$ to only consist of degenerate simplices. A point to note here
is that this construction of $\tilde{I}$ is only simplicial, and highly non-categorical.
In particular, $\tilde{I}$ does not have simplices to witness composites of all
composeable chains of arrows, it only has enough $2$--simplices to encode the equivalence
relation given by the group action on the representation space of $I$. However, we note
that quiver theory requires this non-categorical aspect: the quiver $I$ we begin with
is not a category and its representations are not functors, but maps of simplicial sets
$I \to \Vect(\Spec(k))$; furthermore, the equivalence relation given
by the group action on representations selectively takes into account composites of
arrows.

\begin{exm}
Let $I$ be the graph:
\[\begin{tikzcd}
a \ar[r, "e_{ab}"] & b \ar[r, "e_{bc}"] & c
\end{tikzcd}\]
where $a, b$ are regular nodes --- $f(a) = 0 = f(b)$ --- and $c$ is a framing node ---
$f(c) = 1$. Then, $\tilde{I}$ is a simplicial set of the form (not a category!):
\[\begin{tikzcd}[row sep=small, column sep=huge]
a \ar[r, "e_{ab}"] \ar[dd, shift right, "g_a" left] &
b \ar[dd, shift right, "g_b" left] \ar[rd, "e_{bc}" above right] \\
& & c \\
a' \ar[r, "e_{ab}'" below] \ar[uu, shift right, "g_a^{-1}" right] &
b' \ar[uu, shift right, "g_b^{-1}" right] \ar[ru, "e_{bc}'" below right] &
\end{tikzcd}\]
where we have $2$--simplices witnessing the identities
$g_a^{-1}g_a = \id_a$, $g_b^{-1}g_b = \id_b$, $g_ag_a^{-1} = \id_{a'}$,
$g_bg_b^{-1} = \id_{b'}$, $g_be_{ab} = e'_{ab}g_a$,
$e_{bc}'g_b = e_{bc}$, but no $2$--simplex witnessing a composite
$e_{bc}e_{ab}$, for example.
\end{exm}

\begin{rmk}
 It is likely that there is a more ``invariant'' way to construct the graph $\tilde{I}$
or another one categorically equivalent to it (note that categorical equivalence makes
sense for simplicial sets and not just quasicategories) as a colimit involving
$I$ and its simplicial subsets, so that its representations encode the group
action in a similar way. However, we will not pursue this idea for now.
\end{rmk}

With this setup, we observe that we have two obvious inclusions
$\iota, \kappa : I \to \tilde{I}$ and we can define the moduli stack of quiver
representations to be the $(2, 1)$--coequalizer
\[
\mathrm{Quiver}(I) := \colim\left(\begin{tikzcd}[column sep=huge]
\mc{M}_{\Vect(X), \tilde{I}}
    \ar[r, shift left, "\mc{M}_{\Vect(X), \iota}"]
    \ar[r, shift right, "\mc{M}_{\Vect(X), \kappa}" below] &
\mc{M}_{\Vect(X), I}
\end{tikzcd}\right)
\]
Then, one can hope to apply ``Beyond GIT'' in the sense of
\cites{HL22, AHLH24} to recover quiver varieties in the usual sense, but these
will be quiver varieties with respect to all choices of dimension vectors, and hence
a disjoint union of quiver varieities for each choice of dimension vector.

\subsection{Nakajima Quiver Varieties}

Next, we would like to recover Nakajima quiver varieties, as originally introduced in \cite{MR1302318}, out of our framework. The first
step towards this is to add framing nodes for each vertex. We accomplish this by first
gluing a copy of the graph:
\[\begin{tikzcd}
0 \ar[r] & 1
\end{tikzcd}\]
to $I$ at each vertex $v$, where $v$ and $0$ are identified. This can be realized as
the strict pushout of simplicial sets:
\[\begin{tikzcd}
\mathrm{sk}_0I \ar[r] \ar[d] & I \ar[d] \\
\coprod_{v \in I_0} (0 \to 1) \ar[r] &
I^{\mathrm{fr}} \ar[lu, phantom, "\ulcorner" very near start]
\end{tikzcd}\]
where $\mathrm{sk}_0I$ is the $0$--skeleton of $I$ or the discrete simplicial set
consisting of the vertices of $I$, and the left vertical map sends a vertex
$v$ to the source vertex of the copy of $0 \to 1$ corresponding to $v$.
The next step is to double the edges, as they
say in quiver theory.
Given a quiver $J$, we take its double to be the strict pushout:
\[\begin{tikzcd}
\mathrm{sk}_{0}J \simeq \mathrm{sk}_{0}J^\op \ar[r] \ar[d] &
J \ar[d] \\
J^\op \ar[r] & d(J) \ar[lu, phantom, "\ulcorner" very near start]
\end{tikzcd}\]
Indeed, this takes a copy of $J$ and a copy of $J^\op$ and glues them together
at the vertex set. We are then interested in the representations of
$d(I^\mathrm{fr})$ but those in the zero locus of the natural moment maps or so-called preprojective conditions\footnote{If $x\in\mbox{Hom}(\mathbb C^s,\mathbb C^t)$ is a representation of an arrow in $J$ and if $y\in\mbox{Hom}(\mathbb C^t,\mathbb C^s)$ is a representation of its reversed-orientation counterpart in $J^\op$, then there is an identification of $y$ with a unique element in $T^*_x\mbox{Hom}(\mathbb C^s,\mathbb C^t)$, coming from the trace pairing, where $T^*_x$ refers to the (algebraic) cotangent space at $x$. Working over $\bC$, these pairings allow us to find within the Nakajima quiver variety an open dense subset that may be identified with the (geometric) cotangent bundle of the ordinary quiver variety (associated to $J$), and the moment map conditions equip the Nakajima quiver variety with quaternionically-commuting complex structures and corresponding symplectic forms, all of which are compatible with a metric inherited by the quotient (cf. \cite{HKLR87,MR1302318}).  This package of differential-geometric data is the so-called hyperk\"ahler structure.  One of these complex structures extends the one on the cotangent bundle uniquely. This feature is essential in many of the applications of Nakajima quiver varieties --- for example, in integrable systems \cite{RS20}.  Finally, it is worth remarking that satisfying the moment map conditions is the GIT stability condition for this moduli problem.}.
Moment maps are written using composition, direct sum,
addition, and scaling (by constants) of linear maps indexed by
arrows in the quiver whose
representation we are considering. While it is easy to handle composition in our language,
it is not clear how one should formalize direct sums, addition, subtraction, scaling.
As a start, we can treat direct sums in much the same way as the map $\otimes_1$ in
\cref{guess:symm-mon-moduli}, giving a map:
\[
\oplus_1 : \mc{M}_{\Vect(X), 1} \times \mc{M}_{\Vect{X}, 1} \to \mc{M}_{\Vect(X), 1}
\]
For the rest, we can
first observe that, even when $X$ is a stack over $k$, for each $\lambda \in k$,
there are maps:
\[
s_\lambda : \mc{E}_X^\vee \boxtimes \mc{E}_X \to \mc{E}_X^\vee \boxtimes \mc{E}_X
\]
that, viewing the domain and codomain as the respective sheaves of $k$--modules,
scales a section by $\lambda$, and
\[
+ : \mc{E}_X^\vee \boxtimes \mc{E}_X \oplus \mc{E}_X^\vee \boxtimes \mc{E}_X \to
\mc{E}_X^\vee \boxtimes \mc{E}_X
\]
that sends two sections to their sum. Some formal arguments are then necessary to show
that these yield maps of moduli stacks:
\[
s_\lambda : \mc{M}_{\Vect(X), 1} \to \mc{M}_{\Vect(X), 1}
\]
that scales a morphism of vector bundles over $X$ by $\lambda$,
and for each graph of the forms:
\[J = \begin{tikzcd}
\cdot \ar[r, "a", bend left] \ar[r, "b" below, bend right] & \cdot
\end{tikzcd} \text{ or } J = \begin{tikzcd}
\cdot \arrow[loop left]{l}{a} \arrow[loop right]{r}{b}
\end{tikzcd}\]
a map:
\[
+ : \mc{M}_{\Vect(X), J} \to \mc{M}_{\Vect(X), 1}
\]
that sends the morphisms indexed by $a$ and $b$ to their sum. One can then see that
that this is enough to write down moment maps using only the moduli stacks we have
constructed, by taking the various projections
$\mc{M}_{\Vect(X), d(I^\mathrm{fr})} \to \mc{M}_{\Vect(X), K}$ for various choices
of simplicial subsets $K \subset I$ along with the maps $\oplus_1, s_\lambda, +$
discussed above and the composition map $c$ of \cref{thm:arrow_moduli}.
One then argues that moment maps will be of the form
\[
\mu : \mc{M}_{\Vect(X), d(I^\mathrm{fr})} \to \prod_{v \in I_0} \mc{M}_{\Vect(X), 1}
\]
We can compare this with moment maps in usual quiver theory ---
they take values in the endomorphism spaces of the vector spaces in a
representation. In our case, we for each vertex $v \in I_0$, a copy of
$\mc{M}_{\Vect(X), 1}$ which, in particular, also has the endomorphisms.

To take a zero locus, one first needs a map
\[
0 : \mathrm{pt} \to \prod_{v \in I_0} \mc{M}_{\Vect(X), 1}
\]
that picks out the zero maps of vector bundles. We claim that the zero
section of $\mc{E}_X^\vee \boxtimes \mc{E}_X$ can be used to construct such a map. The
zero locus of the moment maps is then simply the pullback of stacks:
\[\begin{tikzcd}
Z_\mu \ar[r] \ar[d] \ar[rd, phantom, "\lrcorner" very near start] &
\mc{M}_{\Vect(X), d(I^\mathrm{fr})} \ar[d, "\mu"] \\
\mathrm{pt} \ar[r, "0" below] &
\prod_{v \in I_0} \mc{M}_{\Vect(X), 1}
\end{tikzcd}\]
We then consider the following diagram:
\[\begin{tikzcd}[column sep=huge]
\mc{M}_{\Vect(X), \widetilde{d(I^\mathrm{fr})}}
    \ar[r, shift left, "\mc{M}_{\Vect(X), \iota}" above]
    \ar[r, shift right, "\mc{M}_{\Vect(X), \kappa}" below] &
\mc{M}_{\Vect(X), d(I^\mathrm{fr})} \ar[r, "!"] \ar[rd, "\mu" below left] &
\mathrm{Quiver}(d(I^\mathrm{fr})) \ar[d, dashed, "\mu'"] \\
& & \prod_{v \in I_0} \mc{M}_{\Vect(X), 1}
\end{tikzcd}\]
where, if we can show that
$\mu \circ \mc{M}_{\Vect(X), \iota} \simeq \mu \circ \mc{M}_{\Vect(X), \kappa}$, we
obtain a unique-up-to-2-isomorphism map
$\mu'$, shown as a dashed arrow, making the right-most triangle
commute. This is the analogue of the moment map
being invariant with respect to the group action whose quotient gives
the Nakajima quiver variety in the usual theory.
Here, of course, $\iota$ and $\kappa$ are the two inclusions
$d(I^\mathrm{fr}) \hto \widetilde{d(I^\mathrm{fr})}$ and $!$ is the canonical
map to the coequalizer.
Once we have this, since taking pullbacks in functorial, we get the following
commutative diagram:
\[\begin{tikzcd}
Z_\mu \ar[rr] \ar[dd] \ar[rd] & &
\mc{M}_{\Vect(X), d(I^\mathrm{fr})} \ar[dd, "\mu" near start] \ar[rd, "!"] & \\ &
Z_{\mu'} \ar[rr, crossing over] & &
\mathrm{Quiver}(d(I^\mathrm{fr})) \ar[dd, dashed, "\mu'"] \\
pt \ar[rr, "0" below, near end] \ar[rd, equal] & &
\prod_{v \in I_0} \mc{M}_{\Vect(X), 1} \ar[rd, equal] & \\ &
pt \ar[rr, "0" below] \ar[from=uu, crossing over] & & \prod_{v \in I_0} \mc{M}_{\Vect(X), 1}
\end{tikzcd}\]
We can then hope to show that $Z_{\mu'}$ is equivalent
as a stack to the classical Nakajima quiver variety associated to $I$.

\subsection{Quiver Mutations}

The constructions discussed here and
the construction of the moduli stack of Higgs bundles we give in the next section
can be described as
using the internal category structure of the moduli stacks $\mc{M}_{\Vect(X), 1}$
and $\mc{M}_{\Vect(X)}$ to construct a moduli stack of quiver bundles for a different
shape of quiver. That is, the integrability condition for Higgs bundles has a natural
formulation in terms of taking a quiver of one shape and constructing another
one using some operations available. This is reminiscent of quiver mutations
appearing in representation theory. It is natural to ask, given two quivers $I, J$
and a quiver mutation $\mu$ that takes $I$ to $J$, if we can associate a morphism of
stacks $\wh\mu : \mc{M}_{\Vect(X), I} \to \mc{M}_{\Vect(X), J}$. If quiver mutations
can be written down in terms of morphisms of simplicial sets, then we would get
a map in the opposite direction, by the fact that
$\mc{M}_{\Vect(X), -}$ is a functor
$\sSet^\op \to \St_{\sC}$ by definition. Whether or not quiver mutations can be expressed
in the language of simplicial maps, it might still be interesting to try and construct
maps associated to $\mu$ in either direction by hand.


\section{Higgs Bundles}
\label{sec:HiggsBundles}

We now discuss an application of the construction of the moduli stacks of
quiver bundles from \cref{sec:ModStQuiver}
in constructing a moduli stack parametrizing Higgs bundles and a moduli
stack parametrizing morphisms thereof --- one of the main motivations of this paper.
This is where we categorify the moduli problem for Higgs bundles in the sense
of \cref{rmk:meaning}.
We then discuss non-Abelian Hodge theory in this categorified setting.
For this section, we will only deal with $\St_{\CRing^{\heartsuit, \op}}^1$.

\subsection{Moduli Stack of Higgs Bundles}

We recall the definition
of a Higgs bundle for which we refer the reader to \cite[Example 9]{GR15} for details in a context that coincides fairly readily with our setting here. Let $X \in \St_{\CRing^{\heartsuit, \op}}^1$ and $K$, a locally free
$\mc{O}_X$--module equipped with an exterior product
$\wedge : K \otimes_{\mc{O}_X} K \to K \wedge K$.
A $K$--twisted Higgs bundle over $X$ is a pair $(E, \phi : E \to E \otimes_{\mc{O}_X} K)$
such that the following diagram commutes:
\begin{equation}\label{eqn:integrability}
\begin{tikzcd}
E \ar[r, "\phi"] \ar[dd] & E \otimes K \ar[d, "\phi \otimes \id_K"] \\
& E \otimes K \otimes K \ar[d, "\id_E \otimes \wedge"] \\
0 \ar[r] & E \otimes K \wedge K
\end{tikzcd}\end{equation}
We call the commutativity of this diagram the integrability condition.
We call $E$ the underlying bundle of the Higgs bundle and $\phi$, the Higgs field.
A morphism of $K$--twisted Higgs bundles $(E, \phi) \to (E', \phi')$ is a vector bundle map
$f : E \to E'$ making the following diagram commute:
\begin{equation}\label{eqn:Higgs-morphism}
\begin{tikzcd}
E \ar[r, "\phi"] \ar[d, "f" left] & E \otimes K \ar[d, "f \otimes \id_K"] \\
E' \ar[r, "\phi'" below] & E' \otimes K
\end{tikzcd}
\end{equation}

In other words, the data of a Higgs bundle amounts to a commutative diagram of the form
\ref{eqn:integrability}, from which we can extract the underlying bundle by taking the top-left
vertex and the Higgs field, by taking the top horizontal edge. Hence, the moduli
stack of Higgs bundles should be a substack of the moduli stack of quiver bundles of shape
\begin{equation}\label{eqn:integrability-base}
P := \begin{tikzcd}[row sep=small]
a \ar[r, "e_{ab}"] \ar[dd, "e_{ab'}" left] & b \ar[d, "e_{bc}"] \\
& c \ar[d, "e_{cd}"] \\
b' \ar[r, "e_{b'd}" below] & d
\end{tikzcd}
\end{equation}
satisfying:
\begin{enumerate}[label=(\roman*), itemsep=0pt]
\item The vertex indexed by $b$ is the one indexed by $a$ tensored with $E$.
\item The vertex indexed by $c$ is the one indexed by $b$ tensored with $K$
\item The vertex indexed by $d$ is the one indexed by $a$ tensored with $K \wedge K$
\item The vertex indexed by $b'$ is $0$.
\item The edge indexed by $e_{bc}$ is the one indexed by $e_{ab}$ tensored with
$\id_K$.
\item The edge indexed by $e_{cd}$ is the identity of vertex indexed by $a$ tensored
with $\wedge$.
\item The edges indexed by $e_{ab'}$ and $e_{b'd}$ are the unique maps.
\end{enumerate}
Notice that (iv), (v) and (vi) imply the rest.
We will use this observation to recover our desired substack as a pullback of maps out of
$\mc{M}_{\Vect(X), P}$ into some algebraic stacks,
because, that would yield algebraicity immediately, as long as the hypotheses
of \cref{thm:algebraicity} are satisfied.

There is a construction of the moduli stack of Higgs bundles given
in \cite[\S 7.4]{CW17} whose description of points partially motivates our construction.
A $U$--point of this moduli stack consists of a $pr_2^*K$--twisted Higgs bundle
$(E, \phi : E \to E \otimes_{\mc{O}_{U \times X}} pr_2^*K)$ where $pr_2 : U \times X \to X$
is the second projection --- note that pullback functors for sheaves are symmetric
monoidal, they preserve colimits as they are left adjoint to pushforward functors,
and hence, they preserve exterior algebra objects so that
the diagrams defining Higgs bundles make sense for $pr_2^*K$.
To construct our stack such that its points coincide with that of \cite[\S 7.4]{CW17},
we will need a few results to define the maps that will cut it out.

\begin{lem}\label{lem:pullback-of-maps-by-projection}
Consider the map $p_f : \mathrm{pt} \to \mc{M}_{\Vect(X), 1}$ corresponding to a triple
$(E, F, f : E \to F)$ over $X$.
Let $I \in \sSet$, $s : \Delta^1 \to I$ be a $1$--simplex and
$\mc{M}_{\Vect(X), s} : \mc{M}_{\Vect(X), I} \to \mc{M}_{\Vect(X), \Delta^1}
\simeq \mc{M}_{\Vect(X), 1}$, the map given by the functor of \cref{defn:mod-st-quiv-bun}
applied to $s$. Then, the pullback stack:
\[\begin{tikzcd}
\mc{M}_{\Vect(X), I, s} \ar[r] \ar[d] \ar[rd, phantom, "\lrcorner" very near start] &
\mc{M}_{\Vect(X), I} \ar[d, "\mc{M}_{\Vect(X), s}"] \\
\mathrm{pt} \ar[r, "p_f" below] & \mc{M}_{\Vect(X), 1}
\end{tikzcd}\]
has $U$--points corresponding to quiver bundles of shape $I$ whose edge given by $s$
is $pr_2^*f$, where $pr_2 : U \times X \to X$ is the second projection.
\end{lem}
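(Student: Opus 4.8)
The plan is to verify the claimed description on $U$--points for arbitrary $U$, exactly as was done for the maps $s,t,e,c$ in \cref{sec:ModStVec}; an equivalence of functors of points is then an equivalence of stacks. Three ingredients are used. First, $\Hom(U,-)$ preserves limits, in particular the pullback defining $\mc{M}_{\Vect(X), I, s}$. Second, $U$--points of $\mc{M}_{\Vect(X), I}$ are $I$--indexed diagrams of vector bundles over $U \times X$ (the corollary following \cref{defn:mod-st-quiv-bun}), and by construction $\mc{M}_{\Vect(X), I}$ is a limit over the category of simplices of $I$, with $\mc{M}_{\Vect(X), s}$ the projection onto the factor indexed by $s : \Delta^1 \to I$ --- that is, on $U$--points it restricts an $I$--diagram along $s$. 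Third, the bijection of \cref{thm:arrow_moduli}(i) is compatible with base change: precomposing a $V$--point of $\mc{M}_{\Vect(X),1}$ with a map $U \to V$ classifies the pullback of the corresponding triple along $U \times X \to V \times X$.

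First I would apply limit--preservation to the defining square, obtaining
\[
\mc{M}_{\Vect(X), I, s}(U) \;\simeq\; \mc{M}_{\Vect(X), I}(U) \times_{\mc{M}_{\Vect(X), 1}(U)} \mathrm{pt}(U),
\]
and since $\mathrm{pt}(U) \simeq \ast$ this is the fibre of $\mc{M}_{\Vect(X), s}(U) : \mc{M}_{\Vect(X), I}(U) \to \mc{M}_{\Vect(X), 1}(U)$ over the $U$--point obtained by precomposing $p_f$ with $U \to \mathrm{pt}$. By \cref{thm:arrow_moduli}(i) with $U = \mathrm{pt}$, the point $p_f$ classifies the triple $(E, F, f)$ over $\mathrm{pt} \times X \simeq X$; by the base--change compatibility above, its image under $U \to \mathrm{pt}$ classifies the pullback of this triple along the induced map $U \times X \to \mathrm{pt} \times X \simeq X$, which is exactly $pr_2 : U \times X \to X$. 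Hence that $U$--point is $(pr_2^*E, pr_2^*F, pr_2^*f)$ over $U \times X$.

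Next I would unwind $\mc{M}_{\Vect(X), s}(U)$: under the identification of $\mc{M}_{\Vect(X), I}(U)$ with $I$--diagrams over $U \times X$ and of $\mc{M}_{\Vect(X), 1}(U) \simeq \mc{M}_{\Vect(X), \Delta^1}(U)$ with triples (via \cref{thm:arrow_moduli}(i)--(ii)), it sends an $I$--diagram to the triple consisting of its value at the source of $s$, its value at the target of $s$, and the vector bundle map it assigns to the edge $s$. Combining with the previous paragraph, a $U$--point of $\mc{M}_{\Vect(X), I, s}$ is an $I$--diagram of vector bundles over $U \times X$ together with an identification of its restriction along $s$ with $(pr_2^*E, pr_2^*F, pr_2^*f)$; in particular its edge indexed by $s$ is $pr_2^*f$ (and its endpoints are $pr_2^*E$, $pr_2^*F$), which is the assertion. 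These identifications are natural in $U$ by construction, so they assemble into an equivalence of functors of points, hence of stacks.

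The step requiring the most care is the identification in the second paragraph --- that the $U$--point coming from $p_f$ is genuinely $(pr_2^*E, pr_2^*F, pr_2^*f)$ rather than some other base change. This rests on the compatibility of the bijection in \cref{thm:arrow_moduli}(i) with pullback along morphisms of test objects, which unwinds to the functoriality in the base built into the constructions of \cref{sec:Context}: the naturality of $\mc{V}_X$ in $X$, the pullback behaviour of the universal bundle $\mc{E}_X$, and the Cartesian--closed structure used to define $\Map(X,-)$. Making this bookkeeping precise is where the actual content lies; everything else is formal manipulation of the Yoneda embedding and of limits in $\St_\s{C}$.
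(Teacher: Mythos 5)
Your proposal is correct and takes essentially the same approach as the paper: pass to $U$--points, identify the fibre condition as the edge indexed by $s$ factoring through $p_f$, and recognize the resulting triple as $(pr_2^*E, pr_2^*F, pr_2^*f)$. The only step you defer --- the ``base change compatibility'' of the classification of points of $\mc{M}_{\Vect(X),1}$ --- is exactly what the paper's proof carries out explicitly: via the Cartesian--closed structure, the classifying map $q' : U \times X \to \mc{E}_X^\vee \boxtimes \mc{E}_X$ factors as $p_f' \circ pr_2$, so the section cutting out the triple is the $pr_2$--pullback of the one cutting out $(E,F,f)$, as you anticipated.
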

\begin{proof}
A $U$--point of this pullback stack is a $U$--point of $\mc{M}_{\Vect(X), I}$
that corresponds to a quiver bundle whose edge given by $s$ is a $U$--point
$q : U \to \mc{M}_{\Vect(X), 1}$ that factors through $p_f$.
$p_f$ corresponds to a map
$p_f' : X \simeq \mathrm{pt} \times X \to \mc{E}_X^\vee \boxtimes \mc{E}_X$ such
that the projection to $X \times X$ factors through the diagonal
$\Delta : X \to X \times X$. Then, $q : U \to \mc{M}_{\Vect(X), 1}$ corresponds to a map
$q' : U \times X \to \mc{E}_X^\vee \boxtimes \mc{E}_X$. We consider the projections
$q'_1, q'_3 : U \times X \to \mc{M}_{\Vect(X)}$ as we did in \cref{sec:ModStVec}.
This defines defines a section of
\[
(q'_1 \times \id_X, q'_3 \times \id_X)^*(\mc{E}_X^\vee \boxtimes \mc{E}_X)
\]
which corresponds to a triple $(E', F', f' : E' \to F')$ over $U \times X$.
Under the Cartesian closed structure of stacks,
$q$ factoring through $p_f$ corresponds to $q'$ factoring through $p_f'$ as shown below:
\[\begin{tikzcd}
U \times X \ar[r, "q'"] \ar[d, "! \times \id_X" left] \ar[rd, "pr_2"] &
\mc{E}_X^\vee \boxtimes \mc{E}_X \\
\mathrm{pt} \times X \ar[r, "\simeq" below] & X \ar[u, "p_f'" right] 
\end{tikzcd}\]
That is $q'_i = (p_f' \circ pr_2)_i$ for $i = 1, 3$.
This implies that the section
that defines $(E', F', f')$ is a section of
\[
((p_f' \circ pr_2)_1 \times \id_X, (p_f' \circ pr_2)_3 \times \id_X)^*
(\mc{E}_X^\vee \boxtimes \mc{E}_X)
\]
which implies that $(E', F', f')$ is actually of the form:
\[
(pr_2^*E, pr_2^*F, pr_2^*f : pr_2^*E \to pr_2^*F)
\]
\end{proof}

\begin{guess}\label{guess:symm-mon-moduli}
There exist maps
\begin{align*}
- \otimes_0 - &: \mc{M}_{\Vect(X)} \times \mc{M}_{\Vect(X)} \to \mc{M}_{\Vect(X)} \\
- \otimes_1 - &: \mc{M}_{\Vect(X), 1} \times \mc{M}_{\Vect(X), 1} \to \mc{M}_{\Vect(X), 1}
\end{align*}
such that
\begin{enumerate}[label=(\roman*), itemsep=0pt]
\item $\otimes_0$ sends $U$--points corresponding to two vector bundles
$E, F$ over $U \times X$ to a $U$--point corresponding to the vector bundle
$E \otimes_{\mc{O}_{U \times X}} F$.

\item $\otimes_1$ sends a $U$--point corresponding
to two triples
\[
(E, F, f : E \to F), (E', F', f' : E' \to F')
\]
to a $U$--point corresponding to the triple
\[
(E \otimes_{\mc{O}_{U \times X}} F, E' \otimes_{\mc{O}_{U \times X}} F',
f \otimes_{\mc{O}_{U \times X}} f' :
    E \otimes_{\mc{O}_{U \times X}} F \to E' \otimes_{\mc{O}_{U \times X}})
 \]

\item Without making it fully precise, we also guess that these two maps
make the internal category $(\mc{M}_{\Vect(X)}, \mc{M}_{\Vect(X), 1}, s, t, e, c)$
of \cref{thm:arrow_moduli}
into an internal symmetric monoidal category in $\St_{\CRing^{\heartsuit, \op}}^1$.
For precise definitions, we refer the reader to \cite[\S 1.3]{Ghi19}. In particular,
for each $U \in \St_{\CRing^{\heartsuit, \op}}^1$, the $U$--points of this internal category
form a symmetric monoidal category.
\end{enumerate}
\end{guess}
\begin{proof}[Proof sketch]
The map $\otimes_0$ is just the tensor product functor on the groupoid
$\mc{M}_{\Vect(X)}(U)$ of vector bundles over $X \times U$, for each affine scheme $U$.
Now, the objects of $\mc{M}_{\Vect(X), 1}(U)$ are tripes $(E, F, f)$ for
vectors bundles $E, F$ and a morphism of vector bundles $f : E \to F$ over $X \times U$.
We claim that a morphism $(E, F, f) \to (E', F', f')$ in
$\mc{M}_{\Vect(X), 1}(U)$ is a pair of vector bundle maps
$a : E \to E'$, $b : F \to F'$, making the following sqaure commute:
\[\begin{tikzcd}
E \ar[r, "f"] \ar[d, "a" left] & F \ar[d, "b"] \\
E' \ar[r, "f'" below] & F'
\end{tikzcd}\]
To see this, consider the maps
$q, q' : X \times U \to \mc{E}_{X}^\vee \boxtimes \mc{E}_X$ corresponding to the triples
$(E, F, f), (E', F', f')$ and the four maps
$q_1, q_2, q_1', q_2' : X \times U \to \mc{M}_{\Vect(X)}$ obtained by composing with the
two projections $\mc{E}_X^\vee \boxtimes \mc{E}_X \to \mc{M}_{\Vect(X)}$ respectively,
and observe that any natural transformation:
\[\begin{tikzcd}
X \times U \ar[r, bend left, "q"{name=U, above}] \ar[r, bend right, "q'"{name=D, below}] &
\mc{E}_X^\vee \boxtimes \mc{E}_X
\ar[from=U, to=D, "\alpha", Rightarrow, shorten <= 0.5em, shorten >= 0.5em]
\end{tikzcd}\]
gives two natural transformations
$\alpha_1 : q_1 \To q_1', \alpha_2 : q_2 \To q_2'$. This gives two vector bundle maps
$a : E \to E', b : F \to F'$. One then has to verify that the maps $f, f', a, b$ form
a commutative square as above. With this, the map
$\otimes_1 : \mc{M}_{\Vect(X), 1}(U) \times \mc{M}_{\Vect(X), 1} \to \mc{M}_{\Vect(X), 1}$
is given by:
\[\begin{tikzcd}
((E, F, f), (E', F', f')) \mapsto (E \otimes E, F \otimes F', f \otimes f')
\end{tikzcd}\]
\[\left(\begin{tikzcd}
E \ar[r, "f"] \ar[d, "a" left] & F \ar[d, "b"] \\
E' \ar[r, "f'" below] & F'
\end{tikzcd}, \begin{tikzcd}
V \ar[r, "g"] \ar[d, "c" left] & W \ar[d, "d"] \\
V' \ar[r, "g'" below] & V'
\end{tikzcd}\right) \mapsto
\begin{tikzcd}[column sep=large]
E \otimes V \ar[r, "f \otimes g"] \ar[d, "a \otimes c" left] &
F \otimes W \ar[d, "b \otimes d"] \\
E' \otimes V' \ar[r, "f' \otimes g'" below] & F' \otimes W'
\end{tikzcd}\]

It remains to describe what $\otimes_1$ does to morphisms in $\mc{M}_{\Vect(X), 1}$
that do not lie above identity morphisms of affine schemes. In this case, consider
a morphism $u : U \to V$ of affine schemes. A morphism above this in
$\mc{M}_{\Vect(X)}, 1$ is just a pair of squares as above but where the bottom arrow
is a morphism of vector bundles above $V$ pulled back to $U$ via $\id_X \times u$.
Since the pullback functor
$(\id_X \times u)^* : \Vect(X \times V) \to \Vect(X \times U)$
preserves tensor products, we can just take:

\begin{footnotesize}
\[\left(\begin{tikzcd}[column sep=huge]
E \ar[r, "f"] \ar[d, "a" left] & F \ar[d, "b"] \\
(\id_X \times u)^*E' \ar[r, "(\id_X \times u)^*f'" below] & (\id_X \times u)^*F'
\end{tikzcd}, \begin{tikzcd}[column sep=huge]
V \ar[r, "g"] \ar[d, "c" left] & W \ar[d, "d"] \\
(\id_X \times u)^*V' \ar[r, "(\id_X \times u)^*g'" below] & (\id_X \times u)^*V'
\end{tikzcd}\right)\]
\[ \mapsto
\begin{tikzcd}[column sep=8.5em]
E \otimes V \ar[r, "f \otimes g"] \ar[d, "(\id_X \times u)^*(a \otimes c)" left] &
F \otimes W \ar[d, "(\id_X \times u)^*(b \otimes d)"] \\
(\id_X \times u)^*(E' \otimes V') \ar[r, "(\id_X \times u)^*(f' \otimes g')" below] &
(\id_X \times u)^*(F' \otimes W')
\end{tikzcd}\]
\end{footnotesize}
\end{proof}

We are now ready to define, or perhaps redefine in our language,
the moduli stacks of Higgs bundles.

\begin{defn}[Moduli Stack of Higgs Bundles]\label{defn:Higgs-moduli}
We consider again the simplicial set $P$ shown in \ref{eqn:integrability-base} and define
the following maps:
\begin{enumerate}[label=(\roman*), itemsep=0pt]
\item We will write $\mc{M}_{P}$ for $\mc{M}_{\Vect(X), P}$ and $\mc{M}_{k}$ for
$\mc{M}_{\Vect(X), \Delta^k}$, for brevity.
\item $\wh{\id_K} : \mathrm{pt} \to \mc{M}_{1}$ corresonding to the map
$\id_K : K \to K$
\item $\wh{\wedge} : \mathrm{pt} \to \mc{M}_{1}$ corresponding to the map
$\wedge : K \otimes K \to K \wedge K$
\item $z : \mathrm{pt} \to \mc{M}_{0}$ corresponding to the zero vector bundle $0$ over $X$.
\item For a simplex $s : \Delta_k \to P$, we will write $\mc{M}_{s}$ for a copy of
$\mc{M}_{\Vect(X), \Delta^k}$.
We will write $\pi_s$ for the map
$\mc{M}_{\Vect(X), s} : \mc{M}_{P} \to \mc{M}_{s}$ of \cref{defn:mod-st-quiv-bun}.
For instance, $\pi_{e_{ab}}$ is the map that sends
a quiver bundle of shape $P$ to the edge indexed by $e_{ab}$.
\end{enumerate}

With these, we first define a map:

\[\begin{tikzcd}
\mc{M}_P \ar[d, "{(\pi_{e_{ab}}, \pi_a)}"] \\
\mc{M}_{e_{ab}} \times \mc{M}_{a} \ar[d, "\id \times e"] \\
\mc{M}_{e_{ab}} \times \mc{M}_{1} \ar[d, "\simeq"] \\
\mc{M}_{e_{ab}} \times \mathrm{pt} \times \mc{M}_{1} \times \mathrm{pt} \times \mathrm{pt}
    \ar[d, "\id \times \wh{\id_K} \times \id \times \wh{\wedge} \times z"] \\
\mc{M}_{e_{ab}} \times \mc{M}_1 \times \mc{M}_{1} \times \mc{M}_1 \times \mc{M}_{b'}
    \ar[d, "\otimes_1 \times \otimes_1 \times \id"] \\
\mc{M}_{e_{bc}} \times \mc{M}_{e_{cd}} \times \mc{M}_{b'}
\end{tikzcd}\]
and call it $\eta$. We then define the moduli stack of $K$--twisted Higgs bundles
on $X$ to be the following equalizer of stacks (recall that all limits are taken
within the respective quasicategories and so in this case, this is a (2, 1)-limit):
\[\begin{tikzcd}
\mc{M}_{\mathrm{Higgs}(X, K)} \ar[r] &
\mc{M}_{P}
    \ar[d, "{(\pi_{e_{bc}}, \pi_{e_{cd}}, \pi_{b'})}" shift left=5pt]
    \ar[d, "\eta" left, shift right=5pt]\\ &
\mc{M}_{e_{bc}} \times \mc{M}_{e_{cd}} \times \mc{M}_{b'}
\end{tikzcd}\]
\end{defn}

\begin{rmk}
Using \cref{lem:pullback-of-maps-by-projection} and \cref{guess:symm-mon-moduli}, we see
that the map $\eta$ in the above construction implements the following idea:
\begin{enumerate}[label=(\roman*), itemsep=0pt]
\item Take a diagram of vector bundles over $U \times X$ of shape $P$.
\item Take the edge indexed by $e_{ab}$ and tensor it with the identity of $pr_2^*K$
\item Take the vertex indexed by $a$, take its identity morphism and tensor that
with
\[
pr_2^*\wedge : pr_2^*(K \otimes K) \simeq pr_2^*K \otimes pr_2^*K
    \to pr_2^*(K \wedge K) \simeq pr_2^*K \wedge pr_2^*K
\]
\end{enumerate}
The equalizer $\mc{M}_{\mathrm{Higgs}(X, K)}$ is then the substack of $\mc{M}_P$
whose $U$--points are diagrams of shape $P$
such that the morphism indexed by $e_{bc}$ is the morphism indexed by $e_{ab}$ tensored
with $\id_{pr_2^*K}$, and the morphism indexed by $e_{cd}$ is the identity of the vertex
indexed by $a$ tensored with $pr_2^*\wedge$, and in addition, the vertex indexed by
$b'$ is the zero vector bundle.
\end{rmk}

By this remark, we have:

\begin{thm}\label{thm:MHiggs-point}
A $U$--point of $\mc{M}_{\mathrm{Higgs}(X, K)}$ corresonds to
a commutative diagram of the form:
\[\begin{tikzcd}
E \ar[r, "\phi"] \ar[dd] & E \otimes pr_2^*K \ar[d, "\phi \otimes \id_{pr_2^*K}"] \\
& E \otimes pr_2^*K \otimes pr_2^*K \ar[d, "\id_E \otimes pr_2^*\wedge"] \\
0 \ar[r] & E \otimes pr_2^*K \wedge pr_2^*K
\end{tikzcd}\]
\end{thm}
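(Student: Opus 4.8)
The plan is to unwind the $(2,1)$-limit defining $\mc{M}_{\mathrm{Higgs}(X,K)}$ and to evaluate the two parallel maps out of $\mc{M}_P$ on a general $U$-point, reading off the result with the point-dictionary for moduli stacks of quiver bundles. Writing $p := (\pi_{e_{bc}},\pi_{e_{cd}},\pi_{b'})$, the equalizer of $p$ and $\eta$ is the fibre product of $\mc{M}_P\xrightarrow{(p,\eta)}(\mc{M}_{e_{bc}}\times\mc{M}_{e_{cd}}\times\mc{M}_{b'})^2$ with the diagonal, so a $U$-point of $\mc{M}_{\mathrm{Higgs}(X,K)}$ is exactly a $U$-point $D$ of $\mc{M}_P$ equipped with an equivalence $p(D)\simeq\eta(D)$ in $(\mc{M}_{e_{bc}}\times\mc{M}_{e_{cd}}\times\mc{M}_{b'})(U)$, where $\mc{M}_{e_{bc}}\simeq\mc{M}_{e_{cd}}\simeq\mc{M}_1$ and $\mc{M}_{b'}\simeq\mc{M}_0$. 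By the corollary to \cref{defn:mod-st-quiv-bun}, and since $P$ is a quasicategory, such a $D$ is a \emph{commutative} $P$-shaped diagram of vector bundles over $U\times X$; write its vertices $E_a,E_b,E_{b'},E_c,E_d$ and its edges $\phi_{ab},\phi_{ab'},\phi_{bc},\phi_{cd},\phi_{b'd}$, so that $p(D)=(\phi_{bc},\phi_{cd},E_{b'})$ and commutativity yields $\phi_{cd}\circ\phi_{bc}\circ\phi_{ab}=\phi_{b'd}\circ\phi_{ab'}$.

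Next I would trace $\eta$ on $D$ one arrow at a time. The first two maps send $D$ to $(\phi_{ab},E_a)$ and then, after applying the unit $e$ of \cref{thm:arrow_moduli} to the vertex $E_a$, to $(\phi_{ab},\id_{E_a})$. The point maps $\wh{\id_K}$, $\wh{\wedge}$, $z$ insert the global points of $\mc{M}_1$, $\mc{M}_1$, $\mc{M}_0$ classifying $\id_K$, $\wedge\colon K\otimes K\to K\wedge K$ and $0$ over $X$; by the computation in the proof of \cref{lem:pullback-of-maps-by-projection}, as $U$-points these become the triples $(pr_2^*K,pr_2^*K,\id_{pr_2^*K})$, $(pr_2^*(K\otimes K),pr_2^*(K\wedge K),pr_2^*\wedge)$ and the bundle $0$ over $U\times X$. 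Applying $\otimes_1\times\otimes_1\times\id$ and using \cref{guess:symm-mon-moduli}(i)--(ii) together with the symmetric monoidality of $pr_2^*$, so that $pr_2^*(K\otimes K)\simeq pr_2^*K\otimes pr_2^*K$, we obtain
\[
\eta(D)\;=\;\bigl(\,\phi_{ab}\otimes\id_{pr_2^*K}\,,\;\id_{E_a}\otimes pr_2^*\wedge\,,\;0\,\bigr),
\]
where the first entry is the triple $(E_a\otimes pr_2^*K,\,E_b\otimes pr_2^*K,\,\phi_{ab}\otimes\id)$ and the second is $(E_a\otimes pr_2^*K\otimes pr_2^*K,\,E_a\otimes pr_2^*K\wedge pr_2^*K,\,\id_{E_a}\otimes pr_2^*\wedge)$.

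Comparing with $p(D)=(\phi_{bc},\phi_{cd},E_{b'})$, the equivalence $p(D)\simeq\eta(D)$ unpacks to: $(E_b,E_c,\phi_{bc})\simeq(E_a\otimes pr_2^*K,\,E_b\otimes pr_2^*K,\,\phi_{ab}\otimes\id)$, $(E_c,E_d,\phi_{cd})\simeq(E_a\otimes pr_2^*K\otimes pr_2^*K,\,E_a\otimes pr_2^*K\wedge pr_2^*K,\,\id_{E_a}\otimes pr_2^*\wedge)$, and $E_{b'}\simeq0$. Hence the only free data in $D$ are the vertex $E:=E_a$ and the edge $\phi:=\phi_{ab}$, the edges $\phi_{ab'}$ and $\phi_{b'd}$ being forced once $E_{b'}\simeq0$, since $\Hom(E,0)$ and $\Hom(0,-)$ are singletons. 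Transporting $D$ along these equivalences turns it into exactly the square in the statement: vertices $E$, $E\otimes pr_2^*K$, $E\otimes pr_2^*K\otimes pr_2^*K$, $E\otimes pr_2^*K\wedge pr_2^*K$, $0$; right-hand edges $\phi$, $\phi\otimes\id_{pr_2^*K}$, $\id_E\otimes pr_2^*\wedge$; and left-hand edges the unique maps into and out of $0$. Its commutativity --- inherited from that of $D$ --- reads $(\id_E\otimes pr_2^*\wedge)\circ(\phi\otimes\id_{pr_2^*K})\circ\phi=\phi_{b'd}\circ\phi_{ab'}$, whose right-hand side vanishes as it factors through $0$; this is the integrability condition \eqref{eqn:integrability}. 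Conversely, any commutative integrability square of this form assembles into such a $D$ (the remaining simplices of $P$ filled using commutativity of the square), and the two constructions are mutually inverse up to equivalence, giving the asserted correspondence.

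The computation of $\eta(D)$ is the only step requiring real care: it must be carried out in the $(2,1)$-category $\St^1_{\s{C}}$, keeping track both of how the inserted global points acquire their $pr_2^*$ upon restriction to $U$ --- which is precisely the mechanism of \cref{lem:pullback-of-maps-by-projection} --- and of the source/target isomorphisms packaged inside each equivalence of triples, so that the transport in the previous paragraph is coherent. It is worth emphasizing that the \emph{commutativity} of the integrability square is not imposed by the equalizer, which only fixes the edges $\phi_{bc}$, $\phi_{cd}$ and forces $E_{b'}$ to vanish; it is inherited for free from the fact that $U$-points of $\mc{M}_P$ are already commutative diagrams, which is why it matters that $P$ is a quasicategory. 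Finally, as with the rest of \cref{sec:HiggsBundles}, the argument is conditional on \cref{guess:symm-mon-moduli}, which supplies the maps $\otimes_1$ out of which $\eta$ is built.
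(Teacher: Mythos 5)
Your proposal is correct and follows essentially the same route as the paper: the paper's own ``proof'' is simply the remark preceding the theorem, which unpacks the equalizer by evaluating $(\pi_{e_{bc}},\pi_{e_{cd}},\pi_{b'})$ and $\eta$ on $U$--points via \cref{lem:pullback-of-maps-by-projection} and \cref{guess:symm-mon-moduli}, exactly as you do, only with less detail. Your added care about where the commutativity (integrability) comes from --- the simplicial structure of $P$ rather than the equalizer --- is a useful clarification of a point the paper leaves implicit.
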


\begin{rmk}
One point to note is that in the infinity categorical setting, this construction will lead
to diagrams that commute up to ``homotopy'' with respect to the appropriate notion of
homotopy in the given infinity categorical setting, and in this case we do not just
get a Higgs bundle but a Higgs bundle along with a specific witness for
its integrability.
\end{rmk}

\begin{rmk}
If we restrict attention to a curve $X$ over $\bb{C}$, then the commutativity of the diagram
in the above theorem is automatic as
$pr_2^*K \wedge pr_2^*K \simeq pr_2^*(K^{\wedge 2}) \simeq pr_2^*(0) = 0$. Hence, in this
case, we recover the construction of \cite[\S 7.4]{CW17}.
\end{rmk}

Up to now, we have a reconstruction of the moduli stack of Higgs bundles, but this
perspective leads to the construction of a different moduli stack that, to the best of our
knowledge, has not yet appeared in the literature --- a moduli stack whose
points are morphisms of Higgs bundles.

\begin{defn}[Moduli Stack of Higgs Bundle Morphisms]
Let us draw $S := \Delta^1 \times \Delta^1$ as:
\[\begin{tikzcd}
a \ar[r, "u"] \ar[d, "v" left] & b \ar[d, "w"] \\
c \ar[r, "y" below] & d
\end{tikzcd}\]
Consider the moduli stack $\mc{M}_{\Vect(X), S}$ which we write as $\mc{M}_{S}$. Using
notation similar to the \cref{defn:Higgs-moduli}, we consider the map:
\[
\xi :
\mc{M}_S \to[\pi_v]
\mc{M}_1 \to[\simeq]
\mc{M}_1 \times \mathrm{pt} \to[\id \times \wh{\id_K}]
\mc{M}_1 \times \mc{M}_1 \to[\otimes_1]
\mc{M}_1
\]
We define the moduli of stack of morphisms of $K$--twisted pre--Higgs bundles
as the following equalizer of stacks:
\[\begin{tikzcd}
\mc{M}_{\mathrm{preHiggs}(X, K), 1} \ar[r] &
\mc{M}_S \ar[r, shift left, "\xi"] \ar[r, shift right, "\pi_w" below] &
\mc{M}_1
\end{tikzcd}\]
Using \cref{lem:pullback-of-maps-by-projection},
we can see that the $U$--points of $\mc{M}_{\mathrm{preHiggs}(X, K), 1}$ are precisely
diagrams of vector bundles of the form:
\begin{equation}\label{eqn:preHiggs-morphism}
\begin{tikzcd}
E \ar[r, "\phi"] \ar[d, "f" left] & E \otimes pr_2^*K \ar[d, "f \otimes \id"] \\
F \ar[r, "\psi" below] & F \otimes pr_2^*K
\end{tikzcd}
\end{equation}
We write:
\[
\pi_q' : \mc{M}_{\mathrm{preHiggs}(X, K), 1} \to \mc{M}_S \to[\pi_q] \mc{M}_{\Delta^1}
\]
for $q = u : \Delta^1 \to S$ or $q = y : \Delta^1 \to S$ and
\[
\pi_{e_{ab}}' : \mc{M}_{\mathrm{Higgs}(X, K)} \to \mc{M}_{P} \to[\pi_{e_{ab}}] \mc{M}_1
\]
We then define the moduli stack of morphisms of $K$--twisted Higgs bundles to be the cone
point of the following limit diagram:
\[\begin{tikzcd}[column sep=tiny]
& & \mc{M}_{\mathrm{Higgs}(X, K), 1} \ar[lld, "s" above left] \ar[d] \ar[rrd, "t"] & &\\
\mc{M}_{\mathrm{Higgs}(X, K)} \ar[rd, "\pi_{e_{ab}}'" below left] & &
\mc{M}_{\mathrm{preHiggs}(X, K), 1} \ar[ld, "\pi_u'"] \ar[rd, "\pi_y'" below left] & &
\mc{M}_{\mathrm{Higgs}(X, K)} \ar[ld, "\pi_{e_{ab}}'"] \\
& \mc{M}_1 & & \mc{M}_1 &
\end{tikzcd}\]
We call the canonical maps $s$ and $t$ to $\mc{M}_{\mathrm{Higgs}(X, K)}$ the
source and target maps.
\end{defn}

Using arguments similar to the ones we used to examine $U$--points of the other stacks
constructed so far --- for instance, \cref{defn:Higgs-moduli} --- we have the following
result:

\begin{thm}
A $U$--point of $\mc{M}_{\mathrm{preHiggs}(X, K), 1}$
is a diagram formed by gluing the following diagrams along the $\phi$ and $\psi$ edges:
\[
\begin{tikzcd}
E \ar[r, "\phi"] \ar[dd] & E \otimes pr_2^*K \ar[d, "\phi \otimes \id_{pr_2^*K}"] \\
& E \otimes pr_2^*K \otimes pr_2^*K \ar[d, "\id_E \otimes pr_2^*\wedge"] \\
0 \ar[r] & E \otimes pr_2^*K \wedge pr_2^*K
\end{tikzcd}
\hspace{5em}
\begin{tikzcd}
F \ar[r, "\psi"] \ar[dd] & F \otimes pr_2^*K \ar[d, "\psi \otimes \id_{pr_2^*K}"] \\
& F \otimes pr_2^*K \otimes pr_2^*K \ar[d, "\id_F \otimes pr_2^*\wedge"] \\
0 \ar[r] & F \otimes pr_2^*K \wedge pr_2^*K
\end{tikzcd}
\]
\[\begin{tikzcd}
E \ar[r, "\phi"] \ar[d, "f" left] & E \otimes pr_2^*K \ar[d, "f \otimes \id"] \\
F \ar[r, "\psi" below] & F \otimes pr_2^*K
\end{tikzcd}\]
The map $s$ sends such a $U$--point to the top-left diagram
and $t$ sends it to the top-right diagram.
\end{thm}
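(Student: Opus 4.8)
The statement concerns the cone point $\mc{M}_{\mathrm{Higgs}(X, K), 1}$ of the limit diagram in the preceding definition, equipped with the projections $s$ and $t$ to its two copies of $\mc{M}_{\mathrm{Higgs}(X, K)}$. The plan is to compute its groupoid of $U$--points by the mechanism used repeatedly in \cref{sec:ModStVec} and already in \cref{sec:HiggsBundles}: the evaluation functor $\mc{F} \mapsto \mc{F}(U)$ on $\St_{\CRing^{\heartsuit, \op}}^1$ preserves limits, hence carries the defining limit of $\mc{M}_{\mathrm{Higgs}(X, K), 1}$ to the corresponding $(2, 1)$--limit of groupoids, which one then identifies explicitly from the descriptions already established for the pieces.

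First I would record the inputs. By \cref{thm:MHiggs-point}, $\mc{M}_{\mathrm{Higgs}(X, K)}(U)$ is the groupoid whose objects are the commutative integrability diagrams over $U \times X$ appearing in that theorem --- equivalently, pairs $(E, \phi : E \to E \otimes pr_2^*K)$ together with a witness that $(\id_E \otimes pr_2^*\wedge) \circ (\phi \otimes \id) \circ \phi = 0$. By \cref{lem:M1_points}, together with the identification of morphisms in $\mc{M}_1(U)$ with commuting squares of vector-bundle maps established in the proof of \cref{guess:symm-mon-moduli}, $\mc{M}_1(U)$ is the arrow groupoid of $\Vect(U \times X)$. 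By \cref{lem:pullback-of-maps-by-projection} applied to the equalizer defining $\mc{M}_{\mathrm{preHiggs}(X, K), 1}$, a $U$--point of $\mc{M}_{\mathrm{preHiggs}(X, K), 1}$ is precisely a commuting square as in \eqref{eqn:preHiggs-morphism}, with top edge $\phi : E \to E \otimes pr_2^*K$, bottom edge $\psi : F \to F \otimes pr_2^*K$ and vertical edges $f$, $f \otimes \id_{pr_2^*K}$. Finally, $\pi_{e_{ab}}'$, $\pi_u'$, $\pi_y'$ on $U$--points extract, respectively, the $\phi$--edge of an integrability diagram and the top ($\phi$) and bottom ($\psi$) edges of a preHiggs square.

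Then I would assemble the cone-point limit. A $U$--point of $\mc{M}_{\mathrm{Higgs}(X, K), 1}$ is a tuple $(D_E, Q, D_F)$ --- an integrability diagram $D_E$ for some $(E_0, \phi_0)$, a preHiggs square $Q$ carrying $(E, \phi, F, \psi, f)$, and an integrability diagram $D_F$ for some $(F_0, \psi_0)$ --- together with isomorphisms $\phi_0 \simeq \phi$ and $\psi \simeq \psi_0$ in $\mc{M}_1(U)$. Since an isomorphism of triples is a commuting square of isomorphisms of bundles, the first isomorphism identifies $E_0 \cong E$ (and $E_0 \otimes pr_2^*K \cong E \otimes pr_2^*K$) compatibly with $\phi_0$ and $\phi$; transporting $D_E$ along it we may assume $E_0 = E$, $\phi_0 = \phi$ with the gluing isomorphism the identity, and likewise transport $D_F$ along the second. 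Every object of the limit groupoid is therefore canonically equivalent to one that is literally the union of the three diagrams in the statement, glued along their common $\phi$-- and $\psi$--edges; running the same transport on morphisms upgrades this to an equivalence of groupoids. The maps $s$ and $t$ are by construction the two projections of the limit onto $\mc{M}_{\mathrm{Higgs}(X, K)}$, so on $U$--points they return $D_E$ (the top-left diagram, with underlying Higgs bundle $(E, \phi)$) and $D_F$ (the top-right diagram, $(F, \psi)$) respectively.

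The step I expect to be the main obstacle --- indeed essentially the only non-bookkeeping step --- is the transport argument just described: checking that the $(2, 1)$--limit genuinely fuses the a priori three separate copies of the underlying bundle produced by the three subdiagrams into a single $E$ (and a single $F$), with no residual higher-coherence datum. This is a $2$--categorical diagram chase of the same flavour as the identifications of $s, t, e, c$ in \cref{sec:ModStVec}, carried out one categorical level up, and it is exactly here that we use that we work with $1$--stacks --- so the relevant limit is a strict $2$--limit of $(2, 1)$--categories --- and that morphisms in $\mc{M}_1(U)$ are honest commuting squares rather than something carrying extra structure.
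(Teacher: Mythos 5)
Your proposal is correct and follows essentially the route the paper intends: the paper gives no written proof beyond remarking that the claim follows by the same $U$--point chase used for the earlier stacks, and your argument (evaluate the defining $(2,1)$--limit at $U$, identify the three factors via \cref{thm:MHiggs-point}, \cref{lem:M1_points} and \cref{lem:pullback-of-maps-by-projection}, then transport/strictify along the gluing isomorphisms) is exactly that chase carried out in detail. You also correctly read the statement as describing the cone point $\mc{M}_{\mathrm{Higgs}(X, K), 1}$ rather than $\mc{M}_{\mathrm{preHiggs}(X, K), 1}$, which is the only reading under which the maps $s$ and $t$ make sense.
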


We end this section by stating an easy corollary of our work so far and a guess.

\begin{cor}\label{cor:Higgs-algebraicity}
If $X$ satisfies the hypotheses of \cref{thm:algebraicity} and \cref{guess:symm-mon-moduli}
holds, the stacks
$\mc{M}_{\mathrm{Higgs}(X, K)}$ and $\mc{M}_{\mathrm{Higgs}(X, K), 1}$ are Artin.
\end{cor}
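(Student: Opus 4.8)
The plan is to show that both stacks in question are obtained from the algebraic stacks produced by \cref{thm:algebraicity} via finite limits (pullbacks and equalizers) along maps into algebraic stacks, so that algebraicity is inherited. First I would verify that the hypotheses of \cref{thm:algebraicity} apply to the simplicial set $P$ of \eqref{eqn:integrability-base}: since $P$ is a finite simplicial set and $X$ satisfies the listed conditions, \cref{thm:algebraicity}(i) gives that $\mc{M}_{P} = \mc{M}_{\Vect(X), P}$ is Artin, and parts (ii)--(iii) give that it is locally of finite presentation, quasi-separated, and (under the additional affineness hypotheses) has affine stabilizers. The same applies to $\mc{M}_S$ for $S = \Delta^1 \times \Delta^1$, and to $\mc{M}_1 = \mc{M}_{\Vect(X), 1}$, $\mc{M}_0 = \mc{M}_{\Vect(X)}$, and the various $\mc{M}_k = \mc{M}_{\Vect(X), \Delta^k}$, all of which are finite-shape quiver-bundle moduli stacks. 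The point $\mathrm{pt} = \Spec(R)$ is an affine scheme and hence algebraic.

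Next I would observe that, granting \cref{guess:symm-mon-moduli}, all the maps used to build the two Higgs moduli stacks have algebraic source and target: the projections $\pi_s$ (functoriality of \cref{defn:mod-st-quiv-bun}), the tensor maps $\otimes_0, \otimes_1$, the identity-insertion map $e$ of \cref{thm:arrow_moduli}, and the distinguished points $\wh{\id_K}, \wh{\wedge}, z$ out of $\mathrm{pt}$. Consequently the map $\eta$ of \cref{defn:Higgs-moduli} is a morphism of algebraic stacks, and $\mc{M}_{\mathrm{Higgs}(X, K)}$, being the equalizer of $\eta$ and $(\pi_{e_{bc}}, \pi_{e_{cd}}, \pi_{b'})$, is an equalizer of maps between algebraic stacks. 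An equalizer in this $(2,1)$-categorical setting is computed as a pullback of the diagonal, i.e. as $\mc{M}_P \times_{(\mc{M}_{e_{bc}} \times \mc{M}_{e_{cd}} \times \mc{M}_{b'})^2} (\mc{M}_{e_{bc}} \times \mc{M}_{e_{cd}} \times \mc{M}_{b'})$ using the two maps $(\eta, (\pi_{e_{bc}}, \pi_{e_{cd}}, \pi_{b'}))$ and the diagonal; since algebraic stacks are closed under fibre products, this is algebraic. The same argument handles $\mc{M}_{\mathrm{preHiggs}(X, K), 1}$ as the equalizer of $\xi$ and $\pi_w$ over $\mc{M}_1$, and then $\mc{M}_{\mathrm{Higgs}(X, K), 1}$, being a finite limit of the diagram with vertices $\mc{M}_{\mathrm{Higgs}(X, K)}$, $\mc{M}_{\mathrm{preHiggs}(X, K), 1}$, and $\mc{M}_1$ and edges $s, t, \pi_u', \pi_y', \pi_{e_{ab}}'$, is likewise algebraic.

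Concretely, then, the proof reduces to two bookkeeping points: (a) every arrow appearing in \cref{defn:Higgs-moduli} and in the definition of $\mc{M}_{\mathrm{Higgs}(X, K), 1}$ is a morphism of algebraic stacks, which follows from \cref{thm:algebraicity} together with \cref{guess:symm-mon-moduli}; and (b) the constructions are finite limits, which are computed from pullbacks and terminal objects exactly as in the proof of \cref{lem:alg-quiv-bun-mod}, and Artin stacks are closed under such limits. The main obstacle is genuinely point (a) as it pertains to \cref{guess:symm-mon-moduli}: the corollary is only conditional precisely because the existence of the monoidal structure maps $\otimes_0, \otimes_1$ as morphisms of stacks (as opposed to functors on $U$-points for each $U$) has not been established. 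Assuming that guess, there is no further difficulty; one simply threads the finite-limit argument of \cref{lem:alg-quiv-bun-mod} and \cref{thm:algebraicity} through the two equalizer/limit diagrams. I would also note in passing that the same reasoning yields that $\mc{M}_{\mathrm{Higgs}(X, K)}$ and $\mc{M}_{\mathrm{Higgs}(X, K), 1}$ inherit local finite presentation, quasi-separatedness, and (under the affine-diagonal hypotheses) affine stabilizers, by the corresponding parts of \cref{thm:algebraicity} and \cref{lem:affine-stabilizers}, though this is not needed for the statement as given.
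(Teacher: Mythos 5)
Your proposal is correct and follows essentially the same route as the paper, which simply notes that both stacks are iterated pullbacks (finite limits) of Artin stacks; you have just spelled out the bookkeeping — algebraicity of the building blocks via \cref{thm:algebraicity}, the role of \cref{guess:symm-mon-moduli} in making $\eta$ and $\xi$ morphisms of algebraic stacks, and the computation of equalizers as pullbacks along diagonals.
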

\begin{proof}
It suffices to see that they are constructed as iterated pullbacks of Artin stacks.
\end{proof}

\begin{guess}\label{guess:Higgs-moduli-category}
If \cref{guess:symm-mon-moduli} holds, then there exist maps
\[
e : \mc{M}_{\mathrm{Higgs}(X, K)} \to \mc{M}_{\mathrm{Higgs}(X, K), 1}
\]
and
\[
c : \mc{M}_{\mathrm{Higgs}(X, K), 1} \times_{\mc{M}_{\mathrm{Higgs}(X, K)}}
    \mc{M}_{\mathrm{Higgs}(X, K), 1} \to \mc{M}_{\mathrm{Higgs}(X, K), 1}
\]
such that $(\mc{M}_{\mathrm{Higgs}(X, K)}, \mc{M}_{\mathrm{Higgs}(X, K), 1}, s, t, e, c)$
is an internal category in $\St_{\CRing^{\heartsuit, \op}}^1$.
\end{guess}
\begin{proof}[Proof sketch]
We need only provide the maps $e$ and $c$, while the rest needs to be verified by chasing
$U$--points. The map $e$ is given by the diagram of the form \ref{eqn:preHiggs-morphism}
while the composition map is given by pasting such diagrams and taking the composition
of the composeable arrows.
\end{proof}

\subsection{Non-Abelian Hodge Theory}

In line with the categorification of the moduli problem for Higgs bundles, it is natural
to consider the categorified versions of the Hitchin morphism and the non-Abelian Hodge
correspondence. The questions that need to be answered in order to make these ideas
precise are as follows:

\begin{enumerate}
\item Denoting the Hitchin base as $H$, we can then ask:
is there a stack $H_1$ and maps $s, t : H_1 \to H$, $e : H \to H_1$ and
$c : H_1 \times_H H_1 \to H_1$ that form an internal category in stacks?
See the Conventions subsection in \cref{subsubsec:Conventions} for
the interpretations of $s, t, e, c$ and \cite{Ghi19} for detailed discussion
of internal category theory.

\item The Hitchin morphism $\mathrm{Hitchin} : \mc{M}_{\mathrm{Higgs}(X, K)} \to H$
is given by viewing a Higgs field $\phi : E \to E \otimes K$
as a global section of $E^\vee \otimes E \otimes K$ and taking the maps
$c_i : E^\vee \otimes E \otimes K \to K^{\otimes i}$ given by the
$i$--th characteristic coefficient maps. We can then ask: is there a morphism
\[
\mathrm{Hitchin}_1 : \mc{M}_{\mathrm{Higgs}(X, K), 1} \to H_1
\]
that together with the usual Hitchin morphism,
form an internal functor of category stacks? That is, are there commutative diagrams
of the form:
\[\begin{tikzcd}[row sep=huge, column sep=huge]
\mc{M}_{\mathrm{Higgs}(X, K), 1} \ar[r, "\mathrm{Hitchin}_1"]
  \ar[d, shift right=11pt, "s" description] \ar[d, shift right=5pt, "t" description] &
H_1
  \ar[d, shift right=7pt, "s" description] \ar[d, shift right=1pt, "t" description] \\
\mc{M}_{\mathrm{Higgs}(X, K)} \ar[r, "\mathrm{Hitchin}" below]
    \ar[u, shift right=1pt, "e" description] &
H \ar[u, shift right=5pt, "e" description]
\end{tikzcd}\]
\[\begin{tikzcd}[row sep=huge, column sep=huge]
\mc{M}_{\mathrm{Higgs}(X, K), 1}
\times_{\mc{M}_{\mathrm{Higgs}(X, K)}} \mc{M}_{\mathrm{Higgs}(X, K), 1}
\ar[r, "\mathrm{Hitchin}_1 \times_{\mc{M}_{\mathrm{Higgs}(X, K)}} \mathrm{Hitchin}_1"]
  \ar[d, "c" left] &
H_1 \times_{H} H_1 \ar[d, "c"] \\
\mc{M}_{\mathrm{Higgs}(X, K), 1} \ar[r, "\mathrm{Hitchin}_1" below] &
H_1
\end{tikzcd}\]

\item What is the category stack that parametrizes
representations of the fundamental group?
That is, are there moduli stacks $\mc{M}_{\mathrm{Rep}(\pi_1(X))}$ and
$\mc{M}_{\mathrm{Rep}(\pi_1(X)), 1}$ such that the points of the first correspond to
representations of some sort of fundamental group of $X$ and those of the second
correspond to morphisms of such representations in any reasonble sense?
Do they form a category stack? Does it admit a functor of internal
categories to the categorified Hitchin base as we have asked for Higgs bundles?
There is a potential answer to the first few questions: for any stack $X$, we can
take its Betti ``shape'' $X_{\mathrm{Betti}}$ --- described in, for example,
\cite{Por17} but originating in works of Carlos Simpson --- and
consider $\mc{M}_{\Vect(X_{\mathrm{Betti}}), \Delta^n}$ for $n = 0, 1, \dots$.
For $n= 0$, the points of this stack are representations of $\pi_1(X)$, when $X$ is
a smooth complex projective curve, say. So, it is natural to expect that points of
the morphism stack, that is
$\mc{M}_{\Vect(X_{\mathrm{Betti}}), 1} = \mc{M}_{\Vect(X_{\mathrm{Betti}}), \Delta^1}$,
will correspond to morphisms of $\pi_1(X)$--representations.

\item Similarly, what is the right definition of a category stack paramentrizing
flat connections and their morphisms? Again, there is an answer in terms of shapes:
take $\mc{M}_{\Vect(X_{\mathrm{dR}}), \Delta^n}$ for $n = 0, 1, \dots$, where
$X_{\mathrm{dR}}$ is the de Rham shape of $X$, also described in \cite{Por17}.

\item Can the equivalences of moduli spaces taking part in the Riemann-Hilbert
correspondence and the non-Abelian Hodge correspondence be extended to equivalences
of the respective internal category stacks being asked for in the previous questions?
There is a pressing issue to be addressed here: there is no known morphism of stacks
extending the homeomorphism of moduli spaces realizing the
non-Abelian Hodge correspondence.
Hence, on the nose, this cannot be done --- even at the object level,
there is no known map.

However, we can consider the following strategy. There is a dg--equivalence of categories
between Higgs bundles and flat connections, for a suitably chosen base. One can try to
assemble the mapping of morphisms taking part in this equivalence into a mapping
of stacks:
\[
\mc{M}_{\mathrm{Higgs}(X, K), 1} \longleftrightarrow \mc{M}_{\Vect(X_{\mathrm{dR}}), 1}
\]
Then, one can take the composite:
\[
\mc{M}_{\mathrm{Higgs}(X, K)} \to[e]
\mc{M}_{\mathrm{Higgs}(X, K), 1} \to
\mc{M}_{\Vect(X_{\mathrm{dR}}), 1} \to
\mc{M}_{\Vect(X_{\mathrm{dR}})}
\]
where the first map sends an object to its identity arrow and the
last map sends an arrow to its source or target. This will be one possible mapping
of stacks but it is most likely not the non-Abelian Hodge map away from the
stable locus. However, one can then consider properties of this map to measure
how badly the non-Abelian Hodge map fails away from the stable locus.
\end{enumerate}

We can ask even more questions analogous to questions explored in the usual study of
Higgs bundles. For instance,
If the answer to (2) above is yes, what do the fibres of
$\mathrm{Hitchin}_1$ look like? Whatever the fibres are, for each fibre, we can consider
the image of the fibre under the source and target maps
$\mc{M}_{\mathrm{Higgs}(X, K), 1} \to \mc{M}_{\mathrm{Higgs}(X, K)}$. That is, for each
fibre $Y$ of $\mathrm{Hitchin}_1$ we get two fibres $Y_0, Y_1$ of
$\mathrm{Hitchin}$ and pair of maps:
\[
Y_0 \ot[s] Y \to[t] Y_1
\]
We can then ask what kind of information $Y$ might contain about the $Y_0$ and $Y_1$ and
if $Y$ somehow relates these two fibres. We can go further and ask what the analogue
of the $\bb{C}^*$--action is in this setting and so on.

Finally, we note that what we have discussed in this paper is just one level of
categorification. That is we have only discussed internal categories in stacks
such that its object of objects parametrizes vector bundles, Higgs bundles, quiver
bundles, etc. and its object of morphisms parametrizes morphisms of these objects.
We can ask for simplicial stacks whose $n$--th level parametrizes $n$--simplices
in the quasicategories of vector bundles, quiver bundles, Higgs bundles in the derived
algebraic, spectral algebraic and derived analytic settings.
Then, we can ask for a non-Abelian Hodge theory in this more
complete setting. The considerations discussed in this subsection
will form a major portion of the first named author's upcoming thesis.

\section{Moduli Theory and Homotopy Theory}
\label{sec:ModHmtpy}

We conclude by discussing some possible links of the perspectives of this paper with
homotopy theory.

\subsection{General Moduli Theory}

Thinking of $1$--stacks as categories fibred in groupoids over
the $1$--category of affine schemes, there is a more intuitive definition
of stacks of quiver bundles. Consider a $1$--category $I$ and consider
quiver bundles of shape $I$ over $X$.
Now, take the prestack $pr_2 : I \times \s{C} \to \s{C}$, where the product
is just the product of categories. We can then take the mapping prestack
$\Map(I \times \s{C}, \mc{M}_{\Vect(X)})$ whose
$U$--points are maps $U \times I \times \s{C} \to \mc{M}_{\Vect(X)}$
which are, in turn, equivalent to maps
$U \times I \times \s{C} \times X \to \Vect$. However, these are again equivalent
to maps $I \times \s{C} \to \Vect(U \times X)$, which commute with the projections
to $\s{C}$. Such a map is just a diagram of vector bundles over $U \times X$.
We note that the mapping prestack is, in fact, a stack since
$\mc{M}_{\Vect(X)}$ is. Hence, this serves as an alternate definition of a
moduli stack of quiver bundles over $X$. If we considered $1$--stacks as just
sheaves on $\s{C}$, then this would be equivalent to the mapping stack
$\Map(\underline{I}, \mc{M}_{\Vect(X)})$, where $\underline{I}$ is the constant
sheaf valued at $I$. With this, we would like to make a conjecture:

\begin{defn}
Let $X \in \St_\s{C}$ be a stack over $\s{C}$.
We define
\begin{align*}
\mc{M}_{\Vect(X), I}^{cat}
:=& \Map\br{\underline{I}, \mc{M}_{\Vect(X)}} \\
\simeq& \Map\br{\underline{I}, \Map(X, \Vect)} \\
\simeq& \Map(X \times \underline{I}, \Vect)
\end{align*}
to be the category theoretic
moduli stack of quiver bundles over $X$.
\end{defn}

\begin{cnj}
There is an equivalence of stacks
$\mc{M}_{\Vect(X), I}^{cat} \simeq \mc{M}_{\Vect(X), I}$ for all $I$. In fact, this equivalence is a natural equivalence of functors
$\sSet \to \St_{\CRing^{\heartsuit, \op}}^1$.
\end{cnj}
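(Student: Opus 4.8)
The plan is to reduce the whole statement to a comparison of the two functors on the full subcategory $\Delta \subset \sSet$, where both become ``nerves'' of the internal category of \cref{thm:arrow_moduli}; naturality in $I$, and the passage to arbitrary (in particular finite) $I$, will then be formal.

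\emph{First, both sides are determined by their restrictions to $\Delta$.} One must begin by fixing the meaning of $\Map(X \times \underline{I}, \Vect)$ as the $\sSet$--enriched mapping object attached to the category--valued sheaf $\Vect : \s{C}^\op \to \Catinf$ of the Assumptions, so that for $I = \Delta^n$ the $U$--points of $\mc{M}_{\Vect(X), \Delta^n}^{cat}$ are $\mathrm{N}\,\Vect(U \times X)_n$ (strings of $n$ composable vector--bundle maps over $U \times X$ and the isomorphisms between them), not merely $\Vect(U \times X)$. With this reading, the constant--sheaf functor $\underline{(-)} : \sSet \to \St_\s{C}$ is a left adjoint, hence preserves colimits, while $\Map(-, \Vect)$ turns colimits into limits and finite products distribute over colimits; so $I \mapsto \mc{M}_{\Vect(X), I}^{cat}$ sends colimits of simplicial sets to limits of stacks. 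The functor $I \mapsto \mc{M}_{\Vect(X), I}$ has the same property: by \cref{defn:mod-st-quiv-bun} it is the right Kan extension along $\Delta^\op \hookrightarrow \sSet^\op$ of a simplicial object in $\St_\s{C}$, which upon taking opposites is the left Kan extension of a cosimplicial object along the Yoneda embedding $\Delta \to \sSet$, i.e.\ the unique colimit--preserving extension of $[n] \mapsto \mc{M}_{\Vect(X), \Delta^n}$; and since $\St_\s{C}^1$ is complete, the pointwise formula gives $\mc{M}_{\Vect(X), I} \simeq \lim_{(\Delta^n \to I) \in \Delta_{/I}} \mc{M}_{\Vect(X), \Delta^n}$. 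Because $\sSet = \PSh(\Delta)$ is the free cocompletion of $\Delta$, a colimit--preserving functor out of $\sSet$ is determined up to canonical equivalence by its restriction to $\Delta$. Hence it suffices to produce an equivalence of simplicial objects $\mc{M}_{\Vect(X), \Delta^\bullet} \simeq \mc{M}_{\Vect(X), \Delta^\bullet}^{cat}$ in $\St_{\CRing^{\heartsuit, \op}}^1$; the equivalence for all $I$, its naturality in $I$, and the finite case then follow formally.

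\emph{Second, the comparison on $\Delta$.} On the ``$cat$'' side, $\mc{M}_{\Vect(X), \Delta^\bullet}^{cat}$ is a Segal object --- the nerve of an internal category in $\St_\s{C}^1$ --- since $\mathrm{N}\,\Vect(U\times X)_n \simeq \mathrm{N}\,\Vect(U\times X)_1 \times_{\mathrm{N}\,\Vect(U\times X)_0} \cdots \times \mathrm{N}\,\Vect(U\times X)_1$ ($n$ factors), naturally in $U$. On the other side, $\mc{M}_{\Vect(X), \Delta^n}$ is by construction the substack of $\mc{M}_{\Vect(X), P^n}$ cutting out commutativity of every triangle; using the composition morphism $c$ of \cref{thm:arrow_moduli} to fill in the edges $(i,j)$ for $j > i+1$ from the edges $(k, k+1)$, one checks that ``remember the edges $(0,1),(1,2),\dots,(n-1,n)$'' is an equivalence
\[
\mc{M}_{\Vect(X), \Delta^n} \;\xrightarrow{\ \sim\ }\; \mc{M}_{\Vect(X), 1} \times_{\mc{M}_{\Vect(X)}} \cdots \times_{\mc{M}_{\Vect(X)}} \mc{M}_{\Vect(X), 1} \quad (n \text{ factors}),
\]
so $\mc{M}_{\Vect(X), \Delta^\bullet}$ is a Segal object as well. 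Thus each side is determined up to equivalence by its $2$--truncation, i.e.\ by an internal category with object stack $\mc{M}_{\Vect(X)}$: on one side the internal category $(\mc{M}_{\Vect(X)}, \mc{M}_{\Vect(X), 1}, s, t, e, c)$ of \cref{thm:arrow_moduli}, on the other the one with morphism stack $\mc{M}_{\Vect(X), \Delta^1}^{cat}$. It remains to match these. Both have object stack $\mc{M}_{\Vect(X)}$; by \cref{lem:M1_points} together with the description of morphisms in $\mc{M}_{\Vect(X), 1}(U)$ in the proof sketch of \cref{guess:symm-mon-moduli}, the stack $\mc{M}_{\Vect(X), 1}$ has the same functor of points as $\mc{M}_{\Vect(X), \Delta^1}^{cat}$ --- namely $U \mapsto$ the groupoid of maps $E \to F$ of vector bundles over $U \times X$ with commuting--square isomorphisms between them --- so by the Yoneda lemma for stacks they are equivalent; and \cref{thm:arrow_moduli}(ii)--(iv) identifies $s$, $t$, $e$, $c$ with source, target, identity and composition, which are precisely the low--degree structure maps of the nerve $\mc{M}_{\Vect(X), \Delta^\bullet}^{cat}$. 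Transporting along these identifications gives the required equivalence of simplicial objects, compatible with all face and degeneracy maps.

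\emph{Main obstacle.} I expect the crux to be the Segal identification on the ``simplicial'' side, i.e.\ the assertion that $\mc{M}_{\Vect(X), \Delta^\bullet}$ really is the nerve of the internal category of \cref{thm:arrow_moduli}. This is the precise content of the slogan ``composing and inserting identities respects commutativity of diagrams'' used around \cref{defn:mod-st-quiv-bun}: it requires the full associativity and unitality of $c$ --- which in the $\infty$--categorical generality is only pinned down up to contractible choice, cf.\ the Warning after \cref{defn:mod-st-quiv-bun} --- and it is exactly what forces the two a priori different ``commutative $\Delta^n$--diagram'' stacks to coincide. A secondary technical point is to verify that the enriched mapping--stack interpretation of $\mc{M}_{\Vect(X), -}^{cat}$ above is the correct formalization of ``$1$--stacks as categories fibred in groupoids'', so that colimit--preservation holds on the nose rather than only after truncating $\underline{I}$ to its fundamental groupoid.
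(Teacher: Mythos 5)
You should first note a mismatch with the task of ``comparing to the paper's proof'': the paper does not prove this statement at all --- it is stated as a conjecture, supported only by the informal discussion of mapping prestacks in the surrounding subsection --- so your proposal has to stand on its own, and as written it does not yet. The most serious issue sits in your first step, where you quietly replace the statement. With the paper's literal definition $\mc{M}^{cat}_{\Vect(X), I} = \Map(\underline{I}, \mc{M}_{\Vect(X)})$ computed in $\St^1_{\CRing^{\heartsuit,\op}}$ (sheaves of groupoids), a map out of the constant sheaf on a simplicial set $I$ into a groupoid-valued sheaf factors through the fundamental groupoid of $I$; for $I=\Delta^1$ this would give $\mc{M}^{cat}_{\Delta^1}\simeq \mc{M}_{\Vect(X)}$, which is not $\mc{M}_{\Vect(X),1}$, whose $U$--points are arbitrary, generally non-invertible, bundle maps by \cref{lem:M1_points}. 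You substitute an enriched, category-valued mapping object so that $U$--points of $\mc{M}^{cat}_{\Delta^n}$ are $n$--simplices of the nerve of $\Vect(U\times X)$, and you defer the check that this matches the paper's definition to a ``secondary technical point''; but it is not secondary --- without it you are proving a re-interpreted statement, and reconciling the groupoid-valued stack $\mc{M}_{\Vect(X)}$ with the category-valued sheaf $\Vect$ (the paper's fibred-category discussion) is precisely where much of the conjecture's content lies.

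The reduction to $\Delta$ is fine in outline (both functors turn colimits of simplicial sets into limits of stacks, and $\sSet=\PSh(\Delta)$), but the heart of the argument --- the comparison over $\Delta$ --- is asserted rather than proved. The spine/Segal equivalence $\mc{M}_{\Vect(X),\Delta^n}\to[\simeq]\mc{M}_{\Vect(X),1}\times_{\mc{M}_{\Vect(X)}}\cdots\times_{\mc{M}_{\Vect(X)}}\mc{M}_{\Vect(X),1}$ requires unpacking the pullback in \cref{defn:mod-st-quiv-bun}'s precursor (the diagonal against all iterated composites $c_S$) and the coherent associativity and unitality of $c$, which \cref{thm:arrow_moduli}(v) only records as an internal-category structure verified pointwise, as well as compatibility with the maps $\mc{M}_\iota$, $\mc{M}_\sigma$ that the paper itself only sketches; you correctly flag this as the ``main obstacle'' but supply no argument for it. Likewise, ``same functor of points, hence equivalent by Yoneda'' for $\mc{M}_{\Vect(X),1}$ versus $\mc{M}^{cat}_{\Delta^1}$ needs an equivalence natural in $U$ (an actual morphism of stacks inducing the pointwise identifications, the description of morphisms you invoke coming only from the proof sketch of the unproven \cref{guess:symm-mon-moduli}), and matching $s,t,e,c$ on $U$--points does not by itself yield an equivalence of simplicial objects: one must produce the coherence data making the levelwise equivalences commute with all faces and degeneracies. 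So what you have is a sensible plan --- reduce to $\Delta$, then identify both restrictions as nerves of the internal category of \cref{thm:arrow_moduli} --- but with the decisive verifications left open it remains a strategy, not a proof of the conjecture.
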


\begin{rmk}
For another definition of the above concept,
we could consider the functor category
$\Fun\br{I, \unstr\br{\mc{M}_{\Vect(X)}}}$
but we need a Cartesian fibration from it to $\s{C}$ in order to consider
it as even a prestack. We could take the map
\[
\Fun\br{I, \unstr\br{\mc{M}_{\Vect(X)}}}
\to \Fun\br{I, \s{C}}
\to \s{C}
\]
where the first map is given by post-composition by the Cartesian fibration
$\unstr\br{\mc{M}_{\Vect(X)}} \to \s{C}$ and the second map is taking
either a colimit or a limit. However, one needs to show that this
will give a Cartesian fibration and will straighten to a sheaf, and then
show an appropriate algraicity in the concrete examples we have in mind.
We can then try to show an equivalence with the above definition.
\end{rmk}

While we did not pursue this perspective in this paper, it is worth noting
that this is a more general approach as it does not require the universal
vector bundle $\mc{E}_X$. Recall that the most crucial object in our constructions
was the vector bundle $\mc{E}_X^\vee \boxtimes \mc{E}_X$.
That is, for a general moduli problem
$\mc{M} : \mc{C}^\op \to \s{S}$ where there is no analogue of
$\mc{E}_X^\vee \boxtimes \mc{E}_X$, we can still define moduli stacks
$\mc{M}_{I}$ whose points are $I$--shaped diagrams of objects parametrized
by the original moduli problem as simply the mapping prestack
$\Map(\underline{I}, \mc{M})$, which will, at the very least, be a stack
as long as $\mc{M}$ is.
An interesting example of this would be
the moduli problem for elliptic curves.
With this setup, one may examine the conditions on $\mc{M}$ needed to make $\mc{M}_I$ algebraic or have other desirable properties.
We intend to pursue this more general approach for a wide range of moduli
problems in future work.

\subsection{Speculations around Homotopy Theory}

We end this section by discussing, in imprecise terms,
some potential links between moduli theory and homotopy theory facilitated
by the perspectives of this paper.
Homotopy theory, in the most elementary sense,
is concerned with studying mapping spaces between topological spaces.
That is, the sets of maps between topological spaces are not just discrete
sets and come equipped with their own topology. Said differently, if
we restrict attention to nice enough --- that is, compactly generated weakly
Hausdorff (CGWH) topological spaces --- then the category of such spaces
admit an enrichment in Kan complexes, which gives a homotopy theory that
is equivalent to the homotopy theory of simplicial sets. There are two ways of saying
this more precisely:
\begin{enumerate}
\item The category of CGWH spaces admits
a model structure Quillen equivalent to the Kan-Quillen model structure
of simplicial sets.
\item The simplicial or coherent nerve of the simplicial category of CGWH
spaces is equivalent as a quasicategory to the quasicategory of Kan complexes.
\end{enumerate}

Now, given a classical moduli problem valued in groupoids
$\mc{M} : \s{C}^\op \to \Grpd$, we can take the
moduli stacks $\mc{M}_{\Delta^n}$ for $n = 0, 1, 2, \dots$,
as defined in the previous subsection along with the natural maps
\[
\mc{M}_{f} : \mc{M}_{\Delta^n} = \Map(\Delta^n, \mc{M})
\to \Map(\Delta^m, \mc{M}) = \mc{M}_{\Delta^m}
\]
induced by maps $f : \Delta^m \to \Delta^n$.
This data assembles into a simplicial object
$\mc{M}_\bullet = \mc{M}_{\Delta^\bullet} : \Delta^\op \to \St_\s{C}^1$.

On the other hand, for any two objects of $\mc{M}$ given by maps
$a : U \to \mc{M}$ and $b : V \to \mc{M}$, inspired by \cite[\S 1.2.2]{HTT},
we can define a simplicial stack $\mc{M}^R(a, b)_\bullet$
whose $n$--th degree is defined by the pullback:
\[\begin{tikzcd}[row sep=huge]
\mc{M}^R(a, b)_n \ar[r] \ar[d] \ar[rd, phantom, "\lrcorner" very near start] &
\mc{M}_{\Delta^{n + 1}} \ar[d, "{(d_0, \dots, d_{n + 1})}"] \\
U \times V \ar[r, "{(a, \dots, a, b)}" below] & \mc{M}^{n + 1} \times \mc{M}
\end{tikzcd}\]
where the $d_i$ are the face maps corresponding to vertex inclusions
$\Delta^0 \to \Delta^n$.
Roughly speaking,
$\mc{M}^R(a, b)_0$ is to be thought of as the stack parametrizing
maps $a \to b$; $\mc{M}^R(a, b)_1$, as the stack parametrizing homotopies
$H : g \To h$ of maps $g, h : a \to b$;
$\mc{M}^R(a, b)_2$, as the stack parametrizing three homotopies
$H_{gh} : g \To h, H_{hr} : h \To r$ and $H_{gr} : g \To r$, and a higher homotopy
between $H_{hr} \circ H_{gh}$ and $H_{gr}$; and, so on.
Changing $(a, \dots, a, b)$ to $(a, b, \dots, b)$ above, we define
a simplicial stack $\mc{M}^L(a, b)_\bullet$. One can then try defining something
akin to an enriched category whose objects form not a set but a stack
$\mc{M}$ and for any two objects $a, b \in \mc{M}$, the simplicial stack of ``maps''
$a \to b$ is $\Map^R(a, b)_\bullet$ or $\Map^L(a, b)_\bullet$. Let us denote this
object, if it exists, as $\mc{M}^\Delta$.
The constructions $\mc{M}_\bullet$ and $\mc{M}^\Delta$ can be viewed as using
the data of a classical moduli problem to produce an infinity categorical moduli
problem that is different from taking its derived or spectral version.

These constructions are reminiscent of
the theory of simplicial sets and we can ask if any of the aspects of the homotopy
theory of simplicial sets or the theory of quasicategories can be translated to this
setting, and, if they can be related to available homotopy theories in algebraic geometry
such as \cite{MV99} or the more recent \cite{AI23}.

A few questions that one might ask in this direction are:

\begin{enumerate}
\item Taking $\mc{M} = \mc{M}_{\Vect(X)}$ and two objects
$a, b \in \mc{M}_{\Vect(X)}(U)$ corresponding to two vector bundles $E_a, E_b$ over
$U \times X$, how do the mapping stacks
$\mc{M}^R(a, b)$ compare to the Hom stack $\HHom(E_a, E_b)$ seen as an object
in the quasicategory of motivic spaces of \cite{MV99} or that of motivic spectra in
\cite{AI23}? It is likely that in the Morel-Voevodsky quasicategory,
these mapping stacks will be contractible but not so in the quasicategory of
non-$\bb{A}^1$-invraiant motivic spectra.

\item Taking $\mc{M} = \mc{M}_{\Vect(X)}$ again, we can consider the
simplicial object $\mc{M}_\bullet : \Delta^\op \to \St^1_{\s{C}}$ composed
with the inclusion $\St_\s{C}^1 \hto \St_\s{C}$ followed by one of the localization
functors of Morel-Voevodsky or Annala-Iwasa. In this situation, we have a simplicial
motivic space or a simplicial motivic spectrum and we can ask: how do the
maps relate the homotopical properties of the individual levels of the simplicial
object with one another?

\item We can ask the same questions taking $\mc{M}$ to be some moduli stack of
elliptic curves or of Higgs bundles, and so on.
\end{enumerate}

\vspace{20pt}

\printbibliography

\vspace{150pt}
~~~\\
~~~\\

\end{document}